\newcommand{\lsp}{\operatorname{span}}
\newcommand{\clsp}{\overline{\lsp}}
\newcommand{\Toeplitz}[1]{\text{$\mathcal{T}C ^\ast(#1)$}}
\newcommand{\Kk}{\mathcal{K}}
\newcommand{\supp}{\operatorname{supp} }
\newcommand{\struc}{\operatorname{Struc}}
\newcommand\inner[2]{\left\langle #1, #2 \right\rangle}
\newtheorem{theorem}{Theorem}[section]
\newtheorem{lemma}[theorem]{Lemma}
\newtheorem{proposition}[theorem]{Proposition}
\newtheorem{corollary}[theorem]{Corollary}
\theoremstyle{definition}
\newtheorem{definition}[theorem]{Definition}
\theoremstyle{remark}
\newtheorem{remark}[theorem]{Remark}
\numberwithin{equation}{section}
\begin{document}

\title[Reconstruction of topological graphs and bimodules]{Reconstruction of topological graphs and their Hilbert bimodules}




\author{Rodrigo Frausino}
\address{R. Frausino, A.C.S. Ng, A. Sims\\ School of Mathematics and Applied Statistics\\ University of Wollongong\\ NSW 2522 AUSTRALIA}
\email{rsf934@uowmail.edu.au}

\author{Abraham C.S. Ng}
\email{abrahamn@uow.edu.au}

\author{Aidan Sims}
\email{asims@uow.edu.au}

\subjclass[2020]{Primary 46L05}

\date{\today}

\thanks{This research was supported by Australian Research Council grants DP180100595 and DP200100155. We thank Adam Dor-On and the anonymous referee for drawing our attention to the work of Davidson--Katsoulis and Davidson--Roydor, and we thank Ken Davidson and Elias Katsoulis for helpful discussions of related literature.}

\keywords{Topological graph; Hilbert bimodule; $C^*$-correspondence; graph $C^*$-algebra; vector bundle; KMS state}


\begin{abstract}
We show that the Hilbert bimodule associated to a compact topological graph can be recovered from the $C^*$-algebraic triple consisting of the Toeplitz algebra of the graph, its gauge action and the commutative subalgebra of functions on the vertex space of the graph. We discuss connections with work of Davidson--Katsoulis and of Davidson--Roydor on local conjugacy of topological graphs and isomorphism of their tensor algebras. In particular, we give a direct proof that a compact topological graph can be recovered up to local conjugacy from its Hilbert bimodule, and present an example of nonisomorphic locally conjugate compact topological graphs with isomorphic Hilbert bimodules. We also give an elementary proof that for compact topological graphs with totally disconnected vertex space the notions of local conjugacy, Hilbert bimodule isomorphism, isomorphism of $C^*$-algebraic triples, and isomorphism all coincide.
\end{abstract}

\maketitle
\section{Introduction}
Given a class of mathematical objects, if for each object $X$ there is an associated structure $\struc(X)$, what kind of relationship does an isomorphism $\struc(X)\cong \struc(Y)$ impose between $X$ and $Y$? The Gelfand--Naimark Theorem gives us a classical example of this question: For $X$ and $Y$ locally compact Hausdorff spaces, an isomorphism $C_0(X)\cong C_0(Y)$ induces an isomorphism $X\cong Y$. Another similar result of Gelfand and Kolmogorov \cite{gelfand1939rings} deals with algebra homomorphisms and real-valued functions. A compilation of results of this kind can be found in \cite{garrido2002variations}. 

This question has recently been of significant interest in the context of graph algebras due to the work of Eilers and his collaborators (see for example \cite{dor2020classification}). In particular, in \cite{sims2020reconstructing}, \cite{BruceTak} and \cite{dor2020classification}, it was shown that a directed graph $E$ can be completely recovered from its Toeplitz algebra, its canonical gauge action, and its abelian coefficient subalgebra. In \cite{sims2020reconstructing} this was achieved for finite graphs using KMS theory; in \cite{BruceTak} this was extended to arbitrary discrete graphs using ground states, while in \cite{dor2020classification} they used nonselfadjoint operator-algebra theory.

The main purpose of this paper is to generalize the results of \cite{sims2020reconstructing} to totally disconnected compact topological graphs, but our approach yields interesting results, and questions, for more general compact topological graphs along the way. Drawing inspiration from \cite{sims2020reconstructing}, given a compact topological graph $E$, we use the KMS structure on its Toeplitz algebra, together with its coefficient subalgebra, to recover the corresponding graph bimodule. At first sight it may seem that our result can be recovered from \cite[Proposition 4.6 or Corollary 4.7]{dor2020classification}, but their results require that the coefficient algebra $A$ be a subalgebra of compact operators in a Hilbert space and hence must be of the form $\bigoplus_{i\in I}\Kk(H_i)$ for Hilbert spaces $(H_i)_{i\in I}$ \cite[Theorem 1.4.5]{arveson2012invitation}. If $A$ is also commutative, then each $H_i$ must be 1-dimensional and if it is unital as well, then $|I|<\infty$. Hence the intersection between our hypothesis and those of \cite{dor2020classification} yields the class of finite discrete graphs as in \cite{sims2020reconstructing}. 

In the final two sections of the paper, we consider the extent to which a topological graph $E$ can be recovered from its bimodule, and hence from the triple consisting of its Toeplitz $C^*$-algebra, gauge action and coefficient algebra. After our paper appeared on the arXiv, we discovered (we thank both Adam Dor-On and the anonymous referee for drawing our attention to the fact) that these questions were considered earlier by Davidson--Katsoulis \cite{DavKat} for local homeomorphisms, and by Davidson--Roydor \cite{DavRoy} for topological graphs. We present alternative proofs and examples for some of these results, but we emphasise that the definition of local conjugacy and our results relating isomorphism of Hilbert modules of topological graphs to local conjugacy of the graphs themselves both go back to the work of Davidson--Roydor, and before that to the work of Davidson--Katsoulis. Indeed, there is a substantial body of work on the relationship between isomorphism of graphs and of associated tensor algebras \cite{KatKri04, MuhSol98, MuhSol99, MuhSol00, Sol04}, and on the relationship between isomorphism, or local conjugacy, of multivariable dynamical systems and isomorphism of the associated Hilbert bimodules \cite{KakKat12, KakKat14, Kat18, KatRam22}.

In the preliminaries we recall some details about our main tool for studying the Toeplitz algebra of a compact topological graph --- the KMS-states for its gauge action. We also introduce Hilbert modules and the relationship between Hilbert bimodules over commutative $C^*$-algebras and Hilbert bundles.

In Section~\ref{section.A reconstruction result} we prove our first main theorem, Theorem~\ref{theorem.isoKMScompacttopgraph}: let $E$ and $F$ be compact topological graphs, $\gamma^E$ and $\gamma^F$ the gauge actions on the corresponding Toeplitz algebras $\Toeplitz{E}$ and $\Toeplitz{F}$, and $M_E$ and $M_F$ the coefficient algebras. We say that the triples $(\Toeplitz{E},\gamma^E,M_E)$ and $(\Toeplitz{F},\gamma^F,M_F)$ are isomorphic if there is a $\ast$-isomomorphism $\theta:\Toeplitz{E}\to \Toeplitz{F}$ that intertwines $\gamma^E$ and $\gamma^F$ and $\theta(M_E) = M_F$. Our theorem says that $(\Toeplitz{E},\gamma^E,M_E)$ and $(\Toeplitz{F},\gamma^F,M_F)$ are isomorphic if and only if the underlying graph bimodules $X(E)$ and $X(F)$ are isomorphic. We do this by proving that we can explicitly reconstruct $X(E)$ from $(\Toeplitz{E},\gamma^E,M_E)$, and make explicit the sense in which an isomorphism of Toeplitz triples induces an isomorphism of bimodules.

In Section~\ref{section.local} we show that we can do more than just reconstruct $X(E)$. We recall the notion of local conjugacy of topological graphs (\cite[Definition~4.3]{DavRoy}). We deduce that, for compact topological graphs, isomorphism of graph bimodules implies local conjugacy of graphs (this can be deduced from \cite[Theorem~4.5]{DavRoy}, but we give a direct proof). Next, we prove  Corollary~\ref{corollary.isomorphismoftotallydisconnectedtopgraph}: for compact topological graphs with totally disconnected vertex spaces, $(\Toeplitz{E},\gamma^E,M_E)$ and $(\Toeplitz{F},\gamma^F,M_F)$ are isomorphic if and only if $E$ and $F$ are isomorphic topological graphs (again, this follows from \cite[Theorem~5.5]{DavRoy}, but we give an elementary proof for zero-dimensional graphs).

In Section~\ref{section.example} we give an example that shows that in general, we cannot recover a compact topological graph from $(\Toeplitz{E},\gamma^E,M_E)$: we exhibit nonisomorphic topological graphs with isomorphic graph correspondences. Examples of nonisomorphic but locally conjugate topological graphs whose vertex spaces have convering dimension~1 also appear in \cite[Example~3.18]{DavKat}, and it follows from \cite[Theorem~5.5]{DavRoy} that their Hilbert modules are also isomorphic. However, our example is explicit and we describe a concrete isomorphism of the resulting Hilbert modules.

We finish off in Section~\ref{sec.epilogue} by giving a characterisation, in terms of cohomological data, of the vector-bundle structure associated to the right-Hilbert modules of compact topological graphs. Specifically, we demonstrate that they are precisely the vector bundles admitting local trivialisations whose transition functions take values in the permutation matrices. This is closely related to the characterisation of Kaliszewski, Patani and Quigg \cite{KaliPataQuigg} in terms of continuous choices of orthonormal basis. We indicate in a closing remark how this relates to the question of which pairs of locally conjugate topological graphs have isomorphic Hilbert modules.

\section{Preliminaries}
\subsection{KMS-states}
We first recall the Kubo--Martin--Schwinger states (KMS-states for short) of a $C^\ast$-dynamical system . See \cite{bratteli1981operatorvol1,bratteli1981operatorvol2} for details. By a $C^*$-dynamical system, we mean a strongly continuous action $\tau$ of $\mathbb{R}$ on a $C^\ast$-algebra $A$ by automorphisms; we call $\tau$ a \emph{dynamics} on $A$.

Let $\tau$ be a dynamics on a $C^\ast$-algebra $A$. An element $a \in A$ is \textit{analytic} for $\tau$ if the function $t \mapsto \tau_t(a)$ extends to an analytic function $z \mapsto \tau_z(a)$ from $\mathbb{C}$ to $A$. If it exists, this extension is unique.

Let $(A,\tau)$ be $C^\ast$-dynamical system, $\varphi$ a state on $A$ and $\beta \in \mathbb{R}$. We say $\varphi$ is a \textit{$\tau$-$\text{KMS}_\beta$-state} if
\begin{align*}
\varphi(a\tau_{i\beta}(b)) = \varphi(ba)
\end{align*}
for all analytic elements $a, b\in A$. When $\tau$ is implicit, we just say that $\varphi$ is a $\text{KMS}_\beta$-state. The set of KMS$_\beta$-states is convex and weak$^\ast$-compact (\cite[Theorem 5.3.30]{bratteli1981operatorvol2}). An extremal point of this set is called an \textit{extremal} KMS$_\beta$-state.

By a KMS$_\infty$ state, we will mean a weak$^*$-limit of a sequence $(\phi_n)_{n=1}^\infty$ of states such that each $\phi_n$ is a KMS$_{\beta_n}$-state for a sequence $(\beta_n)_{n=1}^\infty$ satisfying $\beta_n \to \infty$.

\subsection{Hilbert modules}
There are plenty of references that explore the properties of Hilbert modules, for example \cite{lance1995hilbert}. We rely mainly on \cite{raeburn1998morita} for a slightly more recent approach. Let $A$ be a $C^*$-algebra. A \emph{right Hilbert $A$-module} is a right $A$-module $V$ equipped
with a map $\langle \cdot, \cdot \rangle_A : V \times V \to A$, which is linear in the second variable and such that for $x,y \in V$, $a \in A$,
\begin{itemize}
    \item $\langle x,x \rangle_A \geq 0$, with equality only when $x = 0$;
    \item \label{A-linear in the second variable} $\langle x,y \cdot a\rangle_A=\langle x,y\rangle_Aa$;
    \item\label{it:adjoint of ip}$\langle x,y\rangle_A = \langle y,x\rangle_A^*$; and
    \item $X$ is complete in the norm defined
        by $\|x\|_A ^2 = \|\langle x, x\rangle_A\|$.
\end{itemize}

A map  $T:V \to V$  is an \emph{adjointable operator} if there exists $T^*:V \to V$ called the \emph{adjoint} such that $\langle
 T^*y,x\rangle_A=\langle y,Tx\rangle_A$, for all $x$, $y \in V$. The adjoint $T^*$ is unique, and
 $T$ is automatically a bounded, linear $A$--module homomorphism. The set $\mathcal{L}(V)$ of adjointable operators on $V$
 is a $C^\ast$-algebra.

A \emph{$C^\ast$-correspondence} over $A$ (or right--Hilbert $A$-bimodule) is a right Hilbert $A$-module $X$ together with a left action of $A$ by adjointable operators on $X$ which is given by a $\ast$-homomorphism $\phi\colon A\to \mathcal{L}(X)$, in the sense that the left action $a\cdot x$ is given by $\phi(a)x$. This implies the familiar-looking formula 
 \begin{equation}\label{eq:adjointable left action}
     \langle a \cdot y,x \rangle_A=\langle y,a^\ast\cdot x\rangle_A \quad\text{ for all $a \in A$, $x$, $y \in X$.}
 \end{equation}
 
 Every $C^*$-algebra $A$ can be regarded as a $C^*$-correspondence over itself with actions given by multiplication, and inner product $\langle a, b\rangle_A = a^*b$. We denote this $C^*$-correspondence by ${_A A_A}$.

The internal tensor product $X \otimes_A Y$ of $C^*$-correspondences over $A$ is the completion of the quotient of the algebraic tensor product $X\odot Y$ by the submodule generated by differences of the form  $a \odot y - x \odot a\cdot y$, with respect to the inner product satisfying $\langle x \odot y, x' \odot y'\rangle_A = \langle y, \langle x, x'\rangle_A \cdot y'\rangle_A$ (see \cite[Chapter~8]{raeburn2005graph}). For $n \ge 1$, we write
\[
X^{\otimes n} = \underbrace{X \otimes_A X \otimes_A \cdots \otimes_A X}_{\text{$n$ terms}}.
\]
By convention, $X^{\otimes 0} = {_A A_A}$.

Let $A$ be a $C^*$-algebra and let $X$ be a $C^*$-correspondence over $A$. A \emph{Toeplitz 
representation} of $X$ in a $C^*$-algebra $B$ is a pair $(\psi,\pi)$ such that $\psi:X \to {_B B_B}$ and $\pi:A \to B$ together constitute a Hilbert bimodule homomorphism. 

Given a Toeplitz representation $(\psi,\pi)$ of $X$ in $B$, for each $n\in \mathbb{N}$ there exists \cite[Proposition~1.8]{fowler1999toeplitz} a Toeplitz representation $(\psi^{\otimes n},\pi)$ of $X^{\otimes n}$ in $B$ such that for all $x_1,\ldots,x_n\in X$,
 \[\psi^{\otimes n}(x_1\otimes x_2\otimes...\otimes x_n)=\psi(x_1)\psi(x_2)...\psi(x_n).\]
 
 Given a $C^*$-correspondence $X$ over $A$, there exists a Toeplitz representation
 $(\iota_X,\iota_A)$ of $X$ in a $C^\ast$-algebra $\mathcal{T}_X$ that is \emph{universal} for Toeplitz representations \cite[Proposition 1.3]{fowler1999toeplitz}, \cite[Theorem 3.4]{pimsner12class}. This means that for another Toeplitz representation $(\psi,\pi)$ of $X$ in a $C^\ast$-algebra $B$, there exists a homomorphism denoted $\psi\times \pi:\mathcal{T}_X\to B$, such that $(\psi\times \pi)\circ \iota_X =\psi$ and $(\psi\times \pi)\circ \iota_A=\pi$. This $\mathcal{T}_X$ is called the \textit{Toeplitz algebra} of the correspondence $X$. By, \cite[Proposition 8.9]{raeburn2005graph},
 \[\mathcal{T}_X=\clsp\{\iota_X^{\otimes m}(\xi)\iota_X^{\otimes n}(\eta)^\ast\mid m,n\geq 0, \xi\in X^{\otimes m}, \eta\in X^{\otimes n}\}. \]

The $C^*$-algebra $\iota_A(A)$ (or  $A$) is called the \textit{coefficient algebra} of $\mathcal{T}_X$.
By \cite[Proposition 1.3]{fowler1999toeplitz} there exists a strongly continuous action $\gamma^X:\mathbb{T}\to \operatorname{Aut}(\mathcal{T}_X)$ such that for $z\in\mathbb{T}$, $\gamma^X_z(\iota_{A}(a))=\iota_{A}(a)$ and $\gamma^X_z(\iota_{X}(x))=z\iota_{X}(x)$ for $a\in A$ and $x\in X$, called the \textit{gauge action}. The dynamics that we will consider is the action $t\mapsto \gamma^X_{e^{it}}$.

    For $n \in \mathbb{Z}$, the $n^{\text{th}}$ spectral subspace $(\mathcal{T}_X)_n$ of $\mathcal{T}_X$ for the action $\gamma$ is the space $\{a \in \mathcal{T}_X : \gamma_z(a) = z^n a\text{ for all }z \in \mathbb{T}\}$. One can check that 
\[
(\mathcal{T}_X)_n = \clsp\{\iota_X^{\otimes p}(\xi)\iota_X^{\otimes q}(\eta)^\ast\mid p,q\geq 0, p-q = n, \xi\in X^{\otimes p}, \eta\in X^{\otimes q}\}.
\]
We will make frequent use of the first spectral subspace $(\mathcal{T}_X)_1$ later.

Let $A,B$ be $C^\ast$-algebras and $X,Y$ be $C^\ast$-correspondences over $A$ and $B$, respectively. Then, a map $\theta:\mathcal{T}_X\to \mathcal{T}_Y$ will be called an \textit{isomorphism of triples} if it is an isomorphism of $C^\ast$-algebras such that $\theta\circ \gamma^X_z=\gamma^Y_z\circ \theta$ for all $z\in \mathbb{T}$ and carries $\iota_A(A)$ onto $\iota_B(B)$.
\subsection{Topological graphs and modules}\label{subsec:topgraphmod}

 A \textit{topological graph} is a quadruple $E=(E^0,E^1,r,s)$ where the \textit{vertex set} $E^0$ and the \textit{edge set} $E^1$ are locally compact Hausdorff spaces, the \textit{range map} $r\colon E^1\to E^0$ is a continuous function and the \textit{source map} $s\colon E^1\to E^0$ is a local homeomorphism. A \textit{path} in $E$ is a finite sequence of edges $\mu = \mu_1\mu_2 \dots \mu_n$ such that $s(\mu_{i}) = r(\mu_{i+1})$ for $1\le i \le n-1$ or a vertex $v\in E^0$. For such a path, $|\mu| = n$ is called the \textit{length} of $\mu$ and by convention $|v| = 0$ for all $v\in E^0$. For $n\ge 1$, the set of paths of length $n$ is denoted $E^n$ and the vertex set $E^0$ is considered the set of paths of length $0$. Define $E^\ast := \bigcup_{n\in \mathbb{N}} E^n$ to be the set of all paths. For $\mu \in E^n$, let $r(\mu) := r(\mu_1)$ and $s(\mu):= s(\mu_n)$. For $v\in E^0$, we define $E^\ast v = \{\mu \in E^\ast : s(\mu) = v\}$ and $vE^\ast=\{\mu \in E^\ast : r(\mu) = v\}$, with analogous notation we can define $vE^n$ and $E^n v$. More generally, for a subset $U \subseteq E^0$ we write $E^1 U$ for $s^{-1}(U)$.
\begin{lemma}\label{lemma:s-sectionslocalconstant}
Let $E$ be a compact topological graph. Then the function $v \mapsto |E^1 v|$ is locally constant. For each $v\in E^0$, there exist an open neighborhood $W$ of $v$ and disjoint open s-sections $(Z_e)_{e\in E^1 v}$ such that $s(Z_e)=W$ for all $e\in E^1 v$ and $E^1 W = \bigsqcup_{e\in E^1 v} Z_e$.
\end{lemma}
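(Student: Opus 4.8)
The plan is to exploit the interplay between compactness of $E^1$ and the fact that $s$ is a local homeomorphism. First I would fix $v \in E^0$ and observe that the fibre $E^1 v = s^{-1}(v)$ is finite: it is a closed, hence compact, subset of the compact space $E^1$, and it is discrete because $s$ is a local homeomorphism (each of its points has a neighborhood on which $s$ is injective, so each point of the fibre is isolated in it). A compact discrete space is finite, so I may write $E^1 v = \{e_1, \dots, e_m\}$.

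Next I would build disjoint s-sections around these points. For each $i$, the locality of $s$ gives an open set $U_i \ni e_i$ on which $s$ restricts to a homeomorphism onto the open set $s(U_i) \ni v$; since $E^1$ is Hausdorff and the $e_i$ are distinct, after shrinking I may take the $U_i$ pairwise disjoint. The key step --- and the one I expect to be the main obstacle --- is to rule out ``extra'' edges whose source lies near $v$ but which are not captured by the $U_i$. To handle this I would set $K = E^1 \setminus \bigcup_i U_i$, a closed and therefore compact subset of $E^1$ with $v \notin s(K)$ (since $s^{-1}(v) \subseteq \bigcup_i U_i$). Continuity of $s$ makes $s(K)$ compact, hence closed in the Hausdorff space $E^0$, so $E^0 \setminus s(K)$ is an open neighborhood of $v$.

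I would then define $W = \bigl(\bigcap_{i=1}^m s(U_i)\bigr) \cap (E^0 \setminus s(K))$, which is an open neighborhood of $v$, and put $Z_{e_i} = U_i \cap s^{-1}(W)$. Each $Z_{e_i}$ is an open s-section with $s(Z_{e_i}) = W$ (because $s|_{U_i}$ is a homeomorphism onto $s(U_i) \supseteq W$, so $Z_{e_i} = (s|_{U_i})^{-1}(W)$), and the $Z_{e_i}$ are pairwise disjoint because the $U_i$ are. To verify $E^1 W = s^{-1}(W) = \bigsqcup_i Z_{e_i}$, the inclusion $\supseteq$ is immediate by construction; conversely any $f$ with $s(f) \in W$ satisfies $s(f) \notin s(K)$, which forces $f \notin K$, i.e. $f \in U_i$ for some $i$, whence $f \in U_i \cap s^{-1}(W) = Z_{e_i}$.

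Finally, the local constancy of $v \mapsto |E^1 v|$ falls out of this decomposition, so the first assertion follows from the second rather than needing a separate argument: for every $w \in W$, each restriction $s|_{Z_{e_i}}$ is a bijection onto $W$, so $s^{-1}(w) \cap Z_{e_i}$ is a single point and $|E^1 w| = m = |E^1 v|$. Thus the function is constant on the neighborhood $W$ of $v$, and since $v$ was arbitrary it is locally constant.
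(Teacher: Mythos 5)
Your proof is correct, and it takes a genuinely different route from the paper at the crucial step. Both arguments begin identically: the fibre $E^1 v$ is finite because it is compact and discrete, and one surrounds its points with pairwise disjoint open $s$-sections $U_1,\dots,U_m$. The divergence is in how the ``extra edges'' near $v$ are excluded. The paper first proves local constancy of $v\mapsto|E^1v|$ by contradiction: assuming no suitable neighborhood exists, it extracts a sequence of offending edges $e_i$ outside $\bigcup_i U_i$, passes to a convergent subsequence in the compact space $E^1$, and shows the limit must lie in $E^1v$ yet cannot be approximated from outside the $U_i$; the decomposition $E^1W=\bigsqcup Z_e$ is then deduced afterwards. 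You instead observe directly that $K=E^1\setminus\bigcup_i U_i$ is compact, that $v\notin s(K)$, and hence that $E^0\setminus s(K)$ is an open neighborhood of $v$; cutting $\bigcap_i s(U_i)$ down by this set yields $W$ with $E^1W=\bigsqcup_i Z_{e_i}$ at once, and local constancy falls out as a corollary of the decomposition rather than being a prerequisite for it. Your version buys two things: it is constructive rather than by contradiction, and it avoids the sequential-compactness/subsequence machinery, so it works verbatim for compact Hausdorff spaces that are not first countable (where the paper's sequence argument would strictly require nets). The paper's version, on the other hand, isolates the local constancy statement as a self-contained fact before touching the bundle-like decomposition. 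Either way, your argument is complete and fills the same role in the rest of the paper.
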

\begin{proof}
Fix $v\in E^0$. Since $s$ is local homeomorphism, $E^1 v$ is a discrete subspace of the compact space $E^1$ and thus finite. For each $e\in E^1 v$ there exists a neighborhood $V_e$ of $e$ on which $s$ is a homeomorphism. Since $E^1$ is Hausdorff and $E^1v$ is finite, by shrinking the $V_e$, we may assume that $V_e$, $e\in E^1v$ are pairwise disjoint. Since $s$ is a local homeomorphism, it is an open map, and so $W_0 := \bigcap_{e\in E^1 v}s(V_e)$ is open. For $w \in W_0$ we have $|E^1 w| \ge |\bigcup_{e \in E^1 v} V_e \cap E^1 w| = |E^1 v|$. For the first statement, it suffices to show that there is an open subset $W \subseteq W_0$ containing $v$ such that $|E^1 w| \le |E^1v|$ for all $w \in W$. We suppose otherwise and derive a contradiction. Then there is a sequence $(w_i)$ in $W_0$ converging to $v$ such that $|E^1 w_i| > |E^1 v|$ for all $i$. It follows that for each $i$ there exists $e_i \in E^1 w_i \setminus \bigcup_{e \in E^1 v} V_e$. Since $E^1$ is compact, by passing to a subsequence, we may assume that the sequence $(e_i)_i$ is convergent, say to $e_\infty \in E^1$. Continuity of the source map forces $s(e_\infty) = \lim_i s(e_i) = \lim_i w_i = v$ so that $e_\infty \in E^1 v$. So by definition we have $e_\infty \in V_{e_\infty}$ but $e_i\notin V_{e_\infty}$ for all $i$, contradicting that $e_i\to e_\infty$.

For the second statement, for each $e\in E^1 v$ take $V_e$ and $W$ as before, and let $Z_e = V_e\cap E^1 W$. Then $s:Z_e\to W$ is a homeomorphism for each $e\in E^1 v$ and $E^1 W=\bigsqcup_{e\in E^1 v}Z_e$, since, otherwise there would be an edge $e'\in E^1W$ such that $e'\notin V_e$ for all $e\in E^1 v$. This would lead to $|E^1 s(e')|>|E^1 v|$ and $s(e')\in W$, a contradiction.
\end{proof}

\begin{remark}\label{rem:maxexist}
Lemma \ref{lemma:s-sectionslocalconstant} implies that the map $v\mapsto |E^1v|$ is continuous and in particular, $\max_{v\in E^0}|E^1v|$ exists and is finite as $E^0$ is compact. By induction, $\max_{v\in E^0}|E^nv|$ exists and is finite for all $n$.
\end{remark}

What follows comes from \cite{katsura2004class}. Let $E$ be a topological graph. There are a right action of
$C_0(E^0)$ on $C_c(E^1)$ and a $C_0(E^0)$-valued inner product on $C_c(E^1)$ such that
\[(x\cdot \alpha)(e)= x(e)\alpha(s(e))\quad\text{and}\quad \inner{x}{y}_{C_0(E^0)}(v)= \sum_{s(e)=v}\overline{x(e)}y(e)\]
for $x,y \in C_c(E^1)$ and $\alpha\in C_0(E^0)$. If $E^1 v=\emptyset$, our convention is that the sum is equal to $0$. The completion $X(E)$ of $C_c(E^1)$ in the norm $\|x\|^2 = \|\langle x,x\rangle\|_{C_0(E^0)}$ is a Hilbert $A$-module and is equal to
\begin{equation}\label{eq:defX(E)graphcorrespondence}
    \{x\in C(E^1): \inner{x}{x}\in C_0(E^0)\}.
\end{equation}
 The formula
\[(\alpha\cdot x)(e)\coloneqq \alpha(r(e))x(e)\]
defines an action of $C_0(E^0)$ by adjointable operators on $X(E)$, so that $X(E)$ becomes a $C^*$-correspondence over $C_0(E^0)$. We call this the \emph{graph correspondence} associated to $E$. The Toeplitz algebra of the topological graph $E$ is then $\Toeplitz{E}\coloneqq \mathcal{T}_{X(E)}$.

There is a categorical equivalence between Hilbert $C(X)$-modules, for $X$ a compact Hausdorff space and Hilbert bundles over $X$ \cite{dixmier1963champs,dupre1983banach,takahashi1979duality}. Following \cite{dupre1983banach}, by a Hilbert bundle over $X$ we mean a triple $(p,E,X)$ where $p:E\to X$ is an open surjection, $E$ and $X$ are topological spaces, together with operations and inner products making each fiber $E_x=p^{-1}(x)$ a Hilbert space that satisfies certain compatibility conditions \cite[Definition~13.4]{fell1988representations}. We describe briefly how to construct the \textit{canonical Hilbert bundle} $\mathcal{E}_V$ of a Hilbert $C(X)$-module $V$. For each $x\in X$, let
\[
I_x \coloneqq \{f\in C(X): f(x)=0\}\qquad\text{and}\qquad
J_x \coloneqq V I_x = \{v\cdot f: v\in V,\,f\in I_x \}.
\]
In the rest of this section we will drop the subscript on the inner product. By the Hewitt--Cohen factorization theorem  \cite[Proposition~2.31]{raeburn1998morita} $J_x$ is a closed submodule of $V$ and by \cite[Lemma~3.32]{raeburn1998morita} we can write $J_x = \{v\in V: \inner{v}{v}\in I_x\}$. Since $J_x$ is a closed submodule, we may take the quotient $V/J_x$, which is a Hilbert $C(X)/I_x$-module. As $C(X)/I_x \cong \mathbb{C}$, the quotient $V/J_x$ is a Hilbert space. Let $\pi_x:V\to V/J_x$ be the quotient map and let
\[\textstyle 
    E\coloneqq \bigsqcup_{x\in X}V/J_x.
\]
Define $p:E\to X$ by $p(v+J_x)=x$ for each $x\in X$. By \cite[Proposition~3.25]{raeburn1998morita}, for each $v\in V$ and $x\in X$ we have $\|\pi_x(v)\|^2=\inner{v}{v}(x)$ and hence, the map $x\mapsto \|\pi_x(v)\|$ is continuous. For $v \in V$, define $\hat{v} : X \to E$ by $\hat{v}(x) = \pi_x(v)$. Let $\Gamma := \{\hat{v} : v \in V\}$. Then $\Gamma$ is a complex linear space of cross-sections and for each $x\in X$, the set $\{f(x):f\in \Gamma\} = \{\pi_x(v) : v \in V\}$ is equal to $V/J_x$. Hence, by \cite[Theorem~II.13.18]{fell1988representations} there is a unique topology on $E$ making $\mathcal{E}_V=(p,E,X)$ into a Hilbert bundle and all elements of $\Gamma$ continuous cross-sections. The map $v \mapsto \hat{v}$ is an isomorphism between the Hilbert $C(X)$-modules $V$ and $\Gamma$. We will be interested only in compact topological graphs, so the construction above can be applied to the graph correspondences in this paper.

\section{Reconstruction of topological--graph bimodules}\label{section.A reconstruction result}

From now on, $E$ denotes a compact topological graph. That is, $E^0$ and $E^1$ are compact, so $r$ and $s$ are proper maps. We use results from Hawkins' thesis \cite{hawkins2015applications}, which we reproduce here for the convenience of the reader. In the following, $\mathcal{M}(X)$ denotes the space of finite Borel measures on a compact Hausdorff space $X$ and $\mathcal{M}^1(X) \subset \mathcal{M}(X)$ denotes the space of Borel probability measures on $X$. By Remark \ref{rem:maxexist}, we can also define \[\rho(A_E)\coloneqq \lim_{n\to \infty} \max_{v\in E^0}\vert E^n v\vert ^{1/n}<\infty,\] which we call the \textit{spectral radius} associated with the compact topological graph $E$ (the notation is motivated by the spectral radius of the adjacency matrix in the discrete case).

\begin{theorem}[{{\cite[Theorem 5.1.10]{hawkins2015applications}}}]\label{theorem.topgraphkmsepsilonstate}
Let $E$ be a compact topological graph, and fix $\beta>\log(\rho(A_E))$. For each $\epsilon\in \mathcal{M}(E^0)$ satisfying 
\begin{equation}\label{eq:normalising}
    \int_{E^0}\sum_{\mu\in E^\ast v}e^{-\beta |\mu|}\,d\epsilon(v)=1,
\end{equation}
there exists a KMS$_\beta$-state $\phi_\epsilon$ such that for $x\in X(E)^{\otimes k}$ and $y\in X(E)^{\otimes \ell}$
\begin{equation}
    \phi_\epsilon(\iota_{X(E)}^{\otimes k}(x)\iota_{X(E)}^{\otimes \ell}(y)^\ast)=\delta_{k,\ell}e^{-\beta k}\int_{E^0}\sum_{\mu\in E^\ast v}e^{-\beta|\mu|}\inner{y}{x}_{C(E^0)}(r(\mu))\,d\epsilon(v).
\end{equation}
\end{theorem}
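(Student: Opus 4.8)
The plan is to realise $\phi_\epsilon$ as a geometrically weighted sum of fibrewise traces on the Fock module, a description that makes positivity and the KMS condition transparent. Write $A=C(E^0)$ and $X=X(E)$. Since $E$ is compact, Lemma~\ref{lemma:s-sectionslocalconstant} and Remark~\ref{rem:maxexist} show that $v\mapsto|E^n v|$ is locally constant and bounded, so each $X^{\otimes n}\cong X(E^n)$ is a finitely generated projective $A$-module: a vector bundle over $E^0$ whose fibre over $v$ is $\ell^2(E^n v)$. Consequently, for each $\epsilon\in\mathcal M(E^0)$ the algebra $\mathcal L(X^{\otimes n})$ carries a finite positive trace
\[
\mathrm{tr}_n(T)=\int_{E^0}\operatorname{Tr}_{(X^{\otimes n})_v}(T_v)\,d\epsilon(v),
\]
the fibrewise matrix trace integrated against $\epsilon$; on rank-one operators $\mathrm{tr}_n(\theta_{\xi,\eta})=\int_{E^0}\inner{\eta}{\xi}(v)\,d\epsilon(v)$, and $\mathrm{tr}_n(\mathrm{id})=\int_{E^0}|E^n v|\,d\epsilon(v)$.

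Next I would let $\pi\colon\mathcal T_X\to\mathcal L(\mathcal F(X))$ be the Fock representation on $\mathcal F(X)=\bigoplus_{n\ge0}X^{\otimes n}$ (creation operators together with multiplication), with degree projections $P_n$, and define
\[
\phi_\epsilon(T)=\sum_{n\ge0}e^{-\beta n}\,\mathrm{tr}_n\bigl(P_n\pi(T)P_n\bigr).
\]
Convergence and boundedness follow from $|\mathrm{tr}_n(S)|\le\|S\|\,\mathrm{tr}_n(\mathrm{id})$, which gives $\sum_n e^{-\beta n}|\mathrm{tr}_n(P_n\pi(T)P_n)|\le\|T\|\sum_n e^{-\beta n}\int_{E^0}|E^n v|\,d\epsilon(v)$; the last series converges precisely because $\beta>\log\rho(A_E)$ (using $\int|E^n v|\,d\epsilon\le\epsilon(E^0)\max_v|E^n v|$ and Remark~\ref{rem:maxexist}), and by Tonelli it equals the left-hand side of the normalising condition~\eqref{eq:normalising}, so that $\phi_\epsilon(1)=1$. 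Thus $\phi_\epsilon$ is a well-defined contractive functional.

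For positivity I would use that diagonal compression is completely positive: for $T\in\mathcal T_X$ one has $P_n\pi(T^*T)P_n=\sum_m(P_m\pi(T)P_n)^*(P_m\pi(T)P_n)\ge0$, and since each $\mathrm{tr}_n$ is a positive trace and the weights $e^{-\beta n}$ are nonnegative, $\phi_\epsilon(T^*T)\ge0$; with $\phi_\epsilon(1)=1$ this makes $\phi_\epsilon$ a state. Matching $\phi_\epsilon$ to the claimed formula on the spanning elements $\iota_X^{\otimes k}(x)\iota_X^{\otimes\ell}(y)^*$ is then bookkeeping: such an element shifts Fock degree by $k-\ell$, so $P_n\pi(\cdot)P_n=0$ unless $k=\ell$ (this is the $\delta_{k,\ell}$), while for $k=\ell$ the compression is $\theta_{x,y}\otimes\mathrm{id}_{X^{\otimes(n-k)}}$, whose fibrewise trace reindexes, via the bijection $\lambda\leftrightarrow(\lambda_{[1,k]},\lambda_{[k+1,n]})$ between $E^n v$ and pairs consisting of a length-$k$ path and a tail $\nu\in E^{n-k}v$, to $\sum_{\nu\in E^{n-k}v}\inner{y}{x}(r(\nu))$; summing against $e^{-\beta n}=e^{-\beta k}e^{-\beta|\nu|}$ yields exactly the stated expression.

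Finally, for the KMS condition note that the spanning elements are analytic and that an element $b$ of the $\gamma$-spectral subspace of degree $d_b$ satisfies $\tau_{i\beta}(b)=e^{-\beta d_b}b$, so it suffices to verify $\phi_\epsilon(ba)=e^{\beta d_a}\phi_\epsilon(ab)$ for spanning $a,b$ with $d_a=-d_b$ (both sides vanish otherwise by the $\delta$). Because creation and annihilation preserve the source fibre, $\pi(a)$ and $\pi(b)$ are bundle maps over $E^0$, and fibrewise cyclicity of the matrix trace gives $\mathrm{tr}_n\bigl(P_n\pi(b)\pi(a)P_n\bigr)=\mathrm{tr}_{n+d_a}\bigl(P_{n+d_a}\pi(a)\pi(b)P_{n+d_a}\bigr)$; reindexing the series for $\phi_\epsilon(ba)$ by $m=n+d_a$ then pulls out the factor $e^{\beta d_a}$. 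I expect the main obstacle to be the first conceptual step, namely recognising that compactness of $E$ turns each $X^{\otimes n}$ into a finite-rank bundle and so supplies the finite traces $\mathrm{tr}_n$; once these are in hand, both positivity and the KMS identity reduce to positivity and traciality of $\mathrm{tr}_n$ weighted geometrically, and the only genuinely quantitative input is the convergence guaranteed by $\beta>\log\rho(A_E)$.
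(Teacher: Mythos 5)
Your construction is correct and is essentially the same as the one the paper relies on: the paper gives no proof of its own (it cites Hawkins' thesis), but the formula~\eqref{eq.kmsstateinanelement} it extracts from that proof is exactly your functional evaluated at the point mass $\epsilon=(N^\beta_v)^{-1}\delta_v$, since the fibre of the Fock module at $v$ is $\ell^2(E^\ast v)$ and your fibrewise trace of $P_n\pi(a)P_n$ is $\sum_{\mu\in E^n v}\inner{(\psi_v\times\pi_v)(a)e_\mu}{e_\mu}$. Your positivity, convergence and KMS arguments via traciality of $\mathrm{tr}_n$ and the degree shift are sound, so this is the same approach in bundle language.
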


For each $v \in E^0$ and each $\beta > \log(\rho(A_E))$, we write
\[
N^\beta_v := \sum_{\mu \in E^* v} e^{-\beta|\mu|}.
\]
For $v \in E^0$, the measure $\varepsilon_v := (N^\beta_v)^{-1} \delta_v$ satisfies~\eqref{eq:normalising}, so Theorem~\ref{theorem.topgraphkmsepsilonstate} supplies an associated KMS$_\beta$-state $\phi^\beta_v :=\phi^\beta_{\varepsilon_v}$. If $\beta$ is clear from context, we just write $\phi_v$ for $\phi^\beta_v$. The proof of Theorem~\ref{theorem.topgraphkmsepsilonstate} shows that for $a\in \Toeplitz{E}$,
\begin{equation}\label{eq.kmsstateinanelement}
    \phi^\beta_v(a)=\frac{\sum_{\mu\in E^\ast v}e^{-\beta|\mu|}\inner{\psi_v\times \pi_v(a)e_\mu}{e_\mu}}{N^\beta_v}.
\end{equation}

\begin{theorem}[{{\cite[Theorem 5.1.11]{hawkins2015applications}}}]\label{theorem.isoKMScompacttopgraph}
Let $E$ be a compact topological graph, and fix $\beta>\log(\rho(A_E))$.
Then there is an affine isomorphism of $\mathcal{M}^1(E^0)$ onto the set of KMS$_\beta$-states of \Toeplitz{E} that takes a measure $\Omega$ to the state $\varphi_\Omega$ given by
\[\varphi_\Omega (a)=\int_{E^0}\phi_v(a)\,d\Omega(v) \quad\text{for all $a \in \Toeplitz{E}$}.\]
\end{theorem}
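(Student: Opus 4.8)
The plan is to prove that $\Omega \mapsto \varphi_\Omega$ is an affine bijection by showing that a $\mathrm{KMS}_\beta$-state is completely determined by its restriction to the coefficient algebra, and then reconstructing $\Omega$ from that restriction; the one genuinely nontrivial point will be checking that the reconstructed measure is \emph{positive}. Throughout write $A = C(E^0)$ and $X = X(E)$. First I would record that $v \mapsto \phi_v$ is weak$^*$-continuous: by \eqref{eq.kmsstateinanelement} it suffices to check that $v \mapsto N^\beta_v$ and $v \mapsto \sum_{\mu \in E^\ast v} e^{-\beta|\mu|}\inner{y}{x}_{C(E^0)}(r(\mu))$ are continuous on the spanning elements $\iota_X^{\otimes k}(x)\iota_X^{\otimes \ell}(y)^\ast$, and both follow from Lemma~\ref{lemma:s-sectionslocalconstant} and Remark~\ref{rem:maxexist} (the local $s$-sections enumerate nearby paths continuously, and $N^\beta_v = \sum_n e^{-\beta n}|E^n v|$ converges uniformly since $\beta > \log\rho(A_E)$), together with uniform boundedness $\|\phi_v\| = 1$ to pass to all $a \in \Toeplitz{E}$. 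Then $\varphi_\Omega(a) = \int_{E^0}\phi_v(a)\,d\Omega(v)$ is well defined, is affine in $\Omega$, and is a $\mathrm{KMS}_\beta$-state because it is a barycenter of a probability measure supported in the weak$^*$-compact convex set of $\mathrm{KMS}_\beta$-states (equivalently, $\varphi_\Omega = \phi_\epsilon$ for $d\epsilon(v) = (N^\beta_v)^{-1}\,d\Omega(v)$, which satisfies \eqref{eq:normalising}, so Theorem~\ref{theorem.topgraphkmsepsilonstate} applies).

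The engine is that every $\mathrm{KMS}_\beta$-state $\varphi$ is determined by $\nu := \varphi \circ \iota_A$. Indeed, $\varphi$ is invariant under the dynamics, hence under $\gamma^E$, so $\varphi$ vanishes on $(\Toeplitz{E})_n$ for $n \ne 0$; and applying the KMS condition to $\iota_X(x)$ and $\iota_X(y)^\ast$, using $\tau_{i\beta}(\iota_X(x)) = e^{-\beta}\iota_X(x)$ and the Toeplitz relation $\iota_X(y)^\ast \iota_X(x) = \iota_A(\inner{y}{x})$, and iterating, gives $\varphi(\iota_X^{\otimes k}(x)\iota_X^{\otimes k}(y)^\ast) = e^{-\beta k}\nu(\inner{y}{x})$. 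Thus $\varphi \mapsto \nu$ is injective on $\mathrm{KMS}_\beta$-states. A direct computation identifies $\varphi_\Omega \circ \iota_A = T^\ast\Omega$, where $(Tf)(v) = (N^\beta_v)^{-1}\sum_{\mu \in E^\ast v}e^{-\beta|\mu|}f(r(\mu))$; writing $T = D^{-1}(I - e^{-\beta}S_1)^{-1}$ with $D$ multiplication by $N^\beta$ and $(S_1 f)(v) = \sum_{s(e)=v}f(r(e))$, the operator $T$ is invertible on $C(E^0)$ because $\|S_1^n\|_\infty = \max_v |E^n v|$ forces the spectral radius of $e^{-\beta}S_1$ to equal $e^{-\beta}\rho(A_E) < 1$. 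Hence $T^\ast$ is injective, and $\varphi_{\Omega_1} = \varphi_{\Omega_2}$ forces $\Omega_1 = \Omega_2$.

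For surjectivity, given a $\mathrm{KMS}_\beta$-state $\varphi$ I set $\nu = \varphi \circ \iota_A$ and $\Omega := (T^\ast)^{-1}\nu$; then $\varphi_\Omega \circ \iota_A = \nu = \varphi \circ \iota_A$, so the previous paragraph gives $\varphi = \varphi_\Omega$ \emph{provided} $\Omega \in \mathcal{M}^1(E^0)$. The total mass is $1$ because $T1 = 1$. The main obstacle is positivity of $\Omega$: since $T^{-1} = (I - e^{-\beta}S_1)D$ is not positivity-preserving, it cannot be read off the inversion formula. I would resolve this using that $X(E)$ is finitely generated projective over $C(E^0)$ (local triviality from Lemma~\ref{lemma:s-sectionslocalconstant} and the Hilbert-bundle picture of Section~\ref{subsec:topgraphmod}): choose a finite module frame $(x_e)$ with $\sum_e \Theta_{x_e,x_e} = \operatorname{id}_{X(E)}$, where $\Theta_{x,y}\xi = x \cdot \inner{y}{\xi}$, and set $Q = \sum_e \iota_X(x_e)\iota_X(x_e)^\ast$, which the frame relations make a projection in $\Toeplitz{E}$ commuting with $\iota_A(A)$. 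For $g \ge 0$ put $h = N^\beta g \ge 0$; the frame identity gives $\iota_A(S_1 h) = \sum_e \iota_X(x_e)^\ast \iota_A(h)\iota_X(x_e)$, and one application of the KMS condition (moving each $\iota_X(x_e)^\ast$ past the remaining factors) turns $e^{-\beta}\varphi(\iota_A(S_1 h))$ into $\varphi(\iota_A(h)Q)$. Hence $\Omega(g) = \nu(h) - e^{-\beta}\nu(S_1 h) = \varphi(\iota_A(h)(1-Q)) \ge 0$, since $\iota_A(h) \ge 0$ commutes with the projection $1 - Q \ge 0$. This establishes $\Omega \ge 0$, completing surjectivity; injectivity can then be re-read off the explicit recovery formula $\Omega(g) = \varphi_\Omega(\iota_A(N^\beta g)(1-Q))$.
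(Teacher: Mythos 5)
The paper does not actually prove this theorem: it is imported verbatim from Hawkins' thesis \cite[Theorem~5.1.11]{hawkins2015applications}, so there is no in-paper argument to compare against. Your proof is correct, and it follows the standard Laca--Raeburn-style strategy that Hawkins' proof (and the earlier work on KMS states of finite graph algebras) also uses: show that a KMS$_\beta$-state is determined by its restriction $\nu$ to the coefficient algebra via the spectral-subspace/Toeplitz-relation computation, identify $\varphi_\Omega\circ\iota_A$ with $T^*\Omega$ where $T=D^{-1}(I-e^{-\beta}S_1)^{-1}$, invert $T$ using the Neumann series (valid precisely because $\beta>\log\rho(A_E)$ makes the spectral radius of $e^{-\beta}S_1$ less than $1$), and then establish positivity of the recovered measure. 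Your treatment of the positivity step --- the genuine crux --- is right: the finite Parseval frame $(\sqrt{\xi_i})$ coming from a partition of unity subordinate to $s$-sections gives $\iota_A(S_1h)=\sum_i\iota_X(\sqrt{\xi_i})^*\iota_A(h)\iota_X(\sqrt{\xi_i})$, one application of the KMS condition converts $e^{-\beta}\nu(S_1h)$ into $\varphi(\iota_A(h)Q)$, and $1-Q$ is exactly the projection $p_E$ of Lemma~\ref{lemma:elementaprojection}, which Remark~\ref{remark.M=Ma} shows commutes with $M$, so $\Omega(g)=\varphi(\iota_A(N^\beta g)\,p_E)=\varphi\big((\iota_A(N^\beta g)^{1/2}p_E)^*(\iota_A(N^\beta g)^{1/2}p_E)\big)\geq 0$. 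Your continuity claim for $v\mapsto\phi_v$ is Remark~\ref{rem.extremalKMSstates} in disguise and does follow from Lemma~\ref{lemma:s-sectionslocalconstant} and uniform convergence of $\sum_n e^{-\beta n}|E^nv|$. The only caveat worth recording is that gauge-invariance of KMS$_\beta$-states is automatic only for $\beta\neq 0$; since $\beta\leq 0$ is permitted by the hypothesis only when $\rho(A_E)=0$ (a graph with no paths beyond some finite length), this is a degenerate edge case rather than a gap in the argument.
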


\begin{remark}\label{rem.extremalKMSstates}
The proof of Theorem \ref{theorem.isoKMScompacttopgraph} shows, among other things, that $v\mapsto \phi_v$ is a homeomorphism of $E^0$ onto the space of extremal points of the set of KMS$_\beta$-states for $\tau$.
\end{remark}

Replicating the proof of \cite[Lemma 4.1.7]{hawkins2015applications} yields the following proposition bar the last statement which we will prove.

\begin{proposition}\label{proposition.representationfaithful} Let $E$ be a topological graph, fix $v \in E^0$ and let $\{e_\mu: \mu\in E^\ast v\}$ denote the canonical basis for $\ell^2(E^\ast v)$. Then there exists a linear map $\psi_v:X(E)\to \mathcal{B}(\ell^2(E^\ast v))$ and a homomorphism $\pi_v:C_0(E^0)\to \mathcal{B}(\ell^2(E^\ast v))$ such that for all $\xi\in X(E)$, $\alpha\in C_0(E^0)$ and $\mu\in E^\ast v$,
\begin{align*}
    \psi_v(\xi)e_\mu=\sum_{f\in E^1 r(\mu)}\xi(f)e_{f\mu}\quad \text{and}\quad  \pi_v(\alpha)e_\mu=\alpha(r(\mu))e_\mu.
\end{align*}
The pair $(\psi_v,\pi_v)$ is a Toeplitz representation of $\Toeplitz{E}$ on $\ell^2(E^\ast v)$ and the direct sum $\bigoplus_{v\in E^0} (\psi_v\times \pi_v)$ is faithful.\end{proposition}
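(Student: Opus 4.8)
The plan is to derive faithfulness from the gauge-invariant uniqueness theorem for Toeplitz algebras of $C^\ast$-correspondences (in the form due to Fowler--Muhly--Raeburn, and see also Katsura). Set $\psi := \bigoplus_{v\in E^0}\psi_v$ and $\pi := \bigoplus_{v\in E^0}\pi_v$, acting on $H := \bigoplus_{v\in E^0}\ell^2(E^\ast v)$, so that $\bigoplus_v(\psi_v\times\pi_v) = \psi\times\pi$ and $(\psi,\pi)$ is again a Toeplitz representation (the defining relations pass to direct sums). That theorem guarantees $\psi\times\pi$ is faithful once three conditions are checked: that $\pi$ is injective; that the representation admits a gauge action; and that $\pi(C(E^0))\cap\psi^{(1)}(\mathcal{K}(X(E))) = \{0\}$, where $\psi^{(1)}$ is the representation of the compacts determined by $\psi^{(1)}(\theta_{x,y}) = \psi(x)\psi(y)^\ast$ for the rank-one operators $\theta_{x,y}\colon z\mapsto x\cdot\inner{y}{z}$.

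Each of these verifications rests on the ``vacuum'' vectors $e_v\in\ell^2(E^\ast v)$ coming from the length-zero paths $v\in E^0$. For injectivity of $\pi$, given $f\in C(E^0)$ with $f\neq 0$ I would pick $v$ with $f(v)\neq 0$ and note that $\pi_v(f)e_v = f(r(v))e_v = f(v)e_v\neq 0$. For the gauge action, I would introduce the unitaries $U_z = \bigoplus_v U_z^v$ on $H$ defined by $U_z^v e_\mu = z^{|\mu|}e_\mu$; a one-line computation gives $U_z\psi(x)U_z^\ast = z\psi(x)$ and $U_z\pi(f)U_z^\ast = \pi(f)$, so $\beta_z := \operatorname{Ad}U_z$ restricts to an action of $\mathbb{T}$ on $C^\ast(\psi,\pi)$ implementing the gauge action.

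The substantive point is the intersection condition, and here the key observation is that $\psi^{(1)}(\mathcal{K}(X(E)))$ annihilates every vacuum vector. Indeed $\psi_v(y)e_\nu = \sum_{g\in E^1 r(\nu)}y(g)e_{g\nu}$ is supported on paths of length $|\nu|+1\geq 1$, so $\inner{e_v}{\psi(y)e_\nu} = 0$ for all $\nu$, whence $\psi(y)^\ast e_v = 0$ and therefore $\psi(x)\psi(y)^\ast e_v = 0$ for all $x,y\in X(E)$; linearity and norm-continuity of $\psi^{(1)}$ then give $\psi^{(1)}(k)e_v = 0$ for every $k\in\mathcal{K}(X(E))$. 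Now if $\pi(f) = \psi^{(1)}(k)$, applying both sides to $e_v$ yields $f(v)e_v = \psi^{(1)}(k)e_v = 0$, so $f(v) = 0$; as $v\in E^0$ is arbitrary, $f = 0$ and hence the intersection is trivial.

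With the three hypotheses in hand, the uniqueness theorem yields that $\psi\times\pi = \bigoplus_v(\psi_v\times\pi_v)$ is faithful. I expect the main obstacle to lie less in these computations than in invoking the right uniqueness theorem: one must use the version for the full Toeplitz algebra $\Toeplitz{E}$ rather than its Cuntz--Pimsner quotient, and the trivial-intersection condition is precisely the hypothesis preventing the representation from factoring through that quotient. Conceptually, $\psi\times\pi$ is the Fock representation of $\Toeplitz{E}$ fibred over the points of $E^0$, which is why faithfulness is to be expected.
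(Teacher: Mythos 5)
Your proposal is correct, but it takes a genuinely different route from the paper's. The paper proves faithfulness by exhibiting an explicit unitary $U = \bigoplus_{v} U_v \colon \bigoplus_{v\in E^0}\ell^2(E^\ast v)\to \ell^2(E^\ast)$ that intertwines $\bigoplus_v(\psi_v\times\pi_v)$ with the path-space (Fock-type) representation $\lambda^1\times\lambda^0$ of \cite[Lemma 4.1.7]{hawkins2015applications}, whose faithfulness is quoted as a black box; the only real work is checking $U_v\psi_v(\xi)=\lambda^1(\xi)U_v$ on spanning elements and that each $\ell^2(E^\ast v)$ is invariant. You instead verify directly the hypotheses of the gauge-invariant uniqueness theorem for Toeplitz algebras (Fowler--Raeburn's Theorem 2.1; equivalently Katsura's uniqueness theorem for $\mathcal{O}(J,X)$ with $J=\{0\}$): injectivity of $\pi$ via the vacuum vectors, a spatially implemented gauge action $U_ze_\mu=z^{|\mu|}e_\mu$, and triviality of $\pi(C_0(E^0))\cap\psi^{(1)}(\mathcal{K}(X(E)))$ from the fact that $\psi(y)^\ast e_v=0$ --- which is precisely the computation the paper records in Remark~\ref{rem.representationadjoint}, and which anticipates the role of the vacuum projection $p_E$ in Lemma~\ref{lemma:elementaprojection}. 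All three verifications are sound, and you are right that the Toeplitz-algebra (rather than Cuntz--Pimsner) version of the uniqueness theorem is the one required, the trivial-intersection condition being what rules out factoring through the quotient. What each approach buys: the paper's argument is shorter given Hawkins' result and sets up the identification $\ell^2(E^\ast)\cong\bigoplus_v\ell^2(E^\ast v)$ that is reused later (e.g.\ in Remark~\ref{remark.M=Ma} and Lemma~\ref{lemma.ismorphismofCstarcorrespondences}); yours is self-contained modulo a standard uniqueness theorem and would in effect reprove the faithfulness of Hawkins' representation rather than cite it. Two routine points you should make explicit if writing this up: like the paper, you take for granted the existence of the bounded operators $\psi_v(\xi)$, $\pi_v(\alpha)$ and the Toeplitz-representation identities (the paper defers these to Hawkins' computations); and the gauge action $\beta_z=\operatorname{Ad}U_z$ needs to be checked point-norm continuous on $C^\ast(\psi,\pi)$, which follows from norm-continuity on the generators ($\beta_z(\psi(x))=z\psi(x)$, $\beta_z(\pi(f))=\pi(f)$) together with a density argument.
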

\begin{proof} All but the statement that $\Theta := \bigoplus_{v\in E^0}(\psi_v\times \pi_v)$ is faithful follows from essentially replicating the computations in the proof of \cite[Lemma 4.1.7]{hawkins2015applications}, hence we will only deal with the question of faithfulness.  

Let $U_v$ be the canonical inclusion of $\ell^2(E^\ast v)$ into $\ell^2(E^\ast)$. This $U_v$ is an isometry with adjoint $U_v^\ast:\ell^2(E^\ast)\to\ell^2(E^\ast v)$ the corresponding orthogonal projection. We define $U:= \bigoplus_{v\in E^0} U_v :\bigoplus_{v\in E^0}\ell^2(E^\ast v)\to \ell^2(E^\ast)$. Then $U$ is unitary, and
\[U^\ast h= (h|_{E^\ast v})_{v\in E^0}\quad \text{for all }h\in \ell^2(E^\ast).\]
Let $\{\delta_\mu : \mu \in E^\ast\}$ be the canonical basis for $\ell^2(E^\ast)$. By \cite[Lemma 4.1.7]{hawkins2015applications}, there exist $\lambda^0: C_0(E^0) \to \mathcal{B}(\ell^2(E^\ast))$ and $\lambda^1 : X(E) \to \mathcal{B}(\ell^2(E^\ast))$ such that for all $\xi\in X(E), \alpha\in C_0(E^0)$ and $\mu\in E^\ast$,
\[\lambda^1(\xi)\delta_\mu=\sum_{f\in E^1 r(\mu)}\xi(f)\delta_{f\mu}\quad \text{and}\quad  \lambda^0(\alpha)\delta_\mu=\alpha(r(\mu))\delta_\mu.\] Furthermore, $(\lambda^1,\lambda^0)$ is a Toeplitz representation of $X(E)$ on $\ell^2(E^\ast)$ and $\lambda^1\times\lambda^0$ is faithful.
We claim that, for $a\in\Toeplitz{E}$, 
\begin{equation}\label{eq.unitaryequivalenceofproposition}
\Theta(a)=U^\ast\left(\lambda^0\times \lambda^1(a)\right)U.
\end{equation}
To show this, we first prove that $U_v (\psi_v \times \pi_v(a)) = (\lambda^1 \times\lambda^0(a)) U_v$ for all $v \in E^0$.
By linearity and continuity, it suffices to fix $x=x_1\otimes x_2\otimes ... \otimes x_m\in X(E)^{\otimes m}$, $y=y_1\otimes y_2\otimes...\otimes y_n\in X(E)^{\otimes n}$ (with the convention that if $m = 0$, we mean $x = b \in A$, and similarly if $m = 0$ for $y$), and then consider $a = \iota_{X(E)}^{\otimes m}(x)\iota_{X(E)}^{\otimes n}(y)^\ast$. With the convention that $\prod^0_{i=1} \psi_v(x_i)$ means $\pi_v(b)$ when $x = b \in A$, and similarly for $y$, we have

\[U_v \Big( \psi_v\times\pi_v(a)\Big)=U_v\Big(\prod_{i=1}^m\psi_v(x_i)\Big)\Big(\prod_{i=0}^{n-1}\psi_v(y_{n-i})^\ast\Big).\]
Direct calculation with basis vectors shows that $U_v\psi_v(x_i)=\lambda^1(x_i) U_v$ and that $U_v\psi_v(y_j)^\ast=\lambda^1(y_j)^\ast U_v$. Therefore
\begin{align*}
U_v \Big(\psi_v\times\pi_v(a)\Big)
    =\Big(\prod_{i=1}^m\lambda^{1}(x_i)\Big)\Big(\prod_{i=0}^{n-1}\lambda^1(y_{n-i})^\ast\Big) U_v
    = \big(\lambda^1\times\lambda^0(a)\big)U_v
\end{align*}
as claimed.

Since $U_v$ is an isometry, we deduce that $\psi_v\times\pi_v(a)=U_v^\ast(\lambda^1\times \lambda^0(a))U_v$. Since each $\ell^2(E^\ast v) \subseteq \ell^2(E^*)$ is invariant for $\lambda^1\times\lambda^0(a)$,
\begin{align*}
\Theta(a)&=\bigoplus_{v\in E^0}\Big(\psi_v\times \pi_v(a)\Big)=\bigoplus_{v\in E^0} U_v^\ast\Big(\lambda^1\times \lambda^0(a)\Big)U_v\\ &=\Big(\bigoplus_{v\in E^0}U_v\Big)^\ast \Big(\lambda^1\times \lambda^0(a)\Big)\Big(\bigoplus_{v\in E^0}U_v\Big)
 =U^\ast \left(\lambda^1\times \lambda^0(a)\right)U,
\end{align*}
as in equation \eqref{eq.unitaryequivalenceofproposition}. We conclude that $\Theta$ is unitarily equivalent to $\lambda^1\times\lambda^0$ and hence faithful by \cite[Lemma 4.1.7]{hawkins2015applications}.
\end{proof}

Next we describe an important set of KMS$_\infty$-states for the analysis that will follow. For each $v\in E^0$, let $\varphi_v$ be the vector state of \Toeplitz{E} given by
\begin{equation}\label{KMSinfty}
    \varphi_v(a):=\inner{\psi_v\times\pi_v(a)e_v}{e_v},\quad\text{for all } a\in \Toeplitz{E}.
\end{equation}
Denote by $S^\infty$ the set $\{\varphi_v: v\in E^0\}$.  To prove our main result, we first describe the GNS-representation of each $\varphi_v \in S^\infty$.

\begin{lemma}\label{lemma.GNSvarphiequivpsipi}
Fix $v\in E^0$. The GNS-representation of $\varphi_v\in S^\infty$ is equivalent to $\psi_v\times \pi_v$ on $\ell^2(E^\ast v)$.
\end{lemma}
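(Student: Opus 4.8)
The plan is to invoke the uniqueness of the GNS construction. By definition~\eqref{KMSinfty}, $\varphi_v$ is the vector state $a\mapsto \inner{(\psi_v\times\pi_v)(a)e_v}{e_v}$ associated to the unit vector $e_v$ in the representation $(\psi_v\times\pi_v,\ell^2(E^\ast v))$ supplied by Proposition~\ref{proposition.representationfaithful}. Hence it suffices to prove that $e_v$ is a cyclic vector for $\psi_v\times\pi_v$: the standard fact that any cyclic representation implementing a state as a vector state via its cyclic vector is unitarily equivalent to the GNS representation (with the intertwining unitary carrying $e_v$ to the canonical cyclic vector) then finishes the argument.

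To prove cyclicity, since $\{e_\mu:\mu\in E^\ast v\}$ is an orthonormal basis of $\ell^2(E^\ast v)$, it is enough to show that every $e_\mu$ lies in the cyclic subspace $\overline{(\psi_v\times\pi_v)(\Toeplitz{E})e_v}$. First I would record the key finiteness: by Remark~\ref{rem:maxexist}, $E^n v$ is a finite set for each $n$. Iterating the formula for $\psi_v$ from Proposition~\ref{proposition.representationfaithful}, for $\xi_1,\dots,\xi_n\in X(E)=C(E^1)$ a direct computation gives
\[
\psi_v(\xi_1)\cdots\psi_v(\xi_n)e_v = \sum_{f=f_1\cdots f_n\in E^n v}\Big(\prod_{i=1}^n \xi_i(f_i)\Big)e_f,
\]
a \emph{finite} linear combination of basis vectors.

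The heart of the proof is to choose the $\xi_i$ so that this sum equals a prescribed $e_\mu$ with $\mu=\mu_1\cdots\mu_n\in E^n v$. For each coordinate $i$ the set $P_i:=\{f_i:f\in E^n v\}$ is a finite subset of the compact Hausdorff (hence normal) space $E^1$ containing $\mu_i$, so Urysohn's lemma produces $\xi_i\in C(E^1)$ with $\xi_i(\mu_i)=1$ and $\xi_i\equiv 0$ on $P_i\setminus\{\mu_i\}$. For any $f\neq\mu$ in $E^n v$ some coordinate $j$ satisfies $f_j\neq\mu_j$, forcing $\xi_j(f_j)=0$ and hence $\prod_i\xi_i(f_i)=0$, while the $f=\mu$ term contributes $1$; thus $\psi_v(\xi_1)\cdots\psi_v(\xi_n)e_v=e_\mu$ \emph{exactly}. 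Since $\psi_v(\xi_1)\cdots\psi_v(\xi_n)=(\psi_v\times\pi_v)\big(\iota_{X(E)}^{\otimes n}(\xi_1\otimes\cdots\otimes\xi_n)\big)$, this exhibits $e_\mu$ in the orbit of $e_v$ (the case $\mu=v$ being handled by $\pi_v(\alpha)e_v$ with $\alpha(v)=1$), and cyclicity follows.

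The main obstacle is the construction in the previous paragraph: a priori the paths in $E^n v$ form a continuum, and one cannot hope to isolate a single edge using a continuous function. The finiteness of $E^n v$ coming from Lemma~\ref{lemma:s-sectionslocalconstant} and Remark~\ref{rem:maxexist} is precisely what reduces this to a finite separation problem and upgrades a would-be approximation argument to the exact identity above. The remaining points --- that $X(E)=C(E^1)$ for compact $E$, and the bookkeeping of concatenation indices in the iterated formula for $\psi_v$ --- are routine.
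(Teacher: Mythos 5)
Your proposal is correct and follows essentially the same route as the paper: both prove that $e_v$ is cyclic for $\psi_v\times\pi_v$ by using the finiteness of $E^nv$ together with Urysohn's lemma to build functions in $X(E)=C(E^1)$ whose images carry $e_v$ exactly onto each basis vector $e_\mu$, and then conclude by uniqueness of cyclic representations. The only cosmetic difference is that you hit $e_\mu$ in one shot with a product $\psi_v(\xi_1)\cdots\psi_v(\xi_n)$ of coordinate-isolating functions, whereas the paper treats $n=1$ and runs an induction on path length; the two arguments are interchangeable.
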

\begin{proof}
 We show that $e_v$ is a cyclic vector for $\psi_v\times \pi_v$. It is clear that $\psi_v\times \pi_v(1)e_v=e_v$. Fix $f \in E^1 v$. Since $|E^1 v|<\infty$ there exists a function $x\in X(E)$ such that $x(f)=1$ and $x(g)=0$ for $g\in E^1v\setminus{\{f\}}$, hence
\[
    \psi_v\times \pi_v(\iota_{X(E)}(x))e_v = \sum_{g\in E^1 v}x(g) e_g =e_f.
\]
Thus $\{e_f : f\in E^1v\} \subset \{\psi_v\times \pi_v(a) e_v: a \in \Toeplitz{E}\}$. An induction on $n$, using a similar argument, shows that the set $\{e_\mu : \mu\in E^nv\} \subset \{\psi_v\times \pi_v(a)e_v: a \in \Toeplitz{E}\}$  for each $n \in \mathbb{N}$. Therefore $e_v$ is cyclic for $\psi_v\times \pi_v$ and this proves the lemma by the uniqueness of cyclic representations \cite[Theorem~5.1.4]{murphy1990book}. 
\end{proof}

\begin{remark}\label{rem.representationadjoint}
    If $v\in E^0$, $\xi \in X(E)$ and $\mu, \mu' \in E^\ast v$, then
    \[\inner{\psi_v(\xi)^*e_\mu}{e_{\mu'}} = \inner{e_\mu}{\sum_{f\in E^1 r(\mu')} \xi(f) e_{f\mu'}}.
    \] 
   Hence, if $\mu=f\nu$, for $f\in E^1$ and $\nu\in E^\ast v$, then $\psi_v(\xi)^* e_\mu = \overline{\xi(f)} e_\nu$. Otherwise, if $\mu=v$, then $\psi_v(\xi)^* e_\mu = 0$. In the first case, we also have the identity
    \[\psi_v(\xi)\psi_v(\xi)^* e_\mu = \overline{\xi(f)}\sum_{g\in E^1 r(\nu)} \xi(g)e_{g\nu}.\]
\end{remark}

\begin{lemma}\label{lem.sinftyelemtensor}
    Let $E$ be a topological graph and fix $v\in E^0$. Suppose that $x\in X(E)^{\otimes m}$ and $y\in X(E)^{\otimes n}$ are elementary tensors. If either $m >0$ or $n >0$, then $\varphi_v(\iota^{\otimes m}(x)\iota^{\otimes n}(y)^*) =0$.
\end{lemma}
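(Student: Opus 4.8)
The plan is to evaluate $\varphi_v$ directly from its definition \eqref{KMSinfty} as a vector state at $e_v$, and to exploit the single fact recorded in Remark~\ref{rem.representationadjoint} that each annihilation operator $\psi_v(\xi)^*$ kills the vector $e_v$ indexed by the length-zero path $v$.

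First I would apply $\psi_v \times \pi_v$ to $a = \iota^{\otimes m}(x)\iota^{\otimes n}(y)^*$. Writing $x = x_1 \otimes \cdots \otimes x_m$ and $y = y_1 \otimes \cdots \otimes y_n$ and using the multiplicativity formula $\psi_v^{\otimes m}(x_1 \otimes \cdots \otimes x_m) = \psi_v(x_1) \cdots \psi_v(x_m)$, this gives $\psi_v \times \pi_v(a) = P Q^*$, where $P := \psi_v(x_1)\cdots\psi_v(x_m)$ and $Q := \psi_v(y_1)\cdots\psi_v(y_n)$ (with the conventions $P = \pi_v(x)$ when $m = 0$ and $Q = \pi_v(y)$ when $n = 0$). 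Then I would move $P$ across the inner product to obtain
\[
\varphi_v(a) = \inner{P Q^* e_v}{e_v} = \inner{Q^* e_v}{P^* e_v}.
\]

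The key step is the observation that one of the two vectors $P^* e_v$, $Q^* e_v$ must vanish. Indeed, if $m > 0$ then $P^* = \psi_v(x_m)^* \cdots \psi_v(x_1)^*$, and the rightmost factor acts first, giving $\psi_v(x_1)^* e_v = 0$ by Remark~\ref{rem.representationadjoint}; hence $P^* e_v = 0$. Symmetrically, if $n > 0$ then $\psi_v(y_1)^* e_v = 0$ forces $Q^* e_v = 0$. Since the hypothesis guarantees that at least one of $m$, $n$ is positive, at least one slot of the inner product is zero, and therefore $\varphi_v(a) = 0$.

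There is essentially no analytic difficulty here; the only points demanding care are bookkeeping ones. I would need to track the reversal of operator order under the adjoint so as to identify $\psi_v(x_1)^*$ (associated with the \emph{first} tensor leg) as the operator that acts first on $e_v$, and to handle the degenerate cases $m = 0$ or $n = 0$ correctly: there $P$ or $Q$ collapses to a $\pi_v$-term, which need not annihilate $e_v$, but then the complementary index is positive and the argument still delivers a vanishing factor. This is precisely where the hypothesis that either $m > 0$ or $n > 0$ is used.
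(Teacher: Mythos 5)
Your proof is correct and follows essentially the same route as the paper: evaluate the vector state $\varphi_v$ via the explicit representation $(\psi_v,\pi_v)$ and invoke Remark~\ref{rem.representationadjoint} to produce a vanishing vector. The only (harmless) difference is that you unify the two cases by passing to $\inner{Q^*e_v}{P^*e_v}$ and using $\psi_v(\xi)^*e_v=0$ on whichever side has a positive index, whereas the paper handles the case $n=0$, $m>0$ by expanding $\psi_v(x_1)\cdots\psi_v(x_m\cdot y)e_v$ into vectors $e_\mu$ with $|\mu|=m>0$ and noting these are orthogonal to $e_v$.
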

\begin{proof}
Let $x=x_1\otimes x_2\otimes\cdots x_m\in X(E)^{\otimes m} $ and $y=y_1\otimes y_2\otimes \cdots y_n\in X(E)^{\otimes n}$ . First, suppose $n>0$. Then,
\begin{align*}
 \varphi_v(\iota^{\otimes m}(x)\iota^{\otimes n}(y)^*)&=\inner{\psi_v^{\otimes m}(x)\psi_v^{\otimes n}(y)^\ast e_v}{e_v}\\ &=\inner{\psi_v^{\otimes m}(x)\psi_v(y_n)^\ast\cdots\psi_v(y_1)^\ast e_v}{e_v} = 0,
\end{align*}

by Remark \ref{rem.representationadjoint}.
Now, suppose $n=0$ and $m>0$. Then,
\[\varphi_v(\iota^{\otimes m}(x)\iota_{C_0(E^0)}(y))=\inner{\psi_v^{\otimes m}(x)\pi_v(y)e_v}{e_v}=\inner{\psi_v(x_1)\cdots \psi_v(x_m\cdot y)e_v}{e_v}. \]
By Proposition~\ref{proposition.representationfaithful}, \[\inner{\psi_v(x_1)\cdots \psi_v(x_m\cdot y)e_v}{e_v}=\sum_{\mu = \mu_1\cdots\mu_m\in E^m v}\inner{x_1(\mu_1)\cdots x_m(\mu_m)y(v)e_\mu}{e_v}=0,\]
which concludes the proof.
\end{proof}
\begin{lemma}\label{lem.sinftyweakstarlimit}
Let $E$ be a compact topological graph. Let $(v_n)_{n\in\mathbb{N}}$ be a sequence in $E^0$. Then $\varphi_{v_n}$ is weak$^\ast$ convergent if and only if $v_n$ converges in $E^0$, in which case, writing $v:=\lim_n v_n$, we have $\varphi_{v_n}\overset{w^\ast}{\to} \varphi_v$.
\end{lemma}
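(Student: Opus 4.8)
The plan is to reduce everything to the behaviour of the states $\varphi_v$ on the coefficient subalgebra $\iota_{C(E^0)}(C(E^0))$. First I would record the key structural fact on the spanning set $\clsp\{\iota^{\otimes m}(x)\iota^{\otimes n}(y)^\ast : m,n\ge 0\}$ of $\Toeplitz{E}$: by Lemma~\ref{lem.sinftyelemtensor} every spanning element with $m>0$ or $n>0$ is annihilated by $\varphi_v$, while for $m=n=0$ we have $\varphi_v(\iota_{C(E^0)}(\alpha)) = \inner{\pi_v(\alpha)e_v}{e_v} = \alpha(r(v)) = \alpha(v)$, since $r(v)=v$ for the length-$0$ path $v$. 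Thus $\varphi_v$ is completely determined by the point-evaluation $\alpha\mapsto\alpha(v)$ on $C(E^0)$.

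For the implication that $v_n\to v$ forces $\varphi_{v_n}\overset{w^\ast}{\to}\varphi_v$ (which also settles the ``in which case'' clause), I would verify convergence on the dense span: on spanning elements with $m>0$ or $n>0$ both sides vanish, and on $\iota_{C(E^0)}(\alpha)$ we have $\varphi_{v_n}(\iota_{C(E^0)}(\alpha)) = \alpha(v_n)\to\alpha(v)$ by continuity of $\alpha$. Since the $\varphi_{v_n}$ and $\varphi_v$ are all states, hence uniformly norm-bounded, a standard $3\varepsilon$ argument promotes pointwise convergence on the dense span to convergence on all of $\Toeplitz{E}$. Phrased with nets in place of sequences, the identical computation shows that $v\mapsto\varphi_v$ is continuous from $E^0$ into the state space with its weak$^\ast$ topology.

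The reverse implication is where the real work lies, and I would handle it topologically rather than by extracting subsequences, which is delicate since $E^0$ need not be metrizable or sequentially compact. Injectivity of $v\mapsto\varphi_v$ is immediate from the key fact: $\varphi_v=\varphi_w$ gives $\alpha(v)=\alpha(w)$ for all $\alpha\in C(E^0)$, and $C(E^0)$ separates the points of the compact Hausdorff space $E^0$, so $v=w$. Hence $v\mapsto\varphi_v$ is a continuous injection from the compact space $E^0$ into the Hausdorff state space, so it is a homeomorphism onto its image $S^\infty$; in particular $S^\infty$ is compact and therefore weak$^\ast$-closed. Now if $\varphi_{v_n}$ converges weak$^\ast$ to some functional $\varphi$, closedness of $S^\infty$ forces $\varphi\in S^\infty$, say $\varphi=\varphi_v$, and continuity of the inverse homeomorphism $S^\infty\to E^0$ carries the convergent sequence $\varphi_{v_n}\to\varphi_v$ to $v_n\to v$ in $E^0$.

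The main obstacle is precisely this last step: a priori the weak$^\ast$ limit of the $\varphi_{v_n}$ could fail to be of the form $\varphi_v$, and one cannot simply pass to a convergent subsequence of $(v_n)$ in $E^0$. Both difficulties dissolve once one observes that $v\mapsto\varphi_v$ is a homeomorphism onto the compact, hence closed, set $S^\infty$; the closedness identifies the limit as some $\varphi_v$, and continuity of the inverse then returns the convergence of $(v_n)$ without any appeal to sequential compactness.
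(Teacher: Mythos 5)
Your proof is correct. The forward implication is the same as the paper's: both reduce to the observation that $\varphi_v$ kills every spanning element $\iota^{\otimes m}(x)\iota^{\otimes n}(y)^\ast$ with $m>0$ or $n>0$ (Lemma~\ref{lem.sinftyelemtensor}) and restricts to evaluation at $v$ on the coefficient algebra, then pass from the dense span to all of $\Toeplitz{E}$ using that states are uniformly bounded. The reverse implication is where you genuinely diverge. The paper argues by contradiction: a non-convergent sequence in the compact space $E^0$ has two distinct accumulation points $v\neq v'$, and a function $\alpha\in C(E^0)$ separating them forces the putative weak$^\ast$ limit to take two different values on $\iota_{C(E^0)}(\alpha)$. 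You instead upgrade the forward direction to nets to show that $v\mapsto\varphi_v$ is a continuous injection from the compact space $E^0$ into the Hausdorff state space, hence a homeomorphism onto the compact (so weak$^\ast$-closed) set $S^\infty$, and then read off both that the limit is some $\varphi_v$ and that $v_n\to v$. Your route costs a little more setup but is more robust: the paper's proof extracts subsequences converging to the two accumulation points, which implicitly uses first countability of $E^0$ (a cluster point of a sequence in a general compact Hausdorff space need only be a subnet limit), whereas your argument needs no such hypothesis; it also recovers, as a byproduct, the statement that $v\mapsto\varphi_v$ is a homeomorphism onto $S^\infty$, which parallels Remark~\ref{rem.extremalKMSstates}. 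The paper's contradiction argument is shorter and could itself be repaired without subsequences by noting that $\alpha(v)$ and $\alpha(v')$ are both cluster points of the convergent scalar sequence $\alpha(v_n)$, but as written your version is the more careful of the two.
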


\begin{proof}
    First suppose that $v_n\to v$ in $E^0$. We want to show that $\varphi_{v_n}(a) \to \varphi_v (a)$ for every $a\in \Toeplitz{E}$. By linearity and continuity it is enough to prove this for $a=\iota^{\otimes m}(x)\iota^{\otimes n}(y)^\ast$ where $x\in X(E)^{\otimes m}, y\in X(E)^{\otimes n}$ are elementary tensors. If $n>0$ or $m>0$, then $\varphi_{v_n}(a)=0 = \varphi_v(a)$ by Lemma~\ref{lem.sinftyelemtensor}, so we just need to check the case when $a=\iota_{C(E^0)}(\alpha) $ for $\alpha\in C(E^0)$. Indeed,
    \[\varphi_{v_n}(\iota_{C(E^0)}(\alpha)) = \inner{\pi_{v_n}(\alpha) e_{v_n}}{e_{v_n}} = \alpha(v_n) \to \alpha(v) = \varphi_{v}(\iota_{C(E^0)}(\alpha)).\] This proves the first direction. Now suppose that $v_n$ does not converge in $E^0$. Since $E^0$ is compact, $(v_n)$ has at least two distinct accumulation points $v,v'\in E^0$. Take $\alpha\in C(E^0)$ such that $\alpha(v)\neq \alpha(v')$. Let $(v_{n_\ell})_{\ell}$ and $(v_{n_k})_k$ be subsequences of $v_n$ that converge to $v$ and $v'$, respectively. If $\varphi_n$ weak$^\ast$ converges to some $\phi$, then by the previous paragraph,
    \[\alpha(v) = \lim_\ell \varphi_{v_{n_\ell}}(\iota_{C(E^0)}(\alpha)) = \phi(\iota_{C(E^0)}(a)) = \lim_k \varphi_{v_{n_k}}(\iota_{C(E^0)}(\alpha)) = \alpha(v'),\] a contradiction.
\end{proof}

\begin{proposition}\label{prop.sinftyweakstarlimit}
Let $E$ be a compact topological graph. The set $S^\infty$ is the set of weak$^\ast$ limit points of sequences $(\phi_n)_{n\in \mathbb{N}}$ such that there exists a sequence $\beta_n \to \infty$ of real numbers such that each $\phi_n$ is an extremal KMS$_{\beta_n}$-state.
\end{proposition}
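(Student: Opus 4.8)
The plan is to prove the two inclusions between $S^\infty$ and the set of weak$^\ast$ limit points of such sequences separately, using throughout the explicit formula~\eqref{eq.kmsstateinanelement} for the extremal states together with the fact (Remark~\ref{rem.extremalKMSstates}) that for fixed $\beta > \log(\rho(A_E))$ the extremal KMS$_\beta$-states are exactly the $\phi^\beta_v$, $v \in E^0$. The crucial feature of~\eqref{eq.kmsstateinanelement} is that the length-zero path $\mu = v$ contributes $1$ to $N^\beta_v$ whereas every path of positive length is exponentially suppressed. Accordingly, the first thing I would record is a uniform tail estimate: writing $N^\beta_w - 1 = \sum_{k \ge 1}|E^k w|\,e^{-\beta k} \le \sum_{k \ge 1}\big(\max_{u \in E^0}|E^k u|\big)e^{-\beta k}$, Remark~\ref{rem:maxexist} and the definition of $\rho(A_E)$ show that this dominating series converges for $\beta > \log(\rho(A_E))$ and decreases to $0$ as $\beta \to \infty$; hence $\sup_{w \in E^0}\big(N^\beta_w - 1\big) \to 0$ as $\beta \to \infty$.

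For $S^\infty \subseteq (\text{RHS})$, I would fix $v \in E^0$, choose any $\beta_n \to \infty$ with $\beta_n > \log(\rho(A_E))$, and put $\phi_n := \phi^{\beta_n}_v$, which is extremal KMS$_{\beta_n}$ by Remark~\ref{rem.extremalKMSstates}. To see $\phi_n \to \varphi_v$ weak$^\ast$ it suffices, by linearity and continuity, to evaluate on elements $a = \iota^{\otimes m}(x)\iota^{\otimes n}(y)^\ast$ with $x, y$ elementary tensors. If $m \ne n$ both states vanish (the KMS state by the factor $\delta_{k,\ell}$ in Theorem~\ref{theorem.topgraphkmsepsilonstate}, and $\varphi_v$ by Lemma~\ref{lem.sinftyelemtensor}); if $m = n > 0$, Theorem~\ref{theorem.topgraphkmsepsilonstate} gives $|\phi^{\beta_n}_v(a)| \le e^{-\beta_n m}\|\inner{y}{x}_{C(E^0)}\|_\infty \to 0 = \varphi_v(a)$; and if $m = n = 0$, so $a = \iota_{C(E^0)}(\alpha)$, formula~\eqref{eq.kmsstateinanelement} splits the numerator as $\alpha(v)$ plus a tail bounded by $\|\alpha\|_\infty(N^{\beta_n}_v - 1) \to 0$, while $N^{\beta_n}_v \to 1$, so $\phi^{\beta_n}_v(a) \to \alpha(v) = \varphi_v(a)$.

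For the reverse inclusion, I would take a sequence $(\phi_n)$ of extremal KMS$_{\beta_n}$-states with $\beta_n \to \infty$ and a weak$^\ast$ limit point $\phi$. Discarding finitely many terms we may assume $\beta_n > \log(\rho(A_E))$ throughout, so Remark~\ref{rem.extremalKMSstates} gives $v_n \in E^0$ with $\phi_n = \phi^{\beta_n}_{v_n}$. Passing first to the subsequence realising $\phi$ as a limit and then, using compactness of $E^0$, to a further subsequence along which $v_n \to v$, I would rerun the three-case computation above as a joint limit in $(v_n, \beta_n)$. The cases $m \ne n$ and $m = n > 0$ are unchanged and give $0 = \varphi_v(a)$; for $a = \iota_{C(E^0)}(\alpha)$ the length-zero term is now $\alpha(v_n) \to \alpha(v)$ by continuity, and the positive-length tail is $\le \|\alpha\|_\infty\sup_w(N^{\beta_n}_w - 1) \to 0$ by the uniform estimate. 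Thus $\phi_n \to \varphi_v$ along this subsequence, which forces $\phi = \varphi_v \in S^\infty$.

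The main obstacle is precisely the degree-zero case $a = \iota_{C(E^0)}(\alpha)$ in the reverse inclusion, where the two limits $\beta_n \to \infty$ and $v_n \to v$ interact and one cannot simply fix the basepoint before letting $\beta \to \infty$. The remedy is to isolate the length-zero contribution $\alpha(v_n)$, which converges by continuity of $\alpha$, and to bound the positive-length tail uniformly in the moving basepoint $v_n$; this is exactly the role of the preliminary estimate $\sup_w(N^\beta_w - 1) \to 0$, so the bulk of the genuine work lies in that uniform bound. Once it is in place, the off-diagonal and positive-degree contributions are routine consequences of Theorem~\ref{theorem.topgraphkmsepsilonstate} and Lemma~\ref{lem.sinftyelemtensor}.
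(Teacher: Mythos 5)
Your proposal is correct and follows essentially the same route as the paper: both arguments rest on identifying the extremal KMS$_{\beta_n}$-states as $\phi^{\beta_n}_{v_n}$ via Remark~\ref{rem.extremalKMSstates}, isolating the length-zero term in~\eqref{eq.kmsstateinanelement}, and killing the positive-length tail with the uniform estimate $\sup_{w}(N^{\beta}_w-1)\le\sum_{i\ge 1}e^{-\beta i}\max_{v}|E^iv|\to 0$ coming from Remark~\ref{rem:maxexist}. The only organisational difference is that the paper bounds the tail for arbitrary $a$ by $\|a\|\sum_{i\ge1}e^{-\beta_n i}\max_v|E^iv|$, concludes that $\phi^{\beta_n}_{v_n}-\varphi_{v_n}\to 0$ weak$^\ast$, and then invokes Lemma~\ref{lem.sinftyweakstarlimit} to deduce that $(v_n)$ converges and to identify the limit, whereas you extract a convergent subsequence of $(v_n)$ by compactness of $E^0$ and verify the joint limit directly on spanning elements; both are valid.
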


\begin{proof}
First, assume that $(\beta_n)$ and $(\phi_n)$ are sequences as in the statement of the proposition such that $\phi_n$ weak$^\ast$ converges to some $\phi$. We need to show that there exists $v\in E^0$ such that $\phi = \varphi_v$. Since $\beta_n\to \infty$, we may assume that $\beta_n>\log(\rho(A_E))$ for all $n$. By Remark \ref{rem.extremalKMSstates}, for each $n$ there is a vertex $v_n\in E^0$ such that $\phi_n=\phi^{\beta_n}_{v_n}$, the extremal KMS$_{\beta_n}$-state described just before Theorem~\ref{theorem.topgraphkmsepsilonstate}. By equation~\eqref{eq.kmsstateinanelement}, for every $a\in \Toeplitz{E}$,
\begin{align*}
&\phi^{\beta_n}_{v_n}(a)=(N_{\phi^{\beta_n}_{v_n}})^{-1}\sum_{E^\ast v_n}e^{-\beta_n |\mu|}\inner{\psi_{v_n}\times \pi_{v_n}(a) e_\mu}{e_\mu}\\
&=(N_{\phi^{\beta_n}_{v_n}})^{-1}\Big( \inner{\psi_{v_n}\times \pi_{v_n}(a) e_{v_n}}{e_{v_n}}+ \sum_{E^\ast v_n\setminus{ \{v_n\}}}e^{-\beta_n |\mu|}\inner{\psi_{v_n}\times \pi_{v_n}(a) e_\mu}{e_\mu}          \Big).
\end{align*}

We claim that $\lim_{n \to \infty} N_{\phi^{\beta_n}_{v_n}} = 1$ and that
\[
    \lim_{n \to \infty} \sum_{E^\ast v_n\setminus{ \{v_n\}}}e^{-\beta_n |\mu|}\inner{\psi_{v_n}\times \pi_{v_n}(a) e_\mu}{e_\mu} = 0.
\]
For the first of these equalities, note that by Remark \ref{rem:maxexist}, we have for $n\in\mathbb{N}$,
\[
    1\leq N_{\phi^{\beta_n}_{v_n}}\leq 1+\sum_{i=1}^\infty e^{-\beta_n i}\max_{v\in E^0} |E^iv|.
\]
Hence $\lim_{n \to \infty} \sum_{i=1}^\infty e^{-\beta_n i}\max_{v\in E^0} |E^iv| = 0$ by the dominated convergence theorem. Thus $N_{\phi^{\beta_n}_{v_n}}\to 1$. 

The second equality follows similarly once we observe that
\[\sum_{E^\ast v_n\setminus{ \{v_n\}}}e^{-\beta_n |\mu|}\inner{\psi_{v_n}\times \pi_{v_n}(a) e_\mu}{e_\mu}\leq \|a\| \sum_{i=1}^\infty e^{-\beta_n i}\max_{v\in E^0} |E^iv|.\]

The claim implies that $\phi_{v_n}^{\beta_n} - \varphi_{v_n}$ weak$^\ast$ converges to $0$. In particular, $\varphi_{v_n}$ weak$^\ast$ converges to the same weak$^\ast$ limit as $\phi_n = \phi_{v_n}^{\beta_n}$, namely  $\phi$. So Lemma~\ref{lem.sinftyweakstarlimit} implies that $v_n$ converges to some $v\in E^0$ and $\phi = \varphi_v$.

Now, assume that $v\in E^0$. We have to show that there exist sequences $(\beta_n)$ and $(\phi_n)$ as in the statement of the proposition such that $\phi_n$ weak$^\ast$ converges to $\varphi_v$. Pick any sequence $(\beta_n)$ of real numbers such that $\beta_n > \log(\rho(A_E))$ and $\beta_n \to \infty$. Take the constant sequence $v_n = v$. By Remark \ref{rem.extremalKMSstates}, $\phi_n := \phi_{v_n}^{\beta_n} = \phi_v^{\beta_n}$ is an extremal KMS$_{\beta_n}$-state for all $n$. The same argument as before shows that $\phi_{v_n}^{\beta_n} - \varphi_{v_n} = \phi_n - \varphi_v$ weak$^\ast$ converges to $0$ and we are done.

\end{proof}
 
 \begin{lemma}\label{lemma:elementaprojection}Let $E$ be a compact topological graph. For each state $\phi$ of \Toeplitz{E}, denote its  GNS-representation by $\pi_\phi$. There exists a unique element $p_E\in \Toeplitz{E}$ such that for every $\varphi \in S^\infty$, 
 \begin{itemize}
 \item[(i)] $\pi_{\varphi}(p_E)$ is a minimal projection in $\pi_\varphi(\Toeplitz{E})$ and
 \item[(ii)] $\varphi(p_E)=1$.
 \end{itemize}
 \end{lemma}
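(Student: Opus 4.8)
The plan is to produce $p_E$ explicitly as a ``vacuum projection'' and then read off both requirements from the concrete operators $\psi_v\times\pi_v$, which by Lemma~\ref{lemma.GNSvarphiequivpsipi} realise the GNS-representations of the states in $S^\infty$ on $\ell^2(E^\ast v)$ with cyclic vector $e_v$. Concretely, I would look for a finite family $\xi_1,\dots,\xi_N\in X(E)$ that is a \emph{Parseval frame}, meaning
\[
\sum_{i=1}^N \xi_i(f)\,\overline{\xi_i(e)}=\delta_{e,f}\qquad\text{whenever }s(e)=s(f),
\]
and then set $p_E := 1-\sum_{i=1}^N \iota_{X(E)}(\xi_i)\iota_{X(E)}(\xi_i)^\ast$. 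Note that $\Toeplitz{E}$ is unital (with unit $\iota_{C(E^0)}(1)$, since the left action is unital), so $p_E\in\Toeplitz{E}$, and $p_E$ is manifestly self-adjoint.

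The main technical obstacle is constructing such a frame, and this is where compactness is used. By Lemma~\ref{lemma:s-sectionslocalconstant}, every $v\in E^0$ has a neighbourhood $W$ and disjoint open $s$-sections $(Z_e)_{e\in E^1 v}$ with $s(Z_e)=W$ and $E^1 W=\bigsqcup_{e\in E^1 v} Z_e$. Since $E^0$ is compact, finitely many such neighbourhoods $W_1,\dots,W_k$ cover $E^0$; choosing a partition of unity $(h_j)_{j=1}^k$ subordinate to $(W_j)$ with each $\supp h_j$ compact in $W_j$, I would set $\eta_{j,e}:=\sqrt{h_j\circ s}\,\chi_{Z_e^{(j)}}$ for each $j$ and each section $Z_e^{(j)}$ of $W_j$. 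Each $\eta_{j,e}$ lies in $C(E^1)=X(E)$ because its support is the compact set $(s|_{Z_e^{(j)}})^{-1}(\supp h_j)$ contained in the open section $Z_e^{(j)}$. The Parseval identity then follows from the facts that for fixed $j$ the sections partition $E^1 W_j$ and that $s$ restricts to a bijection from each section onto $W_j$, so that over a fixed vertex each section contains exactly one edge; the surviving terms collapse to $\sum_j h_j(v)=1$.

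With the frame in hand, I would compute $\psi_v\times\pi_v(p_E)$ using Remark~\ref{rem.representationadjoint}. For $\mu=f\nu\in E^\ast v$ with $|\mu|\ge 1$ one gets
\[
\sum_{i}\psi_v(\xi_i)\psi_v(\xi_i)^\ast e_\mu=\sum_{g\in E^1 r(\nu)}\Big(\sum_i\overline{\xi_i(f)}\,\xi_i(g)\Big)e_{g\nu}=e_\mu,
\]
by the Parseval identity, while $\psi_v(\xi_i)^\ast e_v=0$ gives $\sum_i\psi_v(\xi_i)\psi_v(\xi_i)^\ast e_v=0$. Hence $\sum_i\psi_v(\xi_i)\psi_v(\xi_i)^\ast$ is the projection onto $\clsp\{e_\mu:|\mu|\ge 1\}$, so $P_v:=\psi_v\times\pi_v(p_E)$ is the rank-one projection onto $\mathbb{C}e_v$. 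Since $\bigoplus_{v}\psi_v\times\pi_v$ is faithful (Proposition~\ref{proposition.representationfaithful}) and each $P_v$ is a projection, $p_E$ is a projection. Property~(ii) is immediate: $\varphi_v(p_E)=\inner{P_v e_v}{e_v}=1$. For~(i), writing $B_v=\psi_v\times\pi_v(\Toeplitz{E})$, for any $a$ we have $P_v\,\psi_v\times\pi_v(a)\,P_v=\inner{\psi_v\times\pi_v(a)e_v}{e_v}\,P_v=\varphi_v(a)P_v$, so $P_v B_v P_v=\mathbb{C}P_v$ and $P_v$ is a minimal projection; via Lemma~\ref{lemma.GNSvarphiequivpsipi} this is exactly the minimality of $\pi_{\varphi_v}(p_E)$.

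Finally, for uniqueness suppose $p$ also satisfies (i) and (ii) for every $\varphi_v\in S^\infty$, and set $P:=\psi_v\times\pi_v(p)$. From (ii), $\inner{Pe_v}{e_v}=\|Pe_v\|^2=1=\|e_v\|^2$ forces $Pe_v=e_v$, so the rank-one projection $P_v$ onto $\mathbb{C}e_v$ satisfies $P_v\le P$. Since $P_v\in B_v$ is a nonzero projection and $P$ is minimal by (i), we conclude $P_v=P$, that is, $\psi_v\times\pi_v(p)=\psi_v\times\pi_v(p_E)$ for every $v\in E^0$. Faithfulness of $\bigoplus_v\psi_v\times\pi_v$ then gives $p=p_E$.
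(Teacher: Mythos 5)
Your proof is correct. The existence half is essentially the paper's own construction: the paper also sets $p_E = 1-\sum_i\iota_{X(E)}(\sqrt{\xi_i})\iota_{X(E)}(\sqrt{\xi_i})^\ast$ for a partition of unity $\{\xi_i\}$ subordinate to a finite cover of $E^1$ by $s$-sections, and your ``Parseval frame'' identity is exactly the property that makes that choice work; your only variation is to build the partition of unity on $E^0$ and pull it back through $s$ into each section, which is cosmetic. The uniqueness half, however, is genuinely different and arguably cleaner. The paper argues that $\psi_v\times\pi_v(\Toeplitz{E})$ contains all of $\mathcal{K}(\ell^2(E^\ast v))$ (citing \cite[Theorem 2.4.9]{murphy1990book}, which implicitly requires irreducibility of $\psi_v\times\pi_v$), deduces that any minimal projection in the image is rank one, and then uses $\varphi_v(a)=1$ to pin down its range as $\mathbb{C}e_v$. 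You instead observe that condition~(ii) forces $Pe_v=e_v$, so the already-constructed rank-one projection $P_v=\psi_v\times\pi_v(p_E)\in\psi_v\times\pi_v(\Toeplitz{E})$ is a nonzero subprojection of $P$, and minimality of $P$ then gives $P=P_v$; this bypasses the compact-operators lemma and the irreducibility question entirely, at the cost of relying on the existence construction inside the uniqueness argument (which is harmless). Both routes finish by faithfulness of $\bigoplus_v\psi_v\times\pi_v$.
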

 
 \begin{proof}
 We begin with existence. As $E^1$ is compact and the source map is a local homeomorphism, there exists a finite open cover  $(U_i)_{i=1}^n$ of $E^1$ by s-sections.  Let $\{\xi_i \}_{i=1}^n$ be a partition of unity subordinate to the $U_i$ with each $\xi_i\in C_0(U_i,[0,1])$. Writing $\sqrt{\xi_i}$ for the pointwise square-root of each $\xi_i:E^1\to [0,1]$, we define an element $p_E\in \Toeplitz{E}$ by
 \begin{equation}\label{eq:definitionofa}
    p_E:=1-\sum_{i=1}^n\iota_{X(E)}\big(\sqrt{\xi_i}\big)\iota_{X(E)}\big(\sqrt{\xi_i}\big)^\ast. 
 \end{equation}
Let $v\in E^0$  be the vertex such that $\varphi = \varphi_v$, and fix $\mu\in E^\ast v\setminus{\{v\}}$. Write $\mu = f\nu$ with $f\in E^1$. By Remark~\ref{rem.representationadjoint}, since each $\xi_i$ is supported in an s-section,
\begin{align*}\psi_v\times\pi_v(p_E)e_\mu & = e_\mu-\sum_{i=1}^n\sum_{g\in E^1 r(\nu)}\sqrt{\xi_i(f)}\sqrt{\xi_i(g)}e_{g\nu} = e_\mu - \left(\sum_{i=1}^n\xi_i(f) \right)e_\mu = 0.\end{align*}

  On the other hand, again by Remark \ref{rem.representationadjoint}, $\psi_v\times\pi_v(p_E)e_v=e_v$, so $\psi_v\times\pi_v(p_E)$ is the projection onto the basis vector $e_v$, thus by Lemma~\ref{lemma.GNSvarphiequivpsipi}, $p_E$ satisfies (i) and (ii). 
  
  Now we prove uniqueness. Fix $a\in \Toeplitz{E}$  such that $\pi_\varphi(a)$ is a minimal projection in $\pi_\varphi(\Toeplitz{E})$ and $\varphi(a)=1$ for every $\varphi\in S^\infty$. Fix $v\in E^0$. By Lemma \ref{lemma.GNSvarphiequivpsipi}, $\psi_v\times \pi_v(a)$ is a minimal projection in $\psi_v\times\pi_v(\Toeplitz{E})$. Since $\psi_v\times\pi_v(p_E) \in \psi_v\times \pi_v(\Toeplitz{E})\cap \mathcal{K}(\ell^2(E^\ast v))$, we have $\mathcal{K}(\ell^2(E^\ast v))\subset \pi_{\varphi_v}(\Toeplitz{E})$ \cite[Theorem 2.4.9]{murphy1990book}. Hence,  $\psi_v\times\pi_v(a) $ is the rank-one projection $\xi\otimes  \xi^\ast$, for a unit vector $\xi\in \ell^2(E^\ast v)$ and
\begin{align*}
   1= \varphi_v(a)=\inner{\psi_v\times \pi_v(a) e_v}{e_v}=\inner{\xi\otimes \xi^\ast (e_v)}{e_v}=\vert \inner{\xi}{e_v}\vert^2. 
\end{align*}
Since $\xi$ is a unit vector, $\xi=\lambda e_v$, for some $\lambda\in \mathbb{T}$. This implies that $\psi_v\times\pi_v(a)=\xi\otimes\xi^\ast=\lambda e_v\otimes (\lambda e_v)^\ast = e_v\otimes e_v^\ast$. Hence $\psi_v\times\pi_v(a)=\psi_v\times\pi_v(p_E)$ for every $v\in E^0$. Proposition \ref{proposition.representationfaithful} implies that $\bigoplus_{v\in E^0}\psi_v\times \pi_v$ is faithful, so  $a=p_E$.
 \end{proof}

 \begin{remark}\label{remark.M=Ma}
Direct calculations with basis vectors, similar to those in the proof of Lemma \ref{lemma:elementaprojection}, show that the faithful representation $\lambda^0\times \lambda^1$ of \cite[Lemma 4.1.7]{hawkins2015applications} defined in the proof of Proposition \ref{proposition.representationfaithful} carries $p_E$ to the projection onto $\clsp\{e_v:v\in E^0\}\subset \ell^2(E^\ast)$. Define $M\coloneqq \iota_{C(E^0)}(C(E^0))$ and observe that $p_E\in M'$, the commutant of $M$. Indeed, for $\alpha\in C(E^0)$ and $\mu\in E^\ast$,
\begin{align*}
\lambda^0\times \lambda^1(\iota_{C(E^0)}(\alpha)p_E)e_\mu
    &= \begin{cases}
        \lambda^0(\alpha)e_\mu & \text{if $|\mu|=0$}\\
        0& \text{if $|\mu|> 0$}
        \end{cases}\\
    &= \lambda^0\times \lambda^1(p_E\iota_{C(E^0)}(\alpha))e_\mu.
\end{align*}
 In particular, if $\iota_{C(E^0)}(\alpha)p_E = 0$, then $\lambda^0(\alpha)e_v = 0$ for all $v\in E^0$ so that $\alpha = 0$. Thus the map $\rho_E: M\to Mp_E$ given by $\rho(m)\coloneqq mp_E=p_E m$ is an isomorphism of $C^\ast$-algebras.
 \end{remark}
In what follows, we write $\iota_{X(E)}(\xi)$ as $\iota_X(\xi)$ to lighten notation.

For the following lemma, recall that $\mathcal{T}C^*(E)_1$ denotes the first spectral subspace $\{a \in \mathcal{T}C^*(E) : \gamma_z(a) = za\text{ for all } z \in \mathbb{T}\}$.

 \begin{lemma}\label{lemma.ismorphismofCstarcorrespondences}
 Let $E$ be a compact topological graph. Let $M:=\iota_{C(E^0)}(C(E^0))\subset \Toeplitz{E}$, let $p_E\in \Toeplitz{E}$ be as in Lemma \ref{lemma:elementaprojection} and let $\rho_E:M\to Mp_E$ be the isomorphism $\rho_E(m)=mp_E$ of Remark~\ref{remark.M=Ma}. Define 
 \[
 \psi_E: X(E)\to \leftidx{_\Toeplitz{E}}{\Toeplitz{E}}{_\Toeplitz{E}}
 \]
 by $\psi_E(\xi)=\iota_{X}(\xi)p_E$. Then $(\psi_E,\iota_{C(E^0)})$ is a bimodule morphism, and $\psi_E(X(E))=\Toeplitz{E}_1p_E$. Furthermore, there is an $M$-valued inner product on $\Toeplitz{E}_1p_E$ such that
 \begin{equation}\label{eq:M-valued ip}
     \inner{\psi_E(\xi)}{\psi_E(\eta)}_M=\rho_E^{-1}(\psi_E(\xi)^\ast\psi_E(\eta))\text{ for all } \xi,\eta\in X(E),
 \end{equation}
 and with respect to this inner product, $(\psi_E,\iota_{C(E^0)}):X(E)\to \Toeplitz{E}_1p_E$ is an isomorphism of $C^\ast$-correspondences.
 \end{lemma}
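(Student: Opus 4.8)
The plan is to establish the four assertions in turn: first the bimodule-morphism property (i.e.\ compatibility with the left and right actions), then the range computation $\psi_E(X(E)) = \Toeplitz{E}_1 p_E$, then the $M$-valued inner product, and finally to assemble these into a correspondence isomorphism. For the first assertion, linearity of $\psi_E$ is immediate, so I would only check the two actions. Since $(\iota_X,\iota_{C(E^0)})$ is a Toeplitz representation of $X(E)$ we have $\iota_X(\alpha\cdot\xi)=\iota_{C(E^0)}(\alpha)\iota_X(\xi)$ and $\iota_X(\xi\cdot\alpha)=\iota_X(\xi)\iota_{C(E^0)}(\alpha)$; combining these with the fact that $p_E$ commutes with $M$ (Remark~\ref{remark.M=Ma}) yields $\psi_E(\alpha\cdot\xi)=\iota_{C(E^0)}(\alpha)\psi_E(\xi)$ and $\psi_E(\xi\cdot\alpha)=\psi_E(\xi)\iota_{C(E^0)}(\alpha)$. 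Thus $(\psi_E,\iota_{C(E^0)})$ respects the underlying bimodule structure (note that it is \emph{not} a Toeplitz representation into $\Toeplitz{E}$ in the strict sense, since $\psi_E(\xi)^*\psi_E(\eta)=\iota_{C(E^0)}(\inner{\xi}{\eta})p_E$ rather than $\iota_{C(E^0)}(\inner{\xi}{\eta})$ --- this is precisely why the corrected $M$-valued inner product is needed below).

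Next I would compute the range. The inclusion $\psi_E(X(E))\subseteq\Toeplitz{E}_1 p_E$ is easy, since $\iota_X(\xi)\in\Toeplitz{E}_1$ and $p_E$ is gauge-invariant. For the reverse inclusion I would use the spanning description $\Toeplitz{E}_1=\clsp\{\iota_X^{\otimes p}(\zeta)\iota_X^{\otimes q}(\eta)^*:p-q=1\}$ together with the identification, from Remark~\ref{remark.M=Ma}, of $p_E$ with the projection onto $\clsp\{e_v:v\in E^0\}$ in the faithful representation defined there. Because $\iota_X^{\otimes q}(\eta)^*$ lowers path-length by $q$, it annihilates the length-zero vectors whenever $q>0$; by linearity and continuity it suffices to see this on elementary tensors, where it follows from the adjoint formula of Remark~\ref{rem.representationadjoint}. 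Faithfulness then forces $\iota_X^{\otimes q}(\eta)^*p_E=0$ for all $q>0$, so only the $q=0,\ p=1$ generators survive right-multiplication by $p_E$, and each such generator equals $\iota_X(\zeta)\iota_{C(E^0)}(\eta)^*p_E=\iota_X(\zeta\cdot\overline{\eta})p_E=\psi_E(\zeta\cdot\overline{\eta})\in\psi_E(X(E))$. Since right-multiplication by $p_E$ is continuous, this gives $\Toeplitz{E}_1 p_E\subseteq\overline{\psi_E(X(E))}$; and $\psi_E(X(E))$ is already closed because $\psi_E$ is isometric --- indeed $\psi_E(\xi)^*\psi_E(\xi)=\iota_{C(E^0)}(\inner{\xi}{\xi})p_E=\rho_E(\iota_{C(E^0)}(\inner{\xi}{\xi}))$, and $\rho_E$ and $\iota_{C(E^0)}$ are isometric, so $\|\psi_E(\xi)\|=\|\xi\|$.

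Finally, the same computation shows $\psi_E(\xi)^*\psi_E(\eta)=\iota_{C(E^0)}(\inner{\xi}{\eta})p_E\in Mp_E$, so $\inner{\psi_E(\xi)}{\psi_E(\eta)}_M:=\rho_E^{-1}(\psi_E(\xi)^*\psi_E(\eta))=\iota_{C(E^0)}(\inner{\xi}{\eta})$ is well defined and $M$-valued; transporting the $C(E^0)$-valued inner product of $X(E)$ across the isometric bijection $\psi_E$ confirms the Hilbert-module axioms. Assembling everything, $\iota_{C(E^0)}\colon C(E^0)\to M$ is a $*$-isomorphism onto the coefficient algebra, $\psi_E\colon X(E)\to\Toeplitz{E}_1 p_E$ is an isometric bijection that intertwines both actions and preserves inner products by construction, so $(\psi_E,\iota_{C(E^0)})$ is an isomorphism of $C^*$-correspondences. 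I expect the main obstacle to be the reverse inclusion in the range computation --- specifically, showing that right-multiplication by $p_E$ annihilates every generator of $\Toeplitz{E}_1$ whose ``creation depth'' $q$ is positive; this is exactly where the concrete description of $p_E$ as the length-zero projection (Remark~\ref{remark.M=Ma}) and the faithfulness of Proposition~\ref{proposition.representationfaithful} carry the argument.
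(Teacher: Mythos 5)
Your proposal is correct and follows essentially the same route as the paper: identify $p_E$ with the length-zero projection in the faithful representation of Remark~\ref{remark.M=Ma}, use the adjoint formula of Remark~\ref{rem.representationadjoint} to kill all generators of $\Toeplitz{E}_1p_E$ with $q>0$, and verify the bimodule and inner-product compatibility by direct computation using that $p_E$ commutes with $M$. Your explicit observation that $\psi_E$ is isometric (so its image is closed and the closed span collapses to $\psi_E(X(E))$) is a point the paper leaves implicit, but otherwise the arguments coincide.
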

 \begin{proof}
 We start by showing that $\psi_E(X(E))=\Toeplitz{E}_1p_E$. For this, recall that $\Toeplitz{E}=\clsp\{\iota_X^{\otimes m}(x)\iota_X^{\otimes n}(y)^\ast: n,m\in \mathbb{N},x\in X(E)^{\otimes m},y\in X(E)^{\otimes n}\}$. Note that $\gamma_z(\iota_X^{\otimes m}(x)\iota_X^{\otimes n}(y)^\ast)=z^{m-n}\iota_X^{\otimes m}(x)\iota_X^{\otimes n}(y)^\ast$, so
 \[\Toeplitz{E}_1=\clsp\{\iota_X^{\otimes n+1}(x)\iota_X^{\otimes n}(y)^\ast: n\geq 0, x\in X(E)^{\otimes n+1},y\in X(E)^{\otimes n}\}.\]
 Hence,
 \[\Toeplitz{E}_1p_E=\clsp\{\iota_X^{\otimes n+1}(x)\iota_X^{\otimes n}(y)^\ast p_E: n\geq 0, x\in X(E)^{\otimes n+1},y\in X(E)^{\otimes n}\}.\]
 Suppose that $x=x_1\otimes x_2\otimes...\otimes x_{n+1}$ and that $y=y_1\otimes y_2\otimes...\otimes y_n$ if $n>0$ or $y=a\in\leftidx{_A}{A}{_A}$ if $n=0$ . For $\mu\in E^\ast$, the representation $(\lambda^0,\lambda^1)$ as in Remark \ref{remark.M=Ma} satisfies
 \[\lambda^0\times\lambda^1(\iota_X^{\otimes n+1}(x)\iota_X^{\otimes n}(y)^\ast p_E)e_\mu=\begin{cases}\lambda^0\times\lambda^1(\iota_X^{\otimes n+1}(x)\iota_X^{\otimes n}(y)^\ast)e_\mu,\text{ if $|\mu|=0$},\\0, \text{ if $|\mu|>0$.}
 \end{cases}
\]
If $|\mu|=0$, then by the proof of \cite[Lemma 4.1.7]{hawkins2015applications}, if $n>0$, we have $\lambda^1(y_1)^\ast e_\mu=0$. Hence, for $|\mu|=0$,
\[\lambda^0\times\lambda^1(\iota_X^{\otimes n+1}(x)\iota_X^{\otimes n}(y)^\ast)e_\mu=\begin{cases}\lambda^1(x\cdot a^\ast)e_\mu, \text{ if $n=0$},
\\
0, \text{ if $n>0$}.
\end{cases}\]
Since $\lambda^1\times\lambda^0$ is faithful representation, we conclude that $\iota_X^{\otimes n+1}(x)\iota_X^{\otimes n}(y) p_E=0$ if $n>0$ and
\begin{align*}
   \Toeplitz{E}_1p_E&=\clsp\{\iota_X(x)\iota_{C(E^0)}(a)^\ast p_E: x\in X(E),a\in C(E^0)\}\\
   &=\{\psi_E(\xi): \xi\in X(E)\}=\psi_E(X(E)). 
\end{align*}
 
  Now we show that $(\psi,\iota_{C(E^0)})$ is a bimodule morphism. Indeed, for $a\in C(E^0)$ and $\xi\in X(E)$,
\[\psi_E(a\cdot\xi)=\iota_X(a\cdot \xi)p_E=\iota_{C(E^0)}(a)\iota_X(\xi)p_E=\iota_{C(E^0)}(a)\psi_E(\xi).\]
Since $p_E$ commutes with $M$ as in Remark \ref{remark.M=Ma}, it follows from a similar computation that $\psi$ preserves the right actions of $C(E^0)$. So $(\psi_E,\iota_{C(E^0)})$ preserves the bimodule structure. It remains to show that $(\psi_E,\iota_{C(E^0)})$ is an isomorphism of Hilbert modules. A simple calculation shows that it respects the $M$-valued inner-product of~\eqref{eq:M-valued ip}:
 \begin{align*}
   \inner{\psi_E(\xi)}{\psi_E(\eta)}_M
    &= \rho^{-1}_E(\psi_E(\xi)^\ast\psi_E(\eta))\\
    &=\rho^{-1}_E((\iota_X(\xi)a)^\ast\iota_X(\eta)p_E)
     =\iota_X(\xi)^\ast\iota_X(\eta)
     =\iota_{C(E^0)}(\inner{\xi}{\eta}_{C(E^0)}).  
 \end{align*}
 Hence $X(E)\cong \Toeplitz{E}_1p_E$ as $C^\ast$-correspondences.
 \end{proof}

We can now prove our first main theorem. Recall the notion of an isomorphism of triples from the introduction.
 
 \begin{theorem} Let $E$ and $F$ be compact topological graphs. Let $\gamma^E$ and $\gamma^F$ be the gauge actions on \Toeplitz{E} and \Toeplitz{F} and let $M_E=\iota_{C(E^0)}(C(E^0))$ and $M_F=\iota_{C(F^0)}(C(F^0))$. Suppose that $\theta:(\Toeplitz{E},\gamma^E,M_E)\to(\Toeplitz{F},\gamma^F,M_F)$ is an isomorphism of triples. Let $\theta_M:=\theta|_{M_E}$. Then there exists a unique linear map $\theta_X:X(E)\to X(F)$ such that 
 \begin{equation}\label{eq.theta_X}
\theta(\iota_X(\xi)p_E)=\iota_X(\theta_X(\xi))p_F\quad \text{for all }\xi\in X(E),     
 \end{equation}
 and $(\theta_X,\theta_M)$ is an isomorphism of $C^*$-correspondences. 
 \end{theorem}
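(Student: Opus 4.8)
The plan is to transport $\theta$ through the two correspondence isomorphisms supplied by Lemma~\ref{lemma.ismorphismofCstarcorrespondences}: write $\psi_E:X(E)\to\Toeplitz{E}_1 p_E$, $\psi_E(\xi)=\iota_X(\xi)p_E$, and let $\psi_F:X(F)\to\Toeplitz{F}_1 p_F$ be the analogous isomorphism for $F$. Since $\theta$ intertwines the gauge actions, it carries each spectral subspace onto the corresponding one, so $\theta(\Toeplitz{E}_1)=\Toeplitz{F}_1$. Thus, \emph{provided} we know $\theta(p_E)=p_F$, we obtain $\theta(\Toeplitz{E}_1 p_E)=\Toeplitz{F}_1 p_F$, and we may define $\theta_X:=\psi_F^{-1}\circ\theta|_{\Toeplitz{E}_1 p_E}\circ\psi_E$. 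By construction $\psi_F(\theta_X(\xi))=\theta(\psi_E(\xi))$, that is $\iota_X(\theta_X(\xi))p_F=\theta(\iota_X(\xi)p_E)$, which is exactly~\eqref{eq.theta_X}; and if $\theta_X'$ also satisfies~\eqref{eq.theta_X} then $\psi_F(\theta_X(\xi))=\psi_F(\theta_X'(\xi))$ for all $\xi$, so injectivity of $\psi_F$ forces $\theta_X=\theta_X'$. Hence the theorem reduces to proving $\theta(p_E)=p_F$ and then checking that $\theta_X$ respects the correspondence structure.

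The main obstacle is establishing $\theta(p_E)=p_F$, and I would do this via the uniqueness clause of Lemma~\ref{lemma:elementaprojection}, by showing that $\theta(p_E)$ satisfies conditions (i) and (ii) for every $\varphi\in S^\infty_F$. The key point is that pullback along $\theta$ is a bijection $S^\infty_F\to S^\infty_E$, $\varphi\mapsto\varphi\circ\theta$. Indeed, since $\theta$ is a $*$-isomorphism intertwining the dynamics $t\mapsto\gamma^E_{e^{it}}$ and $t\mapsto\gamma^F_{e^{it}}$, the map $\varphi\mapsto\varphi\circ\theta$ is an affine weak$^*$-homeomorphism of state spaces carrying extremal KMS$_\beta$-states to extremal KMS$_\beta$-states and preserving weak$^*$ limits of sequences; by the intrinsic description in Proposition~\ref{prop.sinftyweakstarlimit} it therefore restricts to a bijection of $S^\infty_F$ onto $S^\infty_E$. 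Now fix $\varphi\in S^\infty_F$ and set $\varphi':=\varphi\circ\theta\in S^\infty_E$. Condition (ii) is immediate: $\varphi(\theta(p_E))=\varphi'(p_E)=1$. For condition (i), note that if $(\pi_\varphi,H_\varphi,\Omega_\varphi)$ is a GNS triple for $\varphi$, then $(\pi_\varphi\circ\theta,H_\varphi,\Omega_\varphi)$ is a GNS triple for $\varphi'$ (the vector $\Omega_\varphi$ stays cyclic because $\theta$ is surjective), so $\pi_{\varphi'}\cong\pi_\varphi\circ\theta$ by uniqueness of the GNS representation. Under this equivalence $\pi_{\varphi'}(p_E)$ corresponds to $\pi_\varphi(\theta(p_E))$ and $\pi_{\varphi'}(\Toeplitz{E})$ to $\pi_\varphi(\theta(\Toeplitz{E}))=\pi_\varphi(\Toeplitz{F})$; since $\pi_{\varphi'}(p_E)$ is a minimal projection in $\pi_{\varphi'}(\Toeplitz{E})$ by Lemma~\ref{lemma:elementaprojection}(i), it follows that $\pi_\varphi(\theta(p_E))$ is a minimal projection in $\pi_\varphi(\Toeplitz{F})$. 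Thus $\theta(p_E)$ meets both requirements for every $\varphi\in S^\infty_F$, and uniqueness in Lemma~\ref{lemma:elementaprojection} gives $\theta(p_E)=p_F$.

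It remains to check that $(\theta_X,\theta_M)$ is an isomorphism of $C^*$-correspondences, where $\theta_M=\theta|_{M_E}:M_E\to M_F$ is a $*$-isomorphism and, via the faithful embeddings $\iota_{C(E^0)}$ and $\iota_{C(F^0)}$, is viewed as a $*$-isomorphism $C(E^0)\to C(F^0)$. Bijectivity of $\theta_X$ is clear, being a composite of bijections. The module identities I would obtain by transporting each structure through $\psi_E$, $\theta$, $\psi_F$: for instance $\psi_F(\theta_X(\alpha\cdot\xi))=\theta(\iota_{C(E^0)}(\alpha)\psi_E(\xi))=\theta_M(\iota_{C(E^0)}(\alpha))\,\psi_F(\theta_X(\xi))=\psi_F(\theta_M(\alpha)\cdot\theta_X(\xi))$, using that $\psi_E,\psi_F$ intertwine the left actions and that $\theta$ is multiplicative, whence $\theta_X(\alpha\cdot\xi)=\theta_M(\alpha)\cdot\theta_X(\xi)$ by injectivity of $\psi_F$; the right action is identical. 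Finally, for the inner product I would compute, using that $\theta$ is a $*$-homomorphism with $\theta(p_E)=p_F$, together with~\eqref{eq:M-valued ip} and the identity $\psi_E(\xi)^\ast\psi_E(\eta)=\iota_{C(E^0)}(\inner{\xi}{\eta}_{C(E^0)})\,p_E$,
\begin{align*}
\iota_{C(F^0)}\big(\inner{\theta_X(\xi)}{\theta_X(\eta)}_{C(F^0)}\big)
&=\inner{\psi_F(\theta_X(\xi))}{\psi_F(\theta_X(\eta))}_{M_F}
=\rho_F^{-1}\big(\psi_F(\theta_X(\xi))^\ast\psi_F(\theta_X(\eta))\big)\\
&=\rho_F^{-1}\big(\theta(\psi_E(\xi)^\ast\psi_E(\eta))\big)
=\rho_F^{-1}\big(\theta_M(\iota_{C(E^0)}(\inner{\xi}{\eta}_{C(E^0)}))\,p_F\big)\\
&=\theta_M\big(\iota_{C(E^0)}(\inner{\xi}{\eta}_{C(E^0)})\big),
\end{align*}
which says precisely that $\theta_X$ intertwines the inner products along $\theta_M$. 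This establishes that $(\theta_X,\theta_M)$ is an isomorphism of $C^*$-correspondences.
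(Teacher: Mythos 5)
Your proposal is correct and follows essentially the same route as the paper: reduce everything to showing $\theta(p_E)=p_F$ via the uniqueness clause of Lemma~\ref{lemma:elementaprojection}, using that $\varphi\mapsto\varphi\circ\theta$ carries $S^\infty_F$ bijectively onto $S^\infty_E$ (via the KMS characterisation in Proposition~\ref{prop.sinftyweakstarlimit}) and that GNS representations pull back compatibly, and then define $\theta_X=\psi_F^{-1}\circ\theta\circ\psi_E$ through the isomorphisms of Lemma~\ref{lemma.ismorphismofCstarcorrespondences}. Your write-up is in fact slightly more explicit than the paper's on two points the paper leaves implicit --- the precise justification that pullback preserves $S^\infty$, and the direct verification of the inner-product and module identities --- but the argument is the same.
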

 \begin{proof}
 We first claim that $\theta(\Toeplitz{E}_1)=\Toeplitz{F}_1$ and $\theta(p_E)=p_F$. The first is because $\theta$ interwines the gauge actions $\gamma_E$, $\gamma_F$. For the second we show that $\theta(p_E)\in \Toeplitz{F}$ satisfies properties (i)~and~(ii) of Lemma~\ref{lemma:elementaprojection}. Let $\varphi\in S^\infty_F$ (the subscript $F$ is to make it explicit that we are referring to the states of \Toeplitz{F}). Then writing $\xi$ for the corresponding cyclic vector in the GNS-space $\mathcal{H}_\varphi$, for any $b\in \Toeplitz{E}$,
 \[\varphi(\theta(b))=\inner{\pi_\varphi(\theta(b))\xi}{\xi}.\]
Since $\theta$ is a gauge invariant isomorphism, $\phi\to \phi\circ \theta$ is an isomorphism of the KMS$_\beta$-simplex of $(\Toeplitz{F},\gamma_F)$ to that of $(\Toeplitz{E},\gamma_E)$ for each $\beta$. Thus, given an extremal KMS$_\beta$-state $\phi$ of \Toeplitz{F}, the composition $\phi\circ \theta$ is an extremal KMS$_\beta$-state of $\Toeplitz{E}$. Since $p_E$ satisfies condition~(ii) of Lemma~\ref{lemma:elementaprojection} in $\mathcal{T}C^*(E)$, we have $\phi(\theta(p_E)) = \phi \circ \theta(p_E) = 1$ for each extremal $\phi$, and so $\theta(p_E)$ satisfies condition~(ii) of Lemma~\ref{lemma:elementaprojection} in $\mathcal{T}C^*(F)$. So, writing $\eta$ for the cyclic vector in the GNS-space $\mathcal{H}_{\varphi\circ \theta}$, for $b\in \Toeplitz{E}$ we have
 \[\varphi\circ\theta(b)=\inner{\pi_{\varphi\circ \theta}(b)\eta}{\eta}.\]
 Hence, by uniqueness of the GNS construction, $\pi_{\varphi}\circ \theta$ is unitarily equivalent to $\pi_{\varphi\circ \theta}$. Since $\pi_\varphi(\theta(b))$ is a minimal projection if and only if $\pi_{\varphi\circ\theta}(b)$ is a minimal projection, $\theta(p_E)$ satisfies condition~(i) of Lemma~\ref{lemma:elementaprojection}. Thus by uniqueness, $\theta(p_E)=p_F$. 
 
 Considering the diagram
 
 \[\tag{$\star$}
\begin{tikzcd}
X(E) \arrow[d, "\theta_X"', dashed] \arrow[r, "\psi_E"] & \Toeplitz{E}_1p_E \arrow[r, hook] \arrow[d, "\theta"] & \Toeplitz{E} \arrow[d, "\theta"] \\
X(F) \arrow[r, "\psi_F"']                               & \Toeplitz{F}_1p_F \arrow[r, hook]                     & \Toeplitz{F},                    
\end{tikzcd}
\]
 we show that the map $\theta_X$ that makes the diagram $\star$ commute satisfies \eqref{eq.theta_X}. For $\xi\in X(E)$,
\begin{align}
    \iota_{X(F)}(\theta_X(\xi))p_F&=\iota_{X(F)}(\psi_F^{-1}\circ\theta\circ\psi_E(\xi))p_F \label{eq.prooftheorem}\\
    &=\iota_{X(F)}(\psi_F^{-1}\circ\theta(\iota_{X(E)}(\xi)p_E))p_F.\nonumber
\end{align}
We already saw that $\theta(\Toeplitz{E}_1)=\Toeplitz{F}_1$ and $\theta(p_E)=\theta(p_F)$. Consequently, $\theta(\iota_{X(E)}(\xi)p_E)=\theta(\iota_{X(E)}(\xi))p_F\in \Toeplitz{F}_1p_F$, so there exists $\xi'\in X(F)$ such that
\[\theta(\iota_{X(E)}(\xi)p_E)=\iota_{X(F)}(\xi')p_F=\psi_F(\xi').\]
Hence, we obtain
\[\iota_{X(F)}(\theta_X(\xi)p_F)\overset{\eqref{eq.prooftheorem}}{=}\iota_{X(F)}(\xi')p_F=\theta(\iota_{X(E)}(\xi)p_E),\] that is \eqref{eq.theta_X}.
Since the maps $\psi_E$ and $\psi_F$ in the diagram are isomorphisms, we deduce that $(\theta_X,\theta_M)$ is an isomorphism of $C^\ast$-correspondences.
 \end{proof}

\section{Local reconstruction of topological graphs}\label{section.local}

In this section we investigate how to recover a compact topological graph up to local conjugacy in the sense of Davidson--Roydor \cite{DavRoy} from its Hilbert bimodule. We arrived at this result independently, but subsequently discovered that it can be recovered from Davidson and Roydor's results about tensor algebras. We thank both Adam Dor-On and the anonymous referee for bringing these results to our attention. We include a proof here because we feel that the direct passage from the bimodule to the local conjugacy class of the graph is more elementary than the approach that passes through the tensor algebra. 

We first recall Davidson and Roydor's notion of local conjugacy, which in turn is based on Davidson and Katsoulis' notion of local conjugacy for local homeomorphisms \cite{DavKat}.

\begin{definition}[{\cite[Definition~4.3]{DavRoy}}]\label{definition.localisomorphism}
Let $E$ and $F$ be topological graphs. We say that $E$ and $F$ are \textit{locally conjugate}, and write $E \cong_{\operatorname{loc}} F$ if there exists a homeomorphism $\phi^0:E^0\to F^0$ such that for each  $v\in E^0$ there is a neighborhood $U$ of $v$ and a homeomorphism $\phi^1_U:E^1U\to F^1\phi^0(U)$ such that 
\begin{equation}\label{eq.localisomorphismdefinition}
r_F\circ \phi_U^1=\phi^0\circ r_E|_{E^1 U}\qquad\text{and}\qquad 
s_F\circ \phi_U^1=\phi^0\circ s_E|_{E^1 U}.    
\end{equation}
\end{definition}

Our main result in this section is that if compact topological graphs have isomorphic graph modules, then they are locally conjugate. For that, we need to collect some technical lemmas first.
\begin{lemma}\label{lemma:permutationofbasisvectors}
Let $(e_1, \dots, e_k)$ be the standard basis for $\mathbb{C}^k$. If $\{x_1,...,x_k\}\subset \mathbb{C}^k$ is a basis, there exists a permutation $\sigma$ of $\{1, \dots, k\}$ such that $\inner{x_i}{e_{\sigma(i)}}\neq 0$ for all $i=1,...,k$.
\end{lemma}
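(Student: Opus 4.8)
The plan is to reinterpret the inner products $\inner{x_i}{e_j}$ as (conjugates of) matrix entries and then to exploit the nonvanishing of a determinant. Concretely, let $A$ be the $k \times k$ matrix with entries $A_{ij} := \inner{x_i}{e_j}$. Since the inner product is conjugate-linear in the first variable, $A_{ij}$ is the complex conjugate of the $j$-th coordinate of $x_i$, so $A$ is the entrywise conjugate of the coordinate matrix of the family $(x_1, \dots, x_k)$ with respect to the standard basis. Because $\{x_1, \dots, x_k\}$ is a basis of $\mathbb{C}^k$, this coordinate matrix is invertible, and hence $\det A \neq 0$.

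First I would invoke the Leibniz expansion $\det A = \sum_\sigma \operatorname{sgn}(\sigma) \prod_{i=1}^k A_{i,\sigma(i)}$, the sum being over all permutations $\sigma$ of $\{1, \dots, k\}$. Since the left-hand side is nonzero, at least one summand must be nonzero, so there is a permutation $\sigma$ with $\prod_{i=1}^k A_{i,\sigma(i)} \neq 0$. A product of scalars is nonzero exactly when every factor is nonzero, so $A_{i,\sigma(i)} = \inner{x_i}{e_{\sigma(i)}} \neq 0$ for all $i$, which is precisely the desired conclusion.

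An alternative route, which I would keep in reserve, is via Hall's marriage theorem: build the bipartite graph on two copies of $\{1, \dots, k\}$ with an edge from $i$ to $j$ whenever $\inner{x_i}{e_j} \neq 0$, and observe that a perfect matching is exactly a permutation $\sigma$ of the required form. Hall's condition is then verified by linear independence: if some $S \subseteq \{1, \dots, k\}$ had neighbourhood satisfying $|N(S)| < |S|$, then the vectors $(x_i)_{i \in S}$ would all lie in the coordinate subspace $\lsp\{e_j : j \in N(S)\}$, which has dimension $|N(S)| < |S|$, contradicting their linear independence.

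I expect no serious obstacle here; the only substantive point is recognising that nonvanishing of the determinant forces a single permutation term to survive (equivalently, that linear independence supplies Hall's condition). Both formulations reduce the combinatorial statement to a one-line consequence of a standard fact, and I would favour the determinant argument for its brevity.
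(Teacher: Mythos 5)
Your determinant argument is exactly the paper's proof: form the coordinate matrix of the $x_i$, note it is invertible, expand the determinant over permutations, and extract a permutation whose product of entries is nonzero (the conjugation coming from the inner product is harmless since a scalar is nonzero iff its conjugate is). The Hall's-theorem alternative you keep in reserve is also valid but unnecessary; the proposal is correct and matches the paper's approach.
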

\begin{proof}
Let $A$ be the $k\times k$ matrix with $i$th column $x_i$, and let $S_k$ be the symmetric group. Since $\{x_1,...,x_k\}$ are linearly independent, $A$ is invertible. Therefore,
\[\det(A)=\sum_{\sigma\in S_k} \big(\operatorname{sgn}(\sigma)\prod_{i=1}^k A_{i,\sigma(j)}\big)\neq 0.\]
Hence there exists $\sigma\in S_k$ such that $\prod_{i=1}^k A_{i,\sigma(i)}\neq 0$, and hence \[\inner{x_i}{e_{\sigma(i)}} = A_{i,\sigma(i)}\neq 0\] for all $i=1,...,k$.
\end{proof}

\begin{lemma}\label{lemma.principallemmatoprovetheorem}
Let $E$ be a compact topological graph. Suppose that $h\in C(E^0,[0,1])$ and $g_1,\ldots,g_k\in X(E)$ satisfy
\begin{itemize}
    \item[(1)]$\inner{g_i}{g_j}_{C(E^0)}=\delta_{i,j} h$; 
    \item[(2)]$\overline{X(E)\cdot h}\subseteq\overline{\operatorname{span}}\{g_i\cdot a: i\leq k, a\in C(E^0)\}$; and
    \item[(3)] For each $i\leq k$, there exists a continuous function $\alpha_i: \supp(h) \to E^0$ such that for every $a \in C(E^0)$ and for any function $\tilde{a} \in C(E^0)$ such that $\tilde{a}|_{\supp(h)} = a \circ \alpha_i$, we have $a \cdot g_i = g_i \cdot \tilde{a}$.
\end{itemize}
Then, for each $v\in \operatorname{supp}^\circ(h)$, there exist a neighborhood $W \subseteq \supp(h)$ of $v$, s-sections $Z_e$, $e\in E^1 v$ as in Lemma~\ref{lemma:s-sectionslocalconstant} and a bijection $\sigma:\{1,\ldots,k\}\to E^1 v$ such that each $\alpha_i=r\circ (s|_{Z_{\sigma(i)}})^{-1}$ on $W$. 
\end{lemma}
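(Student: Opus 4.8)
The plan is to reduce the statement to pointwise linear algebra in the fibre over $v$, using that $h(v)>0$ for $v$ in the (open) support, and then to propagate the resulting identity across a whole neighbourhood by continuity together with a relation extracted from hypothesis~(3). First I would unwind hypothesis~(3). Fix $i$ and an edge $e$ with $g_i(e)\neq 0$. Choosing any $\tilde a\in C(E^0)$ with $\tilde a|_{\supp(h)}=a\circ\alpha_i$ (which exists by the Tietze extension theorem) and evaluating $a\cdot g_i=g_i\cdot\tilde a$ at $e$ gives $a(r(e))g_i(e)=g_i(e)\tilde a(s(e))$, hence $a(r(e))=\tilde a(s(e))$. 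Since $g_i(e)\neq 0$ forces $h(s(e))\geq|g_i(e)|^2>0$ by~(1), we have $s(e)\in\supp(h)$, so $\tilde a(s(e))=a(\alpha_i(s(e)))$. As $a\in C(E^0)$ is arbitrary and $C(E^0)$ separates points of the compact Hausdorff space $E^0$, this yields the relation
\[
r(e)=\alpha_i(s(e))\qquad\text{whenever }g_i(e)\neq 0,
\]
which I refer to below as $(\dagger)$.

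Next I would pin down the cardinality $|E^1v|$ and the bijection. Fix $w$ with $h(w)>0$. By~(1) the vectors $(g_i(f))_{f\in E^1w}\in\mathbb{C}^{E^1w}$, $i=1,\dots,k$, are orthogonal of squared norm $h(w)>0$, so they are linearly independent and $k\le|E^1w|$. For the reverse inequality I would use~(2): suppose $0\neq\zeta\in\mathbb{C}^{E^1w}$ is orthogonal to every $(g_i(f))_f$; using the disjoint $s$-sections of Lemma~\ref{lemma:s-sectionslocalconstant} and a partition of unity, build $x\in X(E)$ with $x(f)=\zeta_f$ for $f\in E^1w$. Then $x\cdot h\in\overline{X(E)\cdot h}$, so by~(2) there are $\eta_n=\sum_i g_i\cdot a_i^{(n)}\to x\cdot h$ in $X(E)$. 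Since $\inner{g_j}{x\cdot h}(w)=h(w)\sum_f\overline{g_j(f)}\zeta_f=0$ and $\inner{g_j}{\eta_n}(w)=h(w)a_j^{(n)}(w)$, continuity of the inner product gives $a_j^{(n)}(w)\to 0$, whence $\inner{\eta_n}{\eta_n}(w)=h(w)\sum_i|a_i^{(n)}(w)|^2\to 0$. As the $X(E)$-norm dominates evaluation at each fibre, $(x\cdot h)(f)=\lim_n\eta_n(f)=0$ for $f\in E^1w$, i.e.\ $\zeta_f\,h(w)=0$, contradicting $\zeta\neq 0$. Hence the $(g_i(f))_f$ span $\mathbb{C}^{E^1w}$ and $k=|E^1w|$. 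Taking $w=v$, the family $\{(g_i(f))_{f\in E^1v}\}_i$ is an orthogonal basis of $\mathbb{C}^{E^1v}$, so $k=|E^1v|$, and Lemma~\ref{lemma:permutationofbasisvectors} supplies a bijection $\sigma:\{1,\dots,k\}\to E^1v$ with $g_i(\sigma(i))\neq 0$ for all $i$.

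Finally I would assemble the neighbourhood. Lemma~\ref{lemma:s-sectionslocalconstant} gives a neighbourhood $W_0$ of $v$ and disjoint $s$-sections $(Z_e)_{e\in E^1v}$ with $s(Z_e)=W_0$, $\sigma(i)\in Z_{\sigma(i)}$, and $E^1W_0=\bigsqcup_e Z_e$; write $c_e:=(s|_{Z_e})^{-1}\colon W_0\to Z_e$. Since $g_i$ is continuous and $g_i(\sigma(i))=g_i(c_{\sigma(i)}(v))\neq 0$, each set $\{w\in W_0:g_i(c_{\sigma(i)}(w))\neq 0\}$ is open and contains $v$; intersecting these over $i\le k$ with $\{h>0\}$ produces an open neighbourhood $W\subseteq\supp(h)$ of $v$. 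For $w\in W$, the edge $c_{\sigma(i)}(w)\in Z_{\sigma(i)}$ has source $w$ and satisfies $g_i(c_{\sigma(i)}(w))\neq 0$, so $(\dagger)$ gives $\alpha_i(w)=r(c_{\sigma(i)}(w))=r\circ(s|_{Z_{\sigma(i)}})^{-1}(w)$. This is exactly the claimed identity on $W$.

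I expect the main obstacle to be the reverse inequality $k\ge|E^1v|$ drawn from hypothesis~(2): one must manufacture a genuine element of $X(E)$ with prescribed fibre values and then control the approximation in the Hilbert-module norm, using that this norm dominates pointwise evaluation on each fibre. By contrast, the orthogonality bound and the relation $(\dagger)$ are immediate once~(1) and~(3) are unwound, and the positivity $h(v)>0$ is precisely what makes the pointwise basis argument—and hence the existence of $\sigma$—available; at points of the support where $h$ vanishes the conclusion can genuinely fail, so this positivity is essential.
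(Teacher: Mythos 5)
Your proposal is correct and follows essentially the same route as the paper's proof: unwind hypothesis~(3) via Tietze to get the pointwise identity $\alpha_i(s(e))=r(e)$ whenever $g_i(e)\neq 0$, use (1)~and~(2) to show $\{g_i|_{E^1v}\}$ is an orthogonal basis of $\ell^2(E^1 v)$ so that $k=|E^1v|$ and Lemma~\ref{lemma:permutationofbasisvectors} yields $\sigma$, then shrink the neighbourhood from Lemma~\ref{lemma:s-sectionslocalconstant} so that $g_i\circ(s|_{Z_{\sigma(i)}})^{-1}$ is nonvanishing and conclude. The only divergence is in how condition~(2) forces spanning: the paper pushes the closed span through the surjective, norm-decreasing restriction map $X(E)\cdot h\to\ell^2(E^1v)$ in one line, while you annihilate the orthogonal complement via an approximating sequence $\eta_n=\sum_i g_i\cdot a_i^{(n)}$ and fibrewise norm-domination --- a longer but equally valid implementation of the same idea.
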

 \begin{proof}
 Fix $v\in \operatorname{supp}^\circ(h)$. For $i\le k$, regard $g_i|_{E^1v}$ as an element of $\ell^2(E^1v)$. Then for $i,j \le k$,
\begin{align*}
\inner{g_i|_{E^1 v}}{g_j|_{E^1v}}
    &= \sum_{e\in E^1v}\overline{g_i|_{E^1v}(e)}g_j|_{E^1v}(e)= \inner{g_i}{g_j}(v)
    = \delta_{i,j} h(v).
\end{align*}
Since $h(v)\not= 0$ it follows that $\{g_i|_{E^1v} : i \le k\}$ is linearly independent. We claim that $\lsp\{g_i|_{E^1v}: i \le k\} = \ell^2(E^1v)$. For this, note that $\operatorname{res} : \xi \mapsto \xi|_{E^1v}$ is a norm-decreasing linear map from $X(E) \cdot h$ to $\ell^2(E^1v)$, and is surjective since $h(v)\not= 0$. So condition~(2) gives 
\begin{align*}
\ell^2(E^1v) 
    &\subseteq \operatorname{res}\big(\overline{\operatorname{span}}\{g_j\cdot a: j\leq k, a\in C(E^0)\}\big) \\
    &\subseteq \clsp\{a(v)g_i|_{E^1 v} : a \in C_0(E^0), i \le k\} 
    = \lsp\{g_i|_{E^1 v} : i \le k\}.
\end{align*}
Hence $|E^1 v|=k$, and so Lemma~\ref{lemma:permutationofbasisvectors} yields a bijection $\sigma:\{1,\ldots,k\}\to E^1 v$ such that $g_i(\sigma(i))\neq 0$ for all $i=1,\ldots,k$.

We claim that whenever $g_i(e) \not= 0$, the function $\alpha_i$ of~(3) satisfies $\alpha_i(s(e))=r(e)$. To see this, we prove the contrapositive. So suppose that $\alpha_i(s(e)) \not= r(e)$. By Tietze's theorem there exists $a \in C(E^0)$ such that $a(r(e))=1$ and $a(\alpha_i(s(e)))=0$. The function $a \circ \alpha_i|_{\supp{h}}$ is a continuous function on the compact set $\supp(h)$, so by Tietze's theorem there is a function $\tilde{a} \in C(E^0)$ that extends $a \circ \alpha_i|_{\supp{h}}$. Since $s(e) \in \supp(h)$ we have $\tilde{a}(s(e)) = a(\alpha_i(s(e)))$, and so condition~(3) gives
\[
    0 = g_i(e) a(\alpha_i(s(e))) 
      = (g_i \cdot \tilde{a})(e)
      = (a\cdot g_i)(e) = a(r(e))g_i(e) 
      = g_i(e),
\]
proving the claim.

Fix a neighborhood $W$ of $v$ and s-sections $Z_e$, $e\in E^1 v$ as in Lemma~\ref{lemma:s-sectionslocalconstant}. Since the $g_i$ are continuous and each $g_i(\sigma(i))\neq 0$, for each $i$ there is a neighbourhood $\sigma(i) \in E^1$ on which $g_i$ is everywhere nonzero. Shrinking the $Z_{\sigma(i)}$ and $W$ appropriately, we may therefore assume that $Z_{\sigma(i)}\subseteq \operatorname{supp}g_i$ for each $i$ and $W \subseteq \supp(h)$. Fix $w\in W$. Then $(s|_{Z_{\sigma(i)}})^{-1}(w) \in \operatorname{supp}g_i$ and by the claim above
$\alpha_i(w) = \alpha_i(s\circ (s|_{Z_{\sigma(i)}})^{-1}(w)) = r((s|_{Z_{\sigma(i)}})^{-1}(w))$ as needed.
\end{proof}

\begin{remark}
The combination of (1)~and~(2) of Lemma~\ref{lemma.principallemmatoprovetheorem} actually implies the stronger condition~($2'$) that $\overline{X(E)\cdot h} = \overline{\operatorname{span}}\{g_j\cdot a: j\leq k, a\in C(E^0)\}$. To see this, recall from the proof of \cite[Proposition~2.31]{raeburn1998morita} that each $x \in X(E)$ can be written as $x = y \cdot \langle y, y\rangle_{C_0(E^0)}$ where $y$ is the upper right-hand entry of the matrix $\big(\begin{smallmatrix} 0\; & x \\ x^* & 0\end{smallmatrix}\big)^{1/3}$. Applying this with $x = g_j$ we obtain 
\[
\langle y, y\rangle_{C_0(E^0)} = \langle g_j,  g_j\rangle_{C_0(E^0)}^{1/3} = h^{1/3} \in \overline{C_0(E^0)h},
\]
and so $g_j = y \cdot \langle y, y\rangle_{C_0(E^0)} \in X(E) \cdot \overline{C_0(E^0)h} \subseteq \overline{X(E) \cdot h}$. But since~(2) is easier to check than~($2'$) we have stated the lemma with the formally weaker hypothesis.
\end{remark}

 \begin{theorem}\label{theorem:localisomorphismoftopologicalgraphs}
 Let $E$ and $F$ be compact topological graphs. Suppose that $X(E)\cong X(F)$ as Hilbert bimodules. Then $E \cong_{\operatorname{loc}} F$.
 \end{theorem}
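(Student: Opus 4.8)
The plan is to manufacture, near each vertex of $E$, an explicit frame $(g_e)_{e\in E^1 v}$ together with a cutoff $h$ satisfying hypotheses (1)--(3) of Lemma~\ref{lemma.principallemmatoprovetheorem}, to transport this data across the given bimodule isomorphism, and then to read the local conjugacy off the conclusion of that lemma applied on the $F$ side. To begin, the coefficient part of an isomorphism $X(E)\cong X(F)$ is a $\ast$-isomorphism $\phi\colon C(E^0)\to C(F^0)$, which by Gelfand duality is implemented by a homeomorphism $\phi^0\colon E^0\to F^0$ with $\phi(a)=a\circ(\phi^0)^{-1}$; this is the homeomorphism demanded by Definition~\ref{definition.localisomorphism}. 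Write $\Phi\colon X(E)\to X(F)$ for the module part, so that $\Phi(a\cdot x\cdot b)=\phi(a)\cdot\Phi(x)\cdot\phi(b)$ and $\inner{\Phi(x)}{\Phi(y)}_{C(F^0)}=\phi(\inner{x}{y}_{C(E^0)})$.

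Fix $v\in E^0$ and set $k:=|E^1 v|$. Using Lemma~\ref{lemma:s-sectionslocalconstant}, choose a neighborhood $W$ of $v$ and disjoint s-sections $(Z_e)_{e\in E^1 v}$ with $s\colon Z_e\to W$ a homeomorphism and $E^1 W=\bigsqcup_e Z_e$. Pick $h\in C(E^0,[0,1])$ with $v\in\operatorname{supp}^\circ(h)$ and $\supp(h)\subseteq W$, and set $g_e:=\sqrt{h\circ s}\cdot\chi_{Z_e}$ (extended by $0$), which is continuous and lies in $X(E)$ since $\inner{g_e}{g_e}=h$. I then verify (1)--(3) by hand: disjointness of the $Z_e$ gives $\inner{g_e}{g_{e'}}=\delta_{e,e'}h$, which is (1); condition~(3) holds with $\alpha_e=r\circ(s|_{Z_e})^{-1}$ on $\supp(h)$; and for (2) one checks that for $x\in X(E)$ the element $x\cdot h$ is supported in $E^1 W$ and decomposes \emph{exactly} as $x\cdot h=\sum_e g_e\cdot a_e$, where $a_e$ is the extension by zero of $w\mapsto x\big((s|_{Z_e})^{-1}(w)\big)\sqrt{h(w)}$.

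Now transport the data. Put $h':=\phi(h)$ and $g'_e:=\Phi(g_e)$. The intertwining properties of $\Phi$ show that $(h',(g'_e)_e)$ satisfies (1)--(3) on $F$: (1) is immediate from $\inner{g'_e}{g'_{e'}}=\phi(\delta_{e,e'}h)=\delta_{e,e'}h'$; (2) holds because $\Phi$ carries $\overline{X(E)\cdot h}$ onto $\overline{X(F)\cdot h'}$ and the relevant span onto its image; and (3) holds with $\alpha'_e:=\phi^0\circ\alpha_e\circ(\phi^0)^{-1}$ on $\supp(h')=\phi^0(\supp(h))$. Applying Lemma~\ref{lemma.principallemmatoprovetheorem} to $F$ at the point $v':=\phi^0(v)\in\operatorname{supp}^\circ(h')$ then yields $|F^1 v'|=k$, a neighborhood $W'$ of $v'$ with s-sections $(Z'_f)_{f\in F^1 v'}$, and a bijection $\sigma\colon E^1 v\to F^1 v'$ (identifying the index set with $E^1 v$) such that $\alpha'_e=r_F\circ(s_F|_{Z'_{\sigma(e)}})^{-1}$ on $W'$.

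Finally I assemble the local homeomorphism. On $U:=W\cap(\phi^0)^{-1}(W')$, define $\phi^1_U\colon E^1 U\to F^1\phi^0(U)$ on each s-section by $\phi^1_U(f):=(s_F|_{Z'_{\sigma(e)}})^{-1}(\phi^0(s_E(f)))$ for $f\in Z_e$ with $s_E(f)\in U$. This is a homeomorphism, being a disjoint union of homeomorphisms $Z_e\to Z'_{\sigma(e)}$ over $U$ and $\phi^0(U)$. By construction $s_F\circ\phi^1_U=\phi^0\circ s_E$, while the range compatibility $r_F\circ\phi^1_U=\phi^0\circ r_E$ follows by combining $r_E=\alpha_e\circ s_E$ on $Z_e$, the lemma's identity $\alpha'_e=r_F\circ(s_F|_{Z'_{\sigma(e)}})^{-1}$, and the transformation rule $\alpha'_e=\phi^0\circ\alpha_e\circ(\phi^0)^{-1}$. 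I expect the main obstacle to be twofold: getting the hands-on verification of hypothesis~(2) right, so that $x\cdot h$ decomposes exactly over the s-sections with genuinely continuous coefficients $a_e$ (the square-root weight is what makes this delicate), and the bookkeeping that forces the range maps to match, which is exactly where the transformation rule for the $\alpha_e$ must interlock with the lemma's identification of $\alpha'_e$ as $r_F\circ(s_F|_{Z'})^{-1}$; the transport of the hypotheses across $\Phi$ and the homeomorphism property of $\phi^1_U$ are comparatively routine.
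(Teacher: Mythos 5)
Your proposal is correct and follows essentially the same route as the paper: the same square-root cutoff frame $(g_e)$ supported on disjoint s-sections, verification of hypotheses (1)--(3) of Lemma~\ref{lemma.principallemmatoprovetheorem}, transport of that data across the bimodule isomorphism, and assembly of the local homeomorphism $\phi^1_U$ from the lemma's identification of the $\alpha_e$ with $r\circ(s|_{Z})^{-1}$. The only (immaterial) difference is one of direction: the paper constructs the explicit data on the $F$ side at $\phi^0(v)$ and pulls it back to $E$ before invoking the lemma there, whereas you construct it at $v\in E^0$ and push it forward to $F$.
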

 \begin{proof}
Let $\theta=(\theta^0,\theta^1)$ be a bimodule isomorphism from $X(F)$ to $X(E)$, so  $\theta^1:X(F)\to X(E)$ and $\theta^0:C(F^0)\to C(E^0)$ preserve the bimodule structure. Let $\phi^0=\widehat{\theta^0}:E^0\to F^0$ be the Gelfand transform of $\theta^0$ which is a homeomorphism between $E^0$ and $F^0$. Fix $v\in E^0$. Applying Lemma~\ref{lemma:s-sectionslocalconstant} to $\phi^0(v)$ we obtain an open neighborhood $W'\subseteq F^0$ of $\phi^0(v)$ and s-sections $Z_e'$ for $e \in F^1\phi^0(v)$ such that $F^1W' = \bigsqcup_{e\in F^1\phi^0(v)}Z_e'$. Since $F^0$ is normal, Urysohn's lemma gives a function $h'\in C_c(W',[0,1])$ (in particular, the compact set $\supp(h)$ is contained in $W'$) such that $h'(\phi^0(v))=1$. For each $e\in F^1\phi^0(v)$, define $g_e'\in C_0(Z_e')\subseteq X(F)$ by $g_e'=\sqrt{h'\circ s|_{Z'_e}}$. Define $\alpha_e':\supp(h') \to F^0$ by $\alpha_e'=r_F\circ (s_F|_{Z_e'})^{-1}$. Direct computation shows that $h'$, the $g_e'$ and the $\alpha_e'$ satisfy condition~(1) of Lemma~\ref{lemma.principallemmatoprovetheorem}. To see that they also satisfy condition~(2), fix $\xi \in X(F) = C(F^1)$ and $e \in F^1\phi^0(v)$. If $f\in Z_e'$. Then
\begin{align*}(\xi\cdot h')(f) & = \xi((s_F|_{Z_e'})^{-1}(s_F(f)))\sqrt{h'(s_F(f))}\sqrt{h'(s_F(f))} \\ 
& = \Big(g'_e(f)\cdot\big(\xi\circ(s_F|_{Z'_e})^{-1} \sqrt{h'}\big)\Big)(f).
\end{align*} 

For each $e \in F^1\phi^0(v)$, let $a_e := \big(\xi\circ(s_F|_{Z_e'})^{-1} \sqrt{h'}\big) \in C_0(W') \subseteq C(F^0)$. Since the $Z_e'$ are disjoint and $\operatorname{supp}(\xi\cdot h') \subseteq F^1W' = \bigsqcup_{e\in F^1\phi^0(v)}Z_e'$, we can write $\xi\cdot h' = \sum_{e\in F^1\phi^0(v)} g_e' \cdot a_e$. This gives condition~(2). For condition~(3), fix $a' \in C(F^0)$ and suppose that $\tilde{a}' \in C(F^0)$ extends $a' \circ \alpha'_e$. We must show that $(a' \cdot g'_e)(f) = (g'_e \cdot \tilde{a}')(f)$ for all $f \in F^1$. We consider two cases. First suppose that $f \not\in \supp(g'_e)$, then $(a' \cdot g'_e)(f) = a'(r(f))g'_e(f) = 0 = g'_e(f) \tilde{a}'(s(f)) = (g'_e \cdot \tilde{a}')(f)$. Now suppose that $f \in \supp(g'_e)$. Then $f \in Z'_e$, which implies that $\alpha'_e(s(f)) = r(f)$; and $s(f) \in \supp(h')$, giving $\tilde{a}'(s(f)) = a'(\alpha'_e(s(f))) = a'(r(f))$. Hence $(a' \cdot g'_e)(f) = a'(r(f))g'_e(f) = \tilde{a}'(s(f))g'_e(f) = (g'_e \cdot \tilde{a}')(f)$.

Let $h:=\theta^0(h')=h'\circ \phi^0$ and for each $e\in F^1 \phi^0(v)$, let $g_e:=\theta^1(g_e')$ and $\alpha_e=(\phi^0)^{-1}\circ \alpha'_e\circ \phi^0$, defined on $(\phi^0)^{-1}(\supp(h')) = \supp(h) \subseteq E^0$. Now we claim that $h$, the $g_e$ and the $\alpha_e$ also satisfy (1)--(3) of Lemma~\ref{lemma.principallemmatoprovetheorem}. Since $(\theta^0,\theta^1)$ is a bimodule isomorphism, conditions (1)~and~(2) follow from straightforward calculations using that $h'$, the $g'_e$ and the $\alpha'_e$ satisfy (1)~and~(2). We show that condition~(3) holds. For this, fix $a \in C(E^0)$ and let $a' := (\theta^0)^{-1}(a) \in C(F^0)$. Suppose that $\tilde{a} \in C(E^0)$ extends $a \circ \alpha_e$. We claim that $\tilde{a}' := (\theta^0)^{-1}(\tilde{a})$ extends $a' \circ \alpha'_e|_{\supp(h')}$. To see this, note first that, by definition of $h$, we have $\supp(h) = (\phi^0)^{-1}(\supp(h'))$. Also, by definition of $\alpha_e$, we have $\alpha_e \circ (\phi^0)^{-1} = (\phi^0)^{-1} \circ \alpha'_e$. So for $w \in \supp(h')$,
\begin{align*}
(\theta^0)^{-1}(\tilde{a})(w) 
    &= \tilde{a}((\phi^0)^{-1}(w))
    = (a \circ \alpha_e)((\phi^0)^{-1}(w))\\
    &= (a \circ (\phi^0)^{-1}) \circ \alpha'_e(w)
    = a' \circ \alpha'_e(w).
\end{align*}
Consequently, condition~(3) for $h'$, $g'_e$ and $\alpha'_e$ gives $a' \cdot g'_e = g'_e \cdot \tilde{a}'$. Hence 
\begin{align*}
a\cdot g_e 
    &= \theta^1(a'\cdot g_e') 
     = \theta^1(g_e' \cdot \tilde{a}')
     = \theta^1(g'_e)\cdot \theta^0(\tilde{a}')
     = g_e \cdot \tilde{a}.
\end{align*} 
Thus condition~(3) holds for $h$, the $g_e$ and the $\alpha_e$.

As $h(v) = h'(\phi^0(v)) = 1$, we have $v\in \supp^\circ(h)$, so by Lemma \ref{lemma.principallemmatoprovetheorem}, there exists an open neighborhood $W \subseteq (\phi^0)^{-1}(W')$ of $v$ and s-sections $Z_f$, $f\in E^1v$ as in Lemma \ref{lemma:s-sectionslocalconstant}, and a bijection $\sigma : F^1\phi^0(v) \to E^1v$ such that $\alpha_e = r_E\circ (s_E|_{Z_{\sigma(e)}})^{-1}$ on $W$ for each $e\in F^1\phi^0(v)$.

Define $\phi_W^1: E^1 W\to F^1 \phi^0(W)$ by $\phi^1_W|_{Z_{\sigma(e)}}=(s_F|_{Z'_e})^{-1}\circ \phi^0\circ s_E$ for each $e\in F^1\phi^0(v)$. Since each $\phi^1_W|_{Z_{\sigma(e)}}$ is a homeomorphism and their images are disjoint, $\phi^1_W$ is a homeomorphism by the pasting lemma. We claim that
 $\phi^1_W$ satisfies the conditions necessary for local conjugacy, that is, $r_F\circ \phi_W^1=\phi^0\circ r_E|_{E^1 W}$  and $s_F\circ \phi_W^1=\phi^0\circ s_E|_{E^1 W}$. That $s_F\circ \phi_W^1=\phi^0\circ s_E|_{E^1 W}$ is by definition of $\phi^1_W$. Now, fix $\bar{f}\in E^1 W$. There exists a unique $f\in E^1 v$ such that $\bar{f}\in Z_{f}$. Since $\sigma$ is a bijection there is a unique $e\in F^1\phi^0(v)$ such that $\sigma(e)=f$, hence $\bar{f}\in Z_{\sigma(e)}$. Thus we have
 \begin{align*}
     \phi^0\circ r_E(\bar{f})&=\phi^0(r_E((s|_{Z_{\sigma(e)}})^{-1}(s_E(\bar{f}))))=\phi^0(\alpha_e(s_E(\bar{f})))\\
     &= \phi^0((\phi^0)^{-1}\circ \alpha'_e\circ \phi^0(s_E(\bar{f})))=\alpha_e'(\phi^0(s_E(\bar{f})))\\
     &=r_F((s_F|_{Z_e'})^{-1}(\phi^0(s_E(\bar{f}))))=r_F((s_F|_{Z_e'})^{-1}(s_F(\phi^1_W(\bar{f})))).
 \end{align*}
Since $\bar{f}\in Z_{\sigma(e)}$, we have $\phi^1_W(\bar{f})=(s_F|_{Z_e'})^{-1}(\phi^0\circ s_E(\bar{f}))\in Z_e'$, therefore $\phi^0\circ r_E(\bar{f})=r_F(\phi^1_W(\bar{f}))$. Hence $E$ and $F$ are locally conjugate.
 \end{proof}

 \begin{remark}
     Theorem~\ref{theorem:localisomorphismoftopologicalgraphs} is a corollary of \cite[Theorem~4.5]{DavRoy}: if $E$ and $F$ have isomorphic bimodules, then the tensor algebras of these bimodules are also isomorphic, and so Davidson and Roydor's result implies that $E$ and $F$ are locally conjugate.
 \end{remark}

Our next corollary says that for topological graphs with totally disconnected vertex spaces, local conjugacy coincides with isomorphism. This is a consequence of the general theorem \cite[Theorem~5.5]{DavRoy}, which says that local conjugacy coincides with isomorphism whenever the covering dimension of the vertex-spaces of the graphs involved is at most~1. We have included a proof since the argument is simpler for zero-dimensional spaces.
 
 \begin{corollary}\label{corollary.isomorphismoftotallydisconnectedtopgraph}
 Let $E$ and $F$ be compact topological graphs and suppose that $E^0$ is totally disconnected. Let $\gamma^E$ and $\gamma^F$ be the gauge actions on \Toeplitz{E} and \Toeplitz{F} and let $M_E=\iota_{C(E^0)}(C(E^0))$ and $M_F=\iota_{C(F^0)}(C(F^0))$. Suppose that $(\Toeplitz{E},\gamma^E,M_E)\cong (\Toeplitz{F},\gamma^F,M_F)$. Then $E$ and $F$ are isomorphic as topological graphs.
 \end{corollary}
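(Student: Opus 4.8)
The plan is to first reduce to the purely topological statement that, for graphs with totally disconnected vertex space, local conjugacy implies genuine isomorphism, and then to exploit zero-dimensionality to globalise the local conjugacy. For the reduction, observe that the hypothesis $(\Toeplitz{E},\gamma^E,M_E)\cong(\Toeplitz{F},\gamma^F,M_F)$ together with the main reconstruction theorem of Section~\ref{section.A reconstruction result} yields an isomorphism $X(E)\cong X(F)$ of $C^*$-correspondences, and then Theorem~\ref{theorem:localisomorphismoftopologicalgraphs} gives $E\cong_{\operatorname{loc}}F$. So it remains to upgrade a local conjugacy $(\phi^0,(\phi^1_U)_U)$ as in Definition~\ref{definition.localisomorphism} to an actual isomorphism of topological graphs, using that $E^0$ (and hence, via the homeomorphism $\phi^0$, also $F^0$) is totally disconnected.

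The key structural input is that a compact, totally disconnected Hausdorff space is zero-dimensional, i.e. admits a basis of clopen sets. I would use this to manufacture a finite clopen partition of $E^0$ subordinate to the cover by local-conjugacy neighbourhoods. Concretely: for each $v\in E^0$ choose, via Definition~\ref{definition.localisomorphism}, a neighbourhood $U_v$ carrying a homeomorphism $\phi^1_{U_v}\colon E^1U_v\to F^1\phi^0(U_v)$ intertwining the range and source maps; shrink $U_v$ to a clopen neighbourhood $V_v\subseteq U_v$; extract a finite subcover $V_{v_1},\dots,V_{v_m}$ of the compact space $E^0$; and disjointify by setting $U_j:=V_{v_j}\setminus(V_{v_1}\cup\cdots\cup V_{v_{j-1}})$. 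Each $U_j$ is clopen, the $U_j$ are pairwise disjoint and cover $E^0$, and $U_j\subseteq U_{v_j}$.

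Because $s_E$ is continuous and each $U_j$ is clopen, the sets $E^1U_j=s_E^{-1}(U_j)$ form a finite clopen partition of $E^1$; likewise the $F^1\phi^0(U_j)$ partition $F^1$. I would then define $\phi^1$ piecewise by $\phi^1|_{E^1U_j}:=\phi^1_{U_{v_j}}|_{E^1U_j}$. Here one checks, using the source-intertwining identity $s_F\circ\phi^1_{U_{v_j}}=\phi^0\circ s_E$ and the bijectivity of $\phi^0$, that each restriction carries $E^1U_j$ exactly onto $F^1\phi^0(U_j)$ (an edge $e\in E^1U_{v_j}$ lies in $s_E^{-1}(U_j)$ iff $\phi^1_{U_{v_j}}(e)$ lies in $s_F^{-1}(\phi^0(U_j))$), so each piece is a homeomorphism onto its clopen target. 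Pasting over the finite clopen partition yields a homeomorphism $\phi^1\colon E^1\to F^1$, and the identities $r_F\circ\phi^1=\phi^0\circ r_E$ and $s_F\circ\phi^1=\phi^0\circ s_E$ hold globally because they hold on each piece. Hence $(\phi^0,\phi^1)$ is an isomorphism of topological graphs.

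I expect the only real subtlety to be the globalisation step itself: local conjugacy allows the local homeomorphisms $\phi^1_U$ to disagree on overlaps by a permutation of the edges lying over a vertex, and it is precisely this permutation-valued transition data that can obstruct a global isomorphism in positive covering dimension. Total disconnectedness removes the obstruction by letting us replace the overlapping cover with an honest clopen partition, so no compatibility of the $\phi^1_{U_{v_j}}$ across boundaries is ever required. The remaining verifications---that disjointifying preserves clopenness, that $s_E^{-1}$ of a clopen set is clopen, and that the restricted maps have the claimed codomains---are routine.
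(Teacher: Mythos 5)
Your proposal is correct and follows essentially the same route as the paper: reduce via the reconstruction theorem and Theorem~\ref{theorem:localisomorphismoftopologicalgraphs} to a local conjugacy, shrink the neighbourhoods to clopen sets, disjointify a finite subcover, and paste the restricted local homeomorphisms over the resulting clopen partition of $E^1$. Your explicit check that each restriction maps $E^1U_j$ exactly onto $F^1\phi^0(U_j)$ via the source-intertwining identity is a minor elaboration of a step the paper states without comment, not a different argument.
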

 \begin{proof}
 Theorems~\ref{theorem.isoKMScompacttopgraph} and \ref{theorem:localisomorphismoftopologicalgraphs} imply that $E$ and $F$ are locally conjugate.  Let $\phi^0 : E^0\to F^0$ be the map on vertices implementing the local conjugacy. Then for each $v\in E^0$ there exists a neighborhood $U_v$ of $v$ and a homeomorphism
    $\phi^1_{U_v}:E^1U_v\to F^1\phi^0(U_v)$
    such that $r_F\circ \phi_{U_v}^1=\phi^0\circ r_E|_{E^1U_v}$ 
    and $s_F\circ \phi_{U_v}^1=\phi^0\circ s_E|_{E^1U_v}$. Since $E^0$ is totally disconnected, we may suppose the $(U_v)$ are compact open. The $(U_v)_v$ cover $E^0$, so admit a finite subcover $(U_i)_{i=1}^n$. Put $V_1=U_1$ and let $V_i=U_i\setminus{\bigcup_{j=1}^{i-1} U_j}$ for $i\geq 2$. Then the $V_i$ are mutually disjoint compact open sets that cover $E^0$ and we can define the homeomorphisms $\phi^1_{V_i}:E^1 V_i\to F^1 \phi^0(V_i)$ by the restriction of $\phi^1_{U_i}$ on $E^1V_i$. Note that $(E^1V_i)_i$ are mutually disjoint open sets that cover $E^1$ and, similarly, $(F^1\phi^0(V_i))_i$ are mutually disjoint open sets that cover $F^1$ . We define $\phi^1:E^1\to F^1$ by $\phi^1|_{E^1 V_i}=\phi^1_{V_i}$. By the pasting lemma $\phi^1$ is continuous and bijective, thus a homeomorphism. It only remains to verify that $r_F\circ \phi^1=\phi^0\circ r_E$ and $s_F\circ \phi^1=\phi^0\circ s_E$. We show this only for the range map, since it is analogous for the source map. If $f\in E^1$, then $f\in E^1V_i$ for some $i=1,\ldots,n$ and hence $(r_F\circ \phi^1) (f)=r_F(\phi^1_{V_i}(f))=(\phi^0\circ r_E)(f)$.
 \end{proof}

\begin{corollary}
Let $E$ and $F$ be compact topological graphs. If $X(E)\cong X(F)$ as Hilbert bimodules, then $E \cong F$ as discrete directed graphs, via an isomorphism that implements a homeomorphism $E^0 \cong F^0$.
\end{corollary}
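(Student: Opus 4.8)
The plan is to upgrade the local conjugacy supplied by Theorem~\ref{theorem:localisomorphismoftopologicalgraphs} to a genuine bijection of edge sets respecting range and source, while forgoing any continuity of the edge map (which, as Section~\ref{section.example} shows, cannot be arranged in general). Since $X(E) \cong X(F)$ as Hilbert bimodules, Theorem~\ref{theorem:localisomorphismoftopologicalgraphs} gives $E \cong_{\operatorname{loc}} F$: there is a homeomorphism $\phi^0 : E^0 \to F^0$ and, for each $v \in E^0$, a neighbourhood $U_v$ of $v$ together with a homeomorphism $\phi^1_{U_v} : E^1 U_v \to F^1 \phi^0(U_v)$ satisfying $r_F \circ \phi^1_{U_v} = \phi^0 \circ r_E|_{E^1 U_v}$ and $s_F \circ \phi^1_{U_v} = \phi^0 \circ s_E|_{E^1 U_v}$.

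First I would isolate the fibrewise data. Fix $v \in E^0$. Because $v \in U_v$ we have $E^1 v = s_E^{-1}(v) \subseteq E^1 U_v$, and the source identity $s_F \circ \phi^1_{U_v} = \phi^0 \circ s_E$ together with injectivity of $\phi^0$ gives $(\phi^1_{U_v})^{-1}(F^1 \phi^0(v)) = E^1 v$. Hence the restriction $\beta_v := \phi^1_{U_v}|_{E^1 v}$ is a bijection $E^1 v \to F^1 \phi^0(v)$, and the range identity gives $r_F \circ \beta_v = \phi^0 \circ r_E|_{E^1 v}$.

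Next I would assemble these fibrewise bijections. Since each edge has a unique source, we have disjoint decompositions $E^1 = \bigsqcup_{v \in E^0} E^1 v$ and, as $\phi^0$ is a bijection, $F^1 = \bigsqcup_{v \in E^0} F^1 \phi^0(v)$. Defining $\phi^1 : E^1 \to F^1$ by $\phi^1|_{E^1 v} = \beta_v$ therefore yields a well-defined bijection. By construction $s_F \circ \phi^1 = \phi^0 \circ s_E$, and since each $\beta_v$ intertwines the range maps we obtain $r_F \circ \phi^1 = \phi^0 \circ r_E$. Thus $(\phi^0, \phi^1)$ is an isomorphism of discrete directed graphs implementing the homeomorphism $\phi^0 : E^0 \cong F^0$.

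There is essentially no obstacle beyond recognising what must be proved: the fibrewise bijections $\beta_v$ come from different local homeomorphisms and in general fail to glue to a continuous map on $E^1$, so no topological-graph isomorphism can be expected. Once one accepts that the discrete conclusion asks only for a set bijection commuting with $r$ and $s$, the partition of $E^1$ into source fibres carries out the gluing automatically, the range and source conditions holding fibrewise by construction.
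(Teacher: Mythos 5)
Your proof is correct and follows essentially the same route as the paper: both invoke Theorem~\ref{theorem:localisomorphismoftopologicalgraphs} to obtain a local conjugacy and then extract a discrete isomorphism from the local homeomorphisms fibrewise over the source map. The only (cosmetic) difference is that the paper concludes by counting $|wE^1v|=|\phi^0(w)F^1\phi^0(v)|$ for all pairs $v,w$, whereas you explicitly glue the restrictions $\beta_v$ over the partition $E^1=\bigsqcup_v E^1v$, which is if anything slightly more constructive.
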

\begin{proof}
By Theorem~\ref{theorem:localisomorphismoftopologicalgraphs}, $E\cong_{\operatorname{loc}} F$. Hence there is a homeomorphism $\phi^0:E^0\to F^0$ as in Definition~\ref{definition.localisomorphism}. Fix $v\in E^0$. There is a neighborhood $U$ of $v$ and a homeomorphism $\phi^1_U : E^1 U \to F^1 \phi^0(U)$ satisfying equations \eqref{eq.localisomorphismdefinition}. Hence, for $w\in E^0$ we have $\phi^1_U(wE^1v)=\phi^0(w)F^1\phi^0(v)$. Since $\phi_U^1$ is a bijection, $|wE^1v|=|\phi^0(w)F^1\phi^0(v)|$. Since $v,w$ were arbitrary, it follows that there is a range-and-source-preserving bijection $\phi^1 : E^1 \to F^1$ such that $(\phi^0, \phi^1)$ is an isomorphism of discrete graphs.
\end{proof}

\section{Example}\label{section.example}
In this section we describe two nonisomorphic topological graphs whose graph correspondences are isomorphic (so that, in particuar, the graphs are locally conjugate). This proves that a generalization of Theorem~\ref{theorem:localisomorphismoftopologicalgraphs} to arbitrary compact topological graphs is not possible. 

Examples of locally conjugate local homeomorphisms that are not conjugate (so that the associated topological graphs are not isomorphic) appear in \cite[Example~3.18]{DavKat}. Furthermore, since the graphs described below have covering dimension equal to~1, once we have established that they are locally conjugate, we could deduce that their Hilbert modules are isomorphic from \cite[Theorem~5.5]{DavRoy}. However, as our example is explicit and we are able to describe an explicit isomorphism of the Hilbert modules of the topological graphs involved, we present the details.

Let $E^0=F^0=F^1=\mathbb{T}$, and let $E^1=\mathbb{T}\times\{0,1\}$. Define range and source maps in $E$,$F$ by $r_E(z,j)=s_E(z,j)=z$ and $r_F(z)=s_F(z)=z^2$. Then $E$ and $F$ are not isomorphic as topological graphs: $F^1$ is connected and $E^1$ is not. We show that $X(E)$ is isomorphic to $X(F)$. For every $f\in C(E^1)$ and $z\in \mathbb{T}$,
\[\inner{f}{f}(z)=\sum_{s_E(w,j)=z} |f(w,j)|^2=|f(z,1)|^2+|f(z,2)|^2,\]
therefore $\inner{f}{f}\in C(E^0)$. Hence \eqref{eq:defX(E)graphcorrespondence} gives $X(E)=C(E^1)$ and, analogously, $X(F)=~C(\mathbb{T})$.
Define $\psi:C(E^1)\to C(\mathbb{T},\mathbb{C}^2)$ by
\[\psi(h)(z)=\left(h(z,1),h(z,2)\right)\quad \text{for all }h\in C(E^1), z\in \mathbb{T}.\]
This map is an isomorphism of vector spaces, with inverse given by
\[\phi(g)(z,j)=\pi_j(g(z))\quad g\in C(\mathbb{T},\mathbb{C}^2), (z,j)\in \mathbb{T}\times \{0,1\}.\]
The map $\psi$ induces a Hilbert $C(\mathbb{T})$-bimodule structure on $C(\mathbb{T},\mathbb{C}^2)$ as follows. Writing $\inner{\cdot}{\cdot}_{\mathbb{C}}$ for the inner product in $\mathbb{C}^2$ (linear in the second component), for $g_1,g_2\in C(\mathbb{T},\mathbb{C}^2)$ and $f\in C(\mathbb{T})$,
\begin{align*}
    (f\cdot g_1)(z)&:=\psi(f\cdot \phi(g_1))(z)=f(z)g_1(z),\\
    (g_1\cdot f)(z)&:=\psi(\phi(g_1)\cdot f)=g_1(z)f(z), \quad \text{and}\\
    \inner{g_1}{g_2}(z)&:=\inner{\psi^{-1}(g_1)}{\psi^{-1}(g_2)}(z)=\inner{g_1(z)}{g_2(z)}_{\mathbb{C}}.
\end{align*}
Hence $\psi$ is an isomorphism $C(E^1)\to C(\mathbb{T},\mathbb{C}^2)$.

For $t\in [0,2\pi]$, define

\[\mathcal{U}_t=\begin{pmatrix}
e^{it/2}\cos(t/4) & -e^{it/2}\sin(t/4)\\
\sin (t/4) & \cos (t/4)
\end{pmatrix}\in \mathcal{U}(\mathbb{C}^2).\]
For $f\in C(\mathbb{T})$ define $\widetilde{\rho}(f):[0,2\pi]\to \mathbb{C}^2$ by
\[\widetilde{\rho}(f)(t)=\mathcal{U}_{t}\begin{pmatrix}
f(e^{it/2}) \\
f(-e^{it/2})
\end{pmatrix}.\]
Then $\widetilde{\rho}$ is continuous, and 
\[\widetilde{\rho}(f)(0)=\begin{pmatrix}
f(1) \\
f(-1)
\end{pmatrix}=\begin{pmatrix}
0& 1 \\
1& 0
\end{pmatrix}\begin{pmatrix}
f(-1) \\
f(1)
\end{pmatrix}=\widetilde{\rho}(f)(2\pi).\]
Hence $\widetilde{\rho}$ determines a continuous function $\rho(f)\in C(\mathbb{T},\mathbb{C}^2)$ by $\rho(f)(e^{it})=\widetilde{\rho}(f)(t)$. Identifying $X(F)$ with $C(\mathbb{T})$ as above, we obtain maps,
\[\begin{tikzcd}
X(F) \arrow[r, "\rho"] & {C(\mathbb{T},\mathbb{C}^2)} \arrow[r, "\phi"] & X(E).
\end{tikzcd}\]
We show that $\rho$ is an isomorphism of Hilbert bimodules. We start by proving that it is isometric. For $f_1,f_2\in C(\mathbb{T})$, we calculate
\begin{align*}
    \inner{\rho (f_1)}{\rho (f_2)}(e^{it}) &=\inner{\rho(f_1)(e^{it})}{\rho(f_2)(e^{it})}_{\mathbb{C}}\\
    &=\inner{\mathcal{U}_t\begin{pmatrix}
f_1(e^{it/2}) \\
f_1(-e^{it/2})
\end{pmatrix}}{\mathcal{U}_t\begin{pmatrix}
f_2(e^{it/2}) \\
f_2(-e^{it/2})
\end{pmatrix}}_{\mathbb{C}}\\
&=\inner{\begin{pmatrix}
f_1(e^{it/2}) \\
f_1(-e^{it/2})
\end{pmatrix}}{\begin{pmatrix}
f_2(e^{it/2}) \\
f_2(-e^{it/2})
\end{pmatrix}}_{\mathbb{C}}\\
&=\inner{f_1}{f_2}(e^{it}).
\end{align*}
To see that $\rho$ preserves the bimodule structure, fix $f\in X(F)$ and $a\in C(\mathbb{T})$. Then 
\begin{align*}
 \rho(f\cdot a)(e^{it})&=\mathcal{U}_{t}\begin{pmatrix}
f\cdot a(e^{it}) \\
f\cdot a(-e^{it})
\end{pmatrix}= \mathcal{U}_{t}\begin{pmatrix}
f(e^{it})a(s_F(e^{i t})) \\
f(-e^{it}) a(s_F(-e^{it}))
\end{pmatrix}\\
&=\mathcal{U}_{t}\begin{pmatrix}
f(e^{i\pi t}) \\
f(-e^{i\pi t})
\end{pmatrix}a(e^{i t})=\rho(f)(e^{it})a(e^{it})=(\rho(f)\cdot a) (e^{it}).
\end{align*}
Since the left and right actions on each of $X(E)$ and $X(F)$ coincide, $\rho(a\cdot f)=a\cdot\rho(f)$ as well. It remains to prove that $\rho$ is surjective. For that, we use the Stone-Weierstrass theorem for Banach bundles \cite[Corollary 4.3]{gierz2006bundles}. We must show that $\rho(C(\mathbb{T}))\subset C(\mathbb{T},\mathbb{C}^2)$, when viewed as sections on the canonical bundle associated with $C(\mathbb{T},\mathbb{C}^2)$, is fiberwise dense in that bundle. First, we define some sets. For $z\in \mathbb{T}$, let
\begin{align*}
    I_z&\coloneqq\{f\in C(\mathbb{T}): f(z)=0\},\\
    C(\mathbb{T},\mathbb{C}^2)_z&\coloneqq \{h\in C(\mathbb{T},\mathbb{C}^2): \inner{h}{h}\in I_z\}=\{h\in C(\mathbb{T},\mathbb{C}^2): h(z)=0\}.
\end{align*}
For $h\in C(\mathbb{T},\mathbb{C}^2)$, the associated section of the bundle $\bigsqcup_{z\in \mathbb{T}}C(\mathbb{T},\mathbb{C}^2)/C(\mathbb{T},\mathbb{C}^2)_z$ is given by $\hat{h}:z\mapsto h+C(\mathbb{T},\mathbb{C}^2)_z$. The map $h\mapsto \hat{h}$ is an isometric isomorphism. For $z\in\mathbb{T}$ write $\operatorname{ev}_z:C(\mathbb{T},\mathbb{C}^2)\to C(\mathbb{T},\mathbb{C}^2)/C(\mathbb{T},\mathbb{C}^2)_z$ for the ``evaluation'' map
\[\operatorname{ev}_z(h)=h+C(\mathbb{T},\mathbb{C}^2)_z\quad h\in C(\mathbb{T},\mathbb{C}^2).\]
We must show that $\operatorname{ev}_z(\rho(C(\mathbb{T})))$ is dense in $C(\mathbb{T},\mathbb{C}^2)/C(\mathbb{T},\mathbb{C}^2)_z$ for each $z\in \mathcal{T}$. For this, fix $z=e^{it}\in \mathbb{T}$ and $h+C(\mathbb{T},\mathbb{C}^2)_z\in C(\mathbb{T},\mathbb{C}^2)/C(\mathbb{T},\mathbb{C}^2)_z$. We show that there exists $f\in C(\mathbb{T})$ such that $h-\rho(f)\in C(\mathbb{T},\mathbb{C}^2)_z$ or, in other words, $h(e^{it})=\rho(f)(e^{it})$. First, we solve the equation,
\begin{equation}\label{eq.exampleequation}
   \mathcal{U}_{t}\begin{pmatrix}
x \\
y
\end{pmatrix}=h(e^{it}).
\end{equation}
Since $\mathcal{U}_t$ is unitary, there is a unique solution $(x_0,y_0)\in \mathbb{C}^2$. Let $A=\{-e^{it/2},e^{it/2}\}$ and define $\widetilde{f}:A\to \mathbb{C}$ by $\widetilde{f}(e^{it/2})=x_0$ and $\widetilde{f}(-e^{it/2})=y_0$. By Tietze's extension Theorem there exists $f\in C(\mathbb{T})$ such that  $f(e^{it/2})=x_0$ and $f(-e^{it/2})=y_0$. This implies that $\rho(f)(e^{it})=h$. Therefore, by Corollary 4.3 of \cite{gierz2006bundles} $\rho(C(\mathbb{T}))$ is dense in $C(\mathbb{T},\mathbb{C}^2)$ and since the former is closed, they are equal.

\section{A cohomological obstruction}\label{sec.epilogue}

We finish with a discussion of the relationship between isomorphism of topological graphs, isomorphism of the associated $C^*$-algebraic triples, isomorphism of their graph bimodules, and local conjugacy of topological graphs. To begin this discussion, we make an observation about the structure of graph bimodules in terms of the description of vector bundles using local trivialisations and transition functions. This is essentially a rephrasing of Kaliszewski, Patani and Quigg's characterisation of graph modules as those admitting a continuous choice of basis \cite{KaliPataQuigg}.

\begin{proposition}\label{prp:permutation transitions}
 Let $E$ be a compact topological graph. Then there exists a local trivialisation of the canonical Hilbert bundle $\mathcal{E}$ associated to the Hilbert $C(E^0)$-module $X(E)$ whose transition functions take values in the permutation matrices.
\end{proposition}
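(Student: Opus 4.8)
The goal is to construct a local trivialization of the canonical Hilbert bundle $\mathcal{E}$ associated to $X(E)$ whose transition functions are permutation matrices. The plan is to build the local trivializations directly out of the s-section structure provided by Lemma~\ref{lemma:s-sectionslocalconstant}, which is exactly the tool that encodes how edges sit above vertices.

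First I would fix $v \in E^0$ and invoke Lemma~\ref{lemma:s-sectionslocalconstant} to produce an open neighborhood $W$ of $v$ together with disjoint open s-sections $(Z_e)_{e \in E^1 v}$ satisfying $s(Z_e) = W$ and $E^1 W = \bigsqcup_{e \in E^1 v} Z_e$. Writing $k = |E^1 v|$ (constant on $W$ by the first part of the lemma), I would then define, for each $e \in E^1 v$, a local section of the bundle by taking the characteristic-section $g_e \in C_c(Z_e) \subseteq X(E)$ supported in $Z_e$ and equal to $1$ on the relevant piece — more precisely, after restricting to $W$ I would use sections of the form $g_e = \chi_{Z_e} \cdot (\text{something supported on } W)$, viewing $s|_{Z_e} : Z_e \to W$ as a homeomorphism. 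Over a point $w \in W$, the fiber of $\mathcal{E}$ is $X(E)/J_w \cong \ell^2(E^1 w)$, and the key computation (already appearing in the proof of Lemma~\ref{lemma.principallemmatoprovetheorem}, where $\operatorname{res}: \xi \mapsto \xi|_{E^1 w}$ is used) shows that the images $\widehat{g_e}(w)$ form an orthonormal basis of the fiber because $\langle g_e, g_{e'}\rangle_{C(E^0)}(w) = \delta_{e,e'}$ on $W$. This gives a fiberwise-orthonormal frame, hence a local trivialization $\Phi_W : \mathcal{E}|_W \to W \times \mathbb{C}^k$.

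The crux of the argument is to check that on an overlap $W \cap W'$ the transition function is a permutation matrix. Here the point is geometric rather than analytic: the sections $g_e$ and $g_{e'}$ (from the two charts) each have support inside a single s-section, and over a vertex $w$ in the overlap, each basis vector $\widehat{g_e}(w)$ is supported at the unique edge $f \in E^1 w$ lying in $Z_e$. Thus the change-of-basis matrix sending the frame $(\widehat{g_e}(w))$ to the frame $(\widehat{g'_{e'}}(w))$ merely records which s-section of the second cover contains the edge picked out by the first — it sends each basis vector to another single basis vector (up to the unimodular scalar built into the definition of the $g$'s, which I would normalize away so that the entries are exactly $0$ or $1$). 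This is precisely a permutation matrix. The main obstacle is bookkeeping: ensuring the $g_e$ are chosen so that the transition entries are genuinely $\{0,1\}$-valued rather than merely supported on a single entry, which I handle by defining $g_e$ via the canonical homeomorphism $s|_{Z_e}^{-1}$ and a fixed choice of bump function on $W$, so that distinct charts assign value exactly $1$ to the corresponding edge.

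Finally I would remark that this recovers, and slightly refines, the Kaliszewski--Patani--Quigg description \cite{KaliPataQuigg}: a continuous choice of orthonormal basis is exactly a local trivialization, and the edge-to-s-section matching forces the transition functions into the permutation matrices. I expect the only delicate step to be the simultaneous shrinking of neighborhoods so that a common refinement of two s-section covers exists on which both frames are defined and the edge-matching is unambiguous; this follows from the openness of the $Z_e$ and local constancy of $w \mapsto |E^1 w|$, but it must be stated carefully.
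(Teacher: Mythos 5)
Your proposal is correct and follows essentially the same route as the paper: both use Lemma~\ref{lemma:s-sectionslocalconstant} to cover $E^0$ by open sets $U$ over which $E^1U$ decomposes into $s$-sections, identify the fibre of $\mathcal{E}$ over $w$ with $\ell^2(E^1w)$, take as frame the delta functions at the unique edge of each $s$-section over $w$, and observe that the transition functions merely record which $s$-section of the other chart contains each such edge, hence are permutation matrices. The only difference is packaging: the paper induces the trivialisation directly from the homeomorphism $\phi_i : E^1U_i \to U_i \times \{1,\dots,k(i)\}$ via the module isomorphism $\xi \mapsto \xi\circ\phi_i$ from $C_0(U_i,\mathbb{C}^{k(i)})$ onto $X(E)\cdot C_0(U_i)$, which sidesteps the bump-function and normalisation bookkeeping your characteristic-section construction requires.
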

\begin{proof}
By Lemma~\ref{lemma:s-sectionslocalconstant} there exists a finite open cover $\{U_i\}_{i\in F}$,  of the vertex space $E^0$, such that, for each $i\in F$ there exists $k(i) \in \mathbb{N}$ and $s$-sections $Z_j^{(i)}, j \le k(i)$ such that the source map restricts to a homeomorphism $Z_j^{(i)} \cong U_i$ for each $j$, and $E^1 U_i = \bigsqcup_{j=1}^{k(i)}Z_j^{(i)}$.

Fix $i\in F$ and for each $j \le k(i)$, define $\phi_i^j:Z_j^{(i)}\to U_i\times \{1,\ldots, k(i)\}$ by $\phi_i^j(e)=(s|_{Z_j^{(i)}}(e),j)$. Then each $\phi_i^j$ is a homeomorphism of $Z_j^{(i)}$ onto $U_i\times \{j\}$. Hence, by the pasting lemma, $\phi_i\coloneqq \bigsqcup_{j=1}^{k(i)}\phi_i^j : E^1 U_i \to U_i\times \{1,\ldots, k(i)\}$ is a homeomorphism.  

If $U_i\cap U_j\neq \emptyset$, then we may consider the composition $\phi_j\circ \phi_i^{-1}$ with domain $(U_i\cap U_j)\times \{1,\ldots,k(i)\}$. Let $(x,\ell)\in (U_i\cap U_j) \times \{1,\ldots,k(i)\}$. Then $\phi_i^{-1}(x,\ell) = (s|_{Z_\ell^{(i)}})^{-1}(x)$, and there exists a $m\in \{1,\ldots, k(j)\}$ such that $(s|_{Z_\ell^{(i)}})^{-1}(x)\in Z_m^{(j)}$, since otherwise $(s|_{Z_\ell^{(i)}})^{-1}(x)\notin E^1 U_j$,  contradicting $x\in U_j$. This $m$ is unique because the $Z_n^{(j)}$, $n\in \{1,\ldots, k(j)\}$ are disjoint. Hence
\[
    \phi_j\circ \phi_i^{-1}(x,\ell) = \phi_j((s|_{Z_\ell^{(i)}})^{-1}(x)) = (x,m).
\]
So there is a function $\sigma_x^{j,i} : \{1, \dots, k(i)\} \to \{1, \dots, k(j)\}$ such that $(s|_{Z_\ell^{(i)}})^{-1}(x)\in Z^{(j)}_{\sigma_x^{j,i}(\ell)}$ for all $\ell\in \{1,\ldots, k(i)\}$.

We claim that $\sigma_x^{j,i}$ is a bijection. Indeed, if $\sigma_x^{j,i}(\ell)=\sigma_x^{j,i}(\ell')$, then $\phi_j\circ\phi_i^{-1}(x,\ell) = (x,\sigma_x^{j,i}(\ell)) = (x,\sigma_x^{j,i}(\ell')) = \phi_j\circ\phi_i^{-1}(x,\ell')$. Since $\phi_j$ and $\phi_i$ are injective, $\ell=\ell'$, thus $\sigma_x^{j,i}$ is injective.

We now claim that $\sigma^{j,i}_x$ is surjective. For this, fix $m\in \{1,\ldots,k(j)\}$. Then $\phi_j^{-1}(x,m) = (s|_{Z_m^{(j)}})^{-1}(x) \in  E^1(U_i\cap U_j) \subseteq E^1 U_i$. Thus, there is a unique $m'\in \{1,\ldots, k(i)\}$ such that $\phi_j^{-1}(x,m)\in Z_{m'}^{(i)}$, so $\phi_j^{-1}(x,m) = (s|_{Z_{m'}^{(i)}})^{-1}(x)$. Hence
\[
\phi_j\circ \phi_i^{-1}(x,m')
    =\phi_j((s|_{Z_{m'}^{(i)}})^{-1}(x))
    =\phi_j\big(\phi_j^{-1}(x,m)\big)
    =(x,m).
\]
This implies that $\sigma^{j,i}_x(m')=m$ and $\sigma^{j,i}_x$ is a bijection. In particular, $k(i)=k(j)$.

Recall the canonical Hilbert bundle $\mathcal{E}$ associated to the bimodule $X(E)$ discussed in the preliminaries. For each $i$ the map $\phi_i$ induces a right-Hilbert $C_0(U_i)$-module isomorphism $\phi_i^* : C_0(U_i\times \{1,\ldots, k(i)\}) \cong C_0(U_i, \mathbb{C}^{k(i)}) \to X(E) \cdot C_0(U_i)$ by $\phi_i^*(\xi) = \xi \circ \phi_i$. This induces a vector-bundle isomorphism $\psi_i : U_i \times \mathbb{C}^{k(i)} \cong \mathcal{E}|_{U_i}$ satisfying $\psi_i(x, e_m) = \phi_i^{-1}(x, m)$ for all $x \in U_i$ and $m \le k(i)$.

The maps $\psi_i, i \in F$ are a local trivialisation of $\mathcal{E}$, and for $x \in U_i \cap U_j$ and $\ell \le k(i)$, the transition function $\psi_j^{-1} \circ \psi_i$ satisfies $\psi_j^{-1} \circ \psi_i(x, e_\ell) = \psi_j^{-1}(\phi_i^{-1}(x,\ell)) = \psi_j^{-1}(\phi_j^{-1}(\sigma_x^{j,i}(\ell))) = (x, e_{\sigma_x^{j,i}(\ell)})$. That is, the matrix implementing $\psi_j^{-1} \circ \psi_i$ in the fibre over $x$ is precisely the permutation matrix corresponding to $\sigma_x^{j,i}$. 
\end{proof}

\begin{corollary}\label{cor:cts basis cohomology}
Let $K$ be a second-countable compact Hausdorff space, and let $X$ be a right-Hilbert $C(K)$-module. The following are equivalent.
\begin{enumerate}
    \item $X$ is isomorphic, as a right-Hilbert module, to the graph module of a compact topological graph $E$ with $E^0 \cong K$.
    \item $X$ admits a continuous choice of finite orthonormal bases in the sense of Kaliszewksi--Quigg--Patani \cite{KaliPataQuigg}.
    \item The canonical vector bundle $\mathcal{E}$ over $K$ associated to $X$ is finite rank and admits a local trivialisation whose transition functions take values in the permutation matrices.
\end{enumerate}
\end{corollary}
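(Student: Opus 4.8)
The plan is to prove the cycle of implications $(1)\Rightarrow(3)\Rightarrow(2)\Rightarrow(1)$. Since the equivalence $(1)\Leftrightarrow(2)$ is essentially the Kaliszewski--Patani--Quigg characterisation of graph modules \cite{KaliPataQuigg}, the genuinely new content is the link to the permutation-matrix condition~(3), which I would supply using Proposition~\ref{prp:permutation transitions} in one direction and an explicit patching argument in the other.

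For $(1)\Rightarrow(3)$, suppose $X\cong X(E)$ as right-Hilbert modules for a compact topological graph $E$ with $E^0\cong K$. Then the associated canonical bundles are isomorphic, so I may work with $\mathcal{E}=\mathcal{E}_{X(E)}$ directly, and Proposition~\ref{prp:permutation transitions} furnishes exactly the desired local trivialisation with permutation-matrix transition functions. Finite rank is automatic: the fibre over $v$ has dimension $|E^1v|$, which by Lemma~\ref{lemma:s-sectionslocalconstant} is locally constant and hence, by Remark~\ref{rem:maxexist} and compactness of $E^0$, bounded by $\max_{v\in E^0}|E^1v|<\infty$.

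For $(3)\Rightarrow(2)$, let $\{\psi_i:U_i\times\mathbb{C}^{k(i)}\to\mathcal{E}|_{U_i}\}_{i\in F}$ be a local trivialisation by Hilbert-bundle isomorphisms whose transition functions are, over each $x\in U_i\cap U_j$, the permutation matrix of a bijection $\sigma_x^{j,i}$. For $m\le k(i)$ set $s_m^{(i)}(x):=\psi_i(x,e_m)$; these are continuous sections whose values at each $x\in U_i$ form an orthonormal basis of the fibre $\mathcal{E}_x$, since $\psi_i$ is fibrewise unitary. On overlaps the identity $\psi_j^{-1}\circ\psi_i(x,e_m)=(x,e_{\sigma_x^{j,i}(m)})$ gives $s_m^{(i)}(x)=s_{\sigma_x^{j,i}(m)}^{(j)}(x)$, so the local orthonormal frames agree as sets on overlaps. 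The union $B:=\bigcup_{i,m}s_m^{(i)}(U_i)\subseteq\mathcal{E}$ is therefore well defined and meets each fibre in an orthonormal basis, and over each $U_i$ the restriction of the bundle projection $p$ to $B$ is the union of the pairwise disjoint images of the $k(i)$ sections $s_m^{(i)}$. Hence $p|_B:B\to K$ is a local homeomorphism with finite fibres, that is, a continuous choice of finite orthonormal bases in the sense of \cite{KaliPataQuigg}.

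Finally, $(2)\Rightarrow(1)$ is the Kaliszewski--Patani--Quigg theorem \cite{KaliPataQuigg}: concretely, one sets $E^0=K$, $E^1=B$ with source map $s=p|_B$ and range map $r=s$; the total space $B$ is compact because $K$ is compact and the fibres are finite and locally constant in number, so $E$ is a compact topological graph, and expanding sections of $\mathcal{E}$ in the orthonormal basis $B$ yields $X\cong X(E)$, with second countability of $K$ invoked where \cite{KaliPataQuigg} requires it. I expect the main obstacle to sit in $(3)\Rightarrow(2)$: one must check not merely that the local frames agree fibrewise but that $B$, with the subspace topology inherited from $\mathcal{E}$, is genuinely \'etale over $K$---that is, that the permutation-valued transition functions force the continuous frames to glue into a global local homeomorphism. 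Verifying this amounts to confirming that the separation of distinct orthonormal vectors within a fibre, together with continuity of the $\psi_i$, makes each $s_m^{(i)}$ a homeomorphism onto an open piece of $B$.
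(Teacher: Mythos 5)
Your proposal is correct and follows essentially the same route as the paper: $(1)\Leftrightarrow(2)$ via the Kaliszewski--Patani--Quigg theorem, $(1)\Rightarrow(3)$ via Proposition~\ref{prp:permutation transitions}, and $(3)\Rightarrow(2)$ by gluing the local orthonormal frames $x\mapsto\{\psi_i(x,e_1),\dots,\psi_i(x,e_{k(i)})\}$ into a global set $B\subseteq\mathcal{E}$ that is \'etale over $K$, exactly as in the paper's argument. The point you flag as the main obstacle --- that the images $\psi_i(U_i\times\{e_\ell\})$ are open in $B$ so that $p|_B$ is a local homeomorphism --- is handled in the paper at the same level of detail you indicate.
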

\begin{proof}
That (1)~and~(2) are equivalent follows from \cite[Theorem~6.4]{KaliPataQuigg}. Proposition~\ref{prp:permutation transitions} gives \mbox{(1)$\implies$(3)}. So it suffices to show that if $\mathcal{E}$ has a local trivialisation whose transition functions take values in permutation matrices, then it admits a global choice of orthonormal bases. For this, fix such a local trivialisation, say $\{U_i\}_{i \in F}$ is an open cover of $K$ and for each $i$, we have a bundle isomorphism $\psi_i : U_i \times \mathbb{C}^{k(i)} \to \mathcal{E}|_{U_i}$ so that the transition functions $\psi_i^{-1} \circ \psi_j$ are permutation-matrix valued. Fix $x \in K$. If $i,j \in F$ satisfy $x \in U_i \cap U_j$ then since $\psi_i^{-1} \circ \psi_j$ takes values in permutation matrices, there is a permutation $\sigma$ such that $\psi_i^{-1} \circ \psi_j(x, e_\ell) = (x, e_{\sigma(\ell)})$ for all $\ell\le k(i)$. Hence $\{\psi_i(x, e_1), \dots, \psi_i(x, e_{k(i)})\} = \{\psi_j(x, e_{\sigma(1)}), \dots, \psi_j(x, e_{\sigma({k(i)})})\} = \{\psi_j(x, e_1), \dots, \psi_j(x, e_{k(i)})\}$. So there is a well-defined set-valued map $\psi : K \to \mathcal{P}(\mathcal{E})$ such that, for each $i$, the restriction $\psi|_{U_i}$ is the map $x \mapsto \{\psi_i(x, e_1), \dots, \psi_i(x, e_{k(i)})\}$. The set $B := \bigcup_{x \in K} \psi(x)$ is a subset of $\mathcal{E}$. Each $B \cap \mathcal{E}_x = \psi(x)$ is an orthonormal basis for $\mathcal{E}_x$. For each $i$, the restriction $\pi_i$ of the bundle map to $B \cap \mathcal{E}|_{U_i}$ satisfies $\pi_i(\psi_i(x, e_\ell)) = x$ and for each $x,\ell$ the set $\psi_i(U_i \times \{e_\ell\}) \cong U_i \times \{e_\ell\}$ is an open neighbourhood of $(x, e_\ell)$ on which $\pi_i$ restricts to a homeomorphism. So the bundle map restricts to a local homeomorphism $B \to K$. That is $B$ is a continuous choice of basis.
\end{proof}

\begin{remark}
Our results give the following string of implications for a pair $E$ and $F$ of compact topological graphs:
\begin{align*}
E \cong F \stackrel{(1)}{\Longrightarrow}{}& {(\mathcal{T}C^*(E), \gamma^E, M_E) \cong (\mathcal{T}C^*(F), \gamma^F, M_F)} \\
   \Longleftrightarrow{}& X(E) \cong X(F) \\
   \stackrel{(2)}{\Longrightarrow}{}& E \cong_{\operatorname{loc}} F.
\end{align*}

Section~\ref{section.example} details a pair $E, F$ of topological graphs that have isomorphic modules (and so, in particular, are also locally conjugate) but are not isomorphic as topological graphs. Thus implication~(1) does not admit a converse.

Though we do not have a counterexample, it also seems unlikely that implication~(2) admits a converse due to the following cohomological considerations. 

Corollary~\ref{cor:cts basis cohomology} shows that graph modules of compact topological graphs, regarded purely as right modules, can be characterised as the finite-rank vector bundles that admit a local trivialisation whose transition functions take values in the permutation matrices. Any (right) graph module can be made into a graph bimodule by making the left action identical to the right action; this amounts to making all of the edges in the graph loops. 

Hence, the existence of a nontrivial vector bundle over a compact space whose transition functions are permutation matrices would provide a counterexample to the converse of implication~(2): given such a bundle $B$, say of rank $k$, Corollary~\ref{cor:cts basis cohomology} yields a topological graph $E$ whose graph module is the module of sections of $B$. By construction, the range and source maps on $E$ coincide, and each vertex of $E$ is the base of $k$ loop-edges. Hence $E$ is locally conjugate to the topological graph $F$ with $F^0 = E^0$, and $F^1 = E^0 \times \{1, \dots, k\}$ as a topological space, and with range and source maps given by $(v, i) \mapsto v$. Since the graph module of $F$ is the module of sections of a trivial bundle, it follows that $E$ and $F$ are locally conjugate topological graphs with non-isomorphic graph modules.

The larger question is how to characterise exactly what additional cohomological data should be paired with a local conjugacy of graphs to determine an isomorphism of graph bimodules. The question seems complicated since one cannot directly apply the standard classification theory for vector bundles---a more refined cohomology is required that takes into account the left action of $C(E^0)$ on $X(E)$, likely via the functions $\alpha_i$ appearing in Lemma~\ref{lemma.principallemmatoprovetheorem}(3).
\end{remark}

The discussion above is consistent with the results of \cite{DavRoy}. As mentioned in Section~\ref{section.local}, their theorem says that if the covering dimension of $E^0$ is at most~1, then local conjugacy of topological graphs implies isomorphism of their Hilbert modules. Our proposed strategy for constructing locally conjugate graphs with non-isomorphic Hilbert bimodules is outside the scope of that theorem: if the covering dimension of $E^0$ is at most~1, then there are no nontrival vector bundles over $E^1$---with transition functions taking values in permutation matrices or otherwise.

\end{document}